\documentclass[a4paper,11pt]{amsart}
\pdfoutput=1   %%pro arXiv
\usepackage[utf8]{inputenc}
\usepackage[english]{babel}
\usepackage{indentfirst}
\usepackage{csquotes}

\sloppy %%pro arXiv %%btw, sloppy aceita coiso local

\usepackage{graphicx}
\usepackage{multicol}
\usepackage{multirow}

\usepackage{amsmath}
\usepackage{amsfonts}
\usepackage{amssymb}
\usepackage{amsthm}
\usepackage{mathrsfs}

\usepackage[usenames,dvipsnames,svgnames,table]{xcolor}

\usepackage{tikz}
\usetikzlibrary{positioning}
\usetikzlibrary{arrows.meta}

\usepackage[style=numeric,sorting=nyt,maxbibnames=7,maxcitenames=5,block=space,backend=bibtex]{biblatex}
\addbibresource{biblio.bib}
 \makeindex

%% pra arrumar a cor pro modo matemático
\makeatletter
\renewcommand*{\@textcolor}[3]{%
  \protect\leavevmode
  \begingroup
    \color#1{#2}#3%
  \endgroup
}
\makeatother

%hyperref com enfeite. %funciona no arXiv tbm
\usepackage[bookmarks=true]{hyperref}
\definecolor{azulmaneiro}{rgb}{0, 0.125, 0.666}
\definecolor{brownmaneiro}{rgb}{0.4, 0.2, 0}
\hypersetup{colorlinks=true, linkcolor=azulmaneiro, citecolor=brownmaneiro, urlcolor=red}

\newtheorem{thm}{Theorem}[section]
\newtheorem{conj}[thm]{Conjecture}
\newtheorem{cor}[thm]{Corollary}
\newtheorem{lem}[thm]{Lemma}
\newtheorem{pps}[thm]{Proposition}

\theoremstyle{definition}

\newenvironment{rmk}
  {\pushQED{\qed}\rmkx}
  {\popQED\endrmkx}

\newenvironment{exm}
  {\pushQED{\qed}\examplex}
  {\popQED\endexamplex}
	
	\numberwithin{equation}{section}

%%% Numbers and rings
\newcommand{\N}{\mathbb{N}}
\newcommand{\Z}{\mathbb{Z}}
\newcommand{\Nzero}{\mathbb{Z}_{\geq 0}}
\newcommand{\Q}{\mathbb{Q}}
\newcommand{\F}{\mathbb{F}}
\newcommand{\R}{\mathbb{R}}
\newcommand{\K}{\mathbb{K}}
\newcommand{\laurent}[1]{#1[t,t^{-1}]}
\newcommand{\OS}{\mathcal{O}_S}

%%% Group schemes
\DeclareMathOperator{\GL}{GL}

\DeclareMathOperator{\SL}{SL}
\DeclareMathOperator{\Sp}{Sp}

\newcommand{\Mult}{\mathbb{G}_m}
\newcommand{\Addi}{\mathbb{G}_a}
\newcommand{\uCD}{\mathcal{G}_\Phi^{{\rm sc}}}

\newcommand{\Bzero}{\mathbf{B}_2^{\circ}}
\newcommand{\BzeroR}{\mathbf{B}_2^{\circ}(R)}
\newcommand{\mcP}{\mathcal{P}}
\newcommand{\mcB}{\mathcal{B}}
\newcommand{\mcG}{\mathcal{G}}
\newcommand{\mcH}{\mathcal{H}}
\newcommand{\mcU}{\mathcal{U}}
\newcommand{\mfX}{\mathfrak{X}}
\newcommand{\mbP}{\mathbf{P}}
\newcommand{\mbB}{\mathbf{B}}
\newcommand{\mbG}{\mathbf{G}}
\newcommand{\mbD}{\mathbf{D}}
\newcommand{\mbT}{\mathbf{T}}
\newcommand{\mbA}{\mathbf{A}}
\newcommand{\mbU}{\mathbf{U}}
\newcommand{\PmbB}{\mathbb{P}\mathbf{B}}
\newcommand{\Aff}{\mathbb{A}\mathrm{ff}} 

%%% Better symbols and tools
\newcommand{\phee}{\varphi}

\newcommand{\set}[1]{\left\{ #1 \right\}}
\newcommand{\spans}[1]{\langle {#1} \rangle}
\newcommand{\leer}{\varnothing}

\newcommand{\mb}{\mathbf}
\newcommand{\mc}{\mathcal}
\newcommand{\mf}{\mathfrak}

\newcommand{\scrH}{\mathscr{H}}

\newcommand{\into}{\hookrightarrow}
\newcommand{\onto}{\twoheadrightarrow}

\newcommand{\Fn}[1]{\mathrm{F}_{#1}} %finiteness properties
\newcommand{\FPn}[1]{\mathrm{FP}_{#1}}

\newcommand{\eij}{e_{ij}}
\newcommand{\Ekl}[1]{{\rm E}_{#1}}
\newcommand{\ekl}[1]{e_{#1}}

\newcommand{\doubleref}[2]{\ref{#1}(\ref{#2})}

%%% Root systems
\newcommand{\tA}{\text{A}}
\newcommand{\tB}{\text{B}}
\newcommand{\tC}{\text{C}}
\newcommand{\tD}{\text{D}}

\newcommand{\tG}{\text{G}_2}

%%% Operators
\DeclareMathOperator{\Hom}{Hom}
\DeclareMathOperator{\rk}{rk}

\DeclareMathOperator{\carac}{char}
\DeclareMathOperator{\Diag}{Diag}
\DeclareMathOperator{\Frac}{Frac}

\def\customdate{\empty}
\renewcommand{\date}[1]{\def\customdate{#1}}

\title[Finiteness lengths of soluble matrix groups]{On the finiteness length of some soluble linear groups}

\author[Y. Santos Rego]{Yuri Santos Rego}
\address{Universit\"at Bielefeld, \newline 
Fakult{\"a}t f{\"u}r Mathematik, \newline 
Postfach 100131, \newline 
D-33501 Deutschland \newline}
\curraddr{Otto-von-Guericke-Universit\"at Magdeburg, \newline 
Fakult\"at f\"ur Mathematik -- Institut f\"ur Algebra und Geometrie, \newline 
Postfach 4120, 39016 Magdeburg, Deutschland \newline}
\email{yuri.santos@ovgu.de}
\thanks{The work was supported by the Deutscher Akademischer Austauschdienst (\emph{F\"order-ID} 57129429) and the Bielefelder Nachwuchsfonds.}
%\date{\vspace{-5ex}}

\begin{document}

\thispagestyle{empty}

\begin{abstract}
Given a commutative unital ring $R$, we show that the finiteness length of a group $G$ is bounded above by the finiteness length of the Borel subgroup of rank one $\mathbf{B}_2^\circ(R)=\left( \begin{smallmatrix} * & * \\ 0 & * \end{smallmatrix} \right)\leq\SL_2(R)$ whenever $G$ admits certain $R$-representations with metabelian image. Combined with results due to Bestvina--Eskin--Wortman and Gandini, this gives a new proof of (a generalization of) Bux's equality on the finiteness length of $S$-arithmetic Borel groups. 
We also give an alternative proof of an unpublished theorem due to Strebel, characterizing finite presentability of Abels' groups $\mathbf{A}_n(R) \leq \GL_n(R)$ in terms of $n$ and $\mathbf{B}_2^\circ(R)$. This generalizes earlier results due to Remeslennikov, Holz, Lyul'ko, Cornulier--Tessera, and points out to a conjecture about the finiteness length of such groups.
\end{abstract}

\maketitle

\vspace{-10pt}

\section{Introduction}

The finiteness length $\phi(G)$ of a group $G$ is the supremum of the $n \in \Nzero$ for which $G$ admits an Eilenberg--MacLane space $K(G,1)$ with compact $n$ skeleton. The finiteness length is a quasi-isometry invariant~\cite{AlonsoFPnQI} which can be interpreted as a tool measuring `how finite' $G$ is, from the topological point of view. For instance, if $G$ is either finite or of homotopical type $\mathrm{F}$---the latter meaning that $G$ admits a compact $K(G,1)$---then $\phi(G) = \infty$. Also, $G$ is finitely presented if and only if $\phi(G) \geq 2$; see e.g.~\cite{Ratcliffe}.

Throughout, $R$ always denotes a commutative ring with unity. By a \emph{classical group} we mean an affine $\Z$-group scheme $\mcG$ which is either $\GL_n$ for some $n \geq 2$ or a universal Chevalley--Demazure group scheme $\uCD$, such as $\mcG_{\tA_{n-1}}^{\rm sc} = \SL_n$ or $\mcG_{\tC_{n}}^{\rm sc} = {\rm Sp}_{2n}$; refer e.g. to~\cite{VavilovPlotkin} for an extensive survey on Chevalley--Demazure groups over arbitrary rings. In this work we are interested in the groups of $R$-points of certain soluble subgroups of $\mcG$. An important role will be played by the Borel subgroup of rank one,
\[
 \BzeroR = \left( \begin{smallmatrix} * & * \\ 0 & * \end{smallmatrix} \right) \leq \SL_2(R).
\]
We shall also consider the \emph{affine groups} $\Aff \cong \left( \begin{smallmatrix} * & * \\ 0 & 1 \end{smallmatrix} \right) \leq \GL_2$ and $\Aff_- \cong \left( \begin{smallmatrix} 1 & * \\ 0 & * \end{smallmatrix} \right) \leq \GL_2$.

The purpose of this paper is twofold. First, using algebraic methods, we give an upper bound on the finiteness length of groups with certain soluble $R$-representations, including many parabolic subgroups of classical groups. Second, we record the current state of knowledge on the finiteness lengths of H.~Abels' soluble matrix groups $\set{\mbA_n(R)}_{n \geq 2}$. This includes a different, topological proof of an unpublished result of R.~Strebel, who classified which $\mbA_n(R)$ are finitely presented. Our main results are the following.

\begin{thm} \label{apendice}
Let $\mcG$ be either classical or one of the affine groups $\Aff$, $\Aff_-$. Let also $\mfX$ and $\mcH$ denote, respectively, a unipotent root subgroup and a maximal torus of $\mcG$. (In the affine cases, $\mfX \rtimes \mcH$ is just the whole group scheme.) Suppose a group $G$ admits a representation $\rho : G \to \mcG(R)$ whose image is $\mathrm{Im(\rho)} = \mfX(R) \rtimes \mcH(R)$ and such that the sequence $\ker(\rho) \into G \onto \mathrm{Im}(\rho)$ splits. Then $\phi(G) \leq \phi(\BzeroR)$.
\end{thm}

\begin{thm} \label{Strebel's}
 If $R$ is \emph{not} finitely generated as a ring, then $\phi(\mbA_n(R)) = 0$ for all $n \geq 2$.  Otherwise, the following hold.
 \begin{enumerate}
  \item \label{facil1} 
  $\phi(\mbA_2(R)) > 0$ if and only if $R$ is finitely generated \emph{as a} $\Z$\emph{-module}, in which case $\phi(\mbA_n(R)) = \phi(\BzeroR) = \infty$ for all $n \geq 2$.
    \item \label{facil2} 
    If $R$ is \emph{infinitely generated} as a $\Z$-module, then $\phi(\mbA_3(R)) =~\min\set{1, \phi(\BzeroR)}$.
    \item \label{nao-facil} 
  If $n \geq 4$ and $R$ is \emph{infinitely generated} as a $\Z$-module, then $\phi(\mbA_n(R))\leq\phi(\BzeroR)$ and, given $\ell \in \set{1,2}$, one has $\phi(\mbA_n(R)) \geq \ell$ whenever $\phi(\BzeroR) \geq \ell$.  
   \end{enumerate}
\end{thm}

This article is organized as follows. In sections~\ref{almostretracts} and~\ref{Abels'gps} below we motivate the main theorems~\ref{apendice} and~\ref{Strebel's}, respectively. Section~\ref{almostretracts} includes a new proof of K.-U.~Bux's main result in~\cite{Bux04}, and in section~\ref{Abels'gps} we state a conjecture about the finiteness lengths of Abels' groups. We recall in section~\ref{prelim} some standard facts to be used throughout. Theorems~\ref{apendice} and~\ref{Strebel's} are proved in sections~\ref{capum} and~\ref{capdois}, respectively. 

\subsection{Theorem~\ref{apendice} -- motivation and examples} \label{almostretracts} The problem of determining the finiteness length of an $S$-arithmetic group is an ongoing challenge; see, for instance, the introductions of \cite{Bux04,BuxWortman07,BKWRank} for an overview. 

Suppose $\mcB$ is a Borel subgroup of a Chevalley--Demazure group and let $\OS$ be a Dedekind ring of arithmetic type and positive characteristic, such as $\F_q[t]$ or $\F_q[t,t^{-1},(t-1)^{-1}]$. In~\cite{Bux04}, K.-U.~Bux showed that the $S$-arithmetic group $\mcB(\OS)$ satisfies the equality $\phi(\mcB(\OS)) = \vert S\vert - 1$. The first step of his geometric proof~\cite[Theorem~5.1]{Bux04} yields the upper bound $\phi(\mcB(\OS)) \leq \vert S\vert - 1$. 
The number $\vert S\vert - 1$, in turn, had already been observed to equal $\phi(\Bzero(\OS))$~\cite[Corollary~3.5]{Bux04}. Our inspiration for Theorem~\ref{apendice} was to give a simple algebraic explanation for the inequality $\phi(\mcB(\OS)) \leq \phi(\Bzero(\OS))$ since this would likely extend to larger classes of rings. And this is in fact the case; see Section~\ref{capum} for the proof of Theorem~\ref{apendice}. We also expect Theorem~\ref{apendice} to have appropriate analoga in the context of Kac--Moody groups. 

In the arithmetic set-up, we combine Theorem~\ref{apendice} with results due to M.~Bestvina, A.~Eskin and K.~Wortman \cite{BEW} and G.~Gandini~\cite{Gandini} to obtain the following proof of (a generalization of) Bux's equality.

\begin{thm}
 \label{Bux's}
 Let $\mbP$ be a proper parabolic subgroup of a non-commutative, connected, reductive, split linear algebraic group $\mbG$ defined over a global field $\K$. Denote by $\mbU_\mbP$ the unipotent radical of $\mbP$ and by $\mb{T}_\mbP$ a maximal torus of $\mbG$ contained in $\mbP$. For any $S$-arithmetic subgroup $\Gamma \leq \mbU_\mbP \rtimes \mb{T}_\mbP$, the following inequalities hold.
 \[
  \vert S\vert - 1 \leq \phi(\Gamma) \leq \phi(\Bzero(\OS)).
 \]
Moreover, if $\K$ has positive characteristic, then $\phi(\Gamma) = \vert S\vert - 1$.
\end{thm}

\begin{proof}
 Standard arguments allow us to assume, without loss of generality, that $\mbG$ is classical; see e.g. the steps in~\cite[Section~2.6(c)]{Behr98} and pass over to parabolic subgroups---notice that Satz~1 cited by Behr holds regardless of characteristic. Since $S$-arithmetic subgroups of a given linear algebraic group are commensurable, we may further restrict ourselves to the (now well-defined) group of $\OS$-points $\mbU_\mbP(\OS) \rtimes \mbT_\mbP(\OS) \leq \mbG(\OS)$. In the set-up above, the first inequality follows from~\cite[Theorem~6]{BEW} and Lemma~\ref{obviousboundsonphi} since $\Gamma$ is an extension of $\mbU_\mbP(\OS) \rtimes \mb{A}_\mbP(\OS)$ by a finitely generated abelian group (due to Dirichlet's unit theorem), where $\mb{A}_\mbP$ is the maximal torus in the center of the Levi factor of $\mbP$. The second inequality is a direct consequence of our Theorem~\ref{apendice}.

For the last claim, suppose $\carac(\K)> 0$. Since $\Bzero(\OS) \supseteq \left( \begin{smallmatrix} 1 & * \\ 0 & 1 \end{smallmatrix} \right) \cong (\OS,+)$, one has that $\Bzero(\OS)$ has no bounds on the orders of its finite subgroups because $\OS$ contains infinite dimensional vector spaces over the prime field of $\K$~\cite[Section~23]{O'Meara}. But $\Bzero(\OS)$ acts by cell-permuting homeomorphisms on the product of $\vert S \vert$ Bruhat--Tits trees, each such tree being associated to the locally compact group $\SL_2(\Frac(\OS)_v)$ attached to the place $v \in S$; cf.~\cite{SerreTrees}. Since the stabilizers of this action are finite~\cite[Section~3.3]{BruhatTits}, it follows that $\Bzero(\OS)$ belongs to P.~Kropholler's $\mb{H}\mf{F}$ class and thus Gandini's theorem~\cite{Gandini} applies, yielding $\phi(\Bzero(\OS)) \leq \vert S \vert - 1$.
\end{proof}

An alternative proof of the inequality $\phi(\Gamma) \leq \vert S\vert -1$ (positive characteristic) was announced by K.~Wortman~\cite[p.~2169]{BEW}. While giving a stronger version of Bux's result, Theorem~\ref{Bux's} furnishes further examples of non-metabelian soluble linear groups with prescribed finiteness properties. To the best of our knowledge, the only known cases were Abels--Witzel's groups (see Section~\ref{ExemploConjectura} and~\cite{StefanAbels}) and Bux's own examples~\cite{Bux04}. 

It is natural to wonder which properties of $S$-arithmetic subgroups, or of associated symmetric spaces and buildings, are intrinsic to the underlying algebraic group and thus independent of the field of definition. (Compare e.g. cohomological finiteness properties~\cite{BoSe,BKWRank} and distortion dimension~\cite{BuxWortman07,BEW} in the semi-simple case.) For our groups, although the work of Tiemeyer~\cite[Theorem~4.3 and (the proof of)~Theorem~4.4]{Tiemeyer} implies the stronger result $\phi(\Gamma) = \infty$ when $\K$ has characteristic zero, Theorem~\ref{Bux's} is advantageous in the sense that its inequalities are \emph{uniform}, i.e. independent of characteristic.  
 
Examples of non-soluble groups that fit into the framework of Theorems~\ref{apendice} and~\ref{Bux's} arise, for instance, from parabolic subgroups of classical groups. 
 
 \begin{exm} \label{oexemplo}
   Recall that the (standard) parabolic subgroups of $\SL_n$ over a field $\K$ are the subgroups of block (upper or lower) triangular matrices. Pictorially, a parabolic $\mcP \leq \SL_n$ is e.g. of the form
  \[
\mcP \cong
  \begin{pmatrix}
 \mb{n_1 \times n_1} & \multicolumn{1}{|c}{*} & \cdots & * \\ \cline{1-2}
 0 & \multicolumn{1}{|c|}{\mb{n_2 \times n_2}} & \ddots & \vdots \\ \cline{2-3}
 \vdots & \ddots & \multicolumn{1}{|c|}{\ddots} & * \\ \cline{3-4}
 0 & \cdots & \multicolumn{1}{c|}{0} & \mb{n_k \times n_k}
\end{pmatrix},
\]
where $k \leq n$ is the number of blocks. In particular, $\mcP$ contains the diagonal subgroup---the standard maximal torus---of $\SL_n$, and each $\mb{n_i \times n_i}$ block on the diagonal with $n_i > 1$ is isomorphic to $\SL_{n_i}$. Now suppose $\K$ is a global function field, i.e. a finite extension of $\F_p(t)$ for some prime $p$. If there exists an index $j < k$ for which $n_j = 1 = n_{j+1}$, then Theorem~\ref{Bux's} yields $\phi(\Gamma) < \vert S\vert$ for any $S$-arithmetic subgroup $\Gamma \leq \mcP$.
 \end{exm}
 
Computing the finiteness lengths of $S$-arithmetic parabolics in positive characteristic is an open problem. Example~\ref{oexemplo} provides a new result in this direction. 
 
 Going back to arbitrary base rings, we stress that results on the finiteness length of \emph{non}-$S$-arithmetic discrete linear groups are scarce. The most prominent examples were obtained by Bux--Mohammadi--Wortman~\cite{BuxMohammadiWortman} and Gandini~\cite[Corollary~4.1]{Gandini} using Bruhat--Tits buildings, and by Kropholler--Mullaney~\cite{KrophollerMullaney} building upon works of \AA{}berg and Groves--Kochloukova.
 
As shown, the class of groups to which Theorem~\ref{apendice} applies is quite large. One can notably summarize many such examples via the so-called groups of type (R). These were studied by M.~Demazure and A.~Grothendieck in the 1960s and generalize parabolic subgroups of reductive affine group schemes; see \cite[Expos{\'e}~XXII, Chapitre~5]{SGA3.3}.

\begin{cor} \label{type(R)}
 Let $\mbG$ be an affine group scheme defined over $\Z$ and let $\mb{H} \leq \mcG$ be a $\Z$-subgroup, of type \emph{(R)} with soluble geometric fibers, of a classical group $\mcG$. If there exists a $\Z$-retract $r : \mbG \to \mb{H}$, then $\phi(\mbG(R)) \leq \phi(\BzeroR)$ for every commutative ring $R$ with unity.
\end{cor}

\begin{proof}
 This follows immediately from Theorem~\ref{apendice} and~\cite[Expos{\'e}~XXII, Proposition~5.6.1 and Corollaire~5.6.5]{SGA3.3}.
\end{proof}

The following gives a concrete series of finitely generated, non-amenable, non-arithmetic groups with prescribed upper bounds on the finiteness length. Other such examples include (Stallings--Bieri--)Bestvina--Brady's groups and certain members of the family of generalized Thompson groups; cf.~\cite{RachelStefanMatt} and references therein for an overview.

 \begin{exm} \label{outroexemplo}
Similarly to Example~\ref{oexemplo}, let $\mcP$ be a parabolic $\Z$-subscheme of $\SL_n$ with $k$ diagonal blocks of sizes $n_j$, with at least one block of size at least two and with at least one index $j < k$ for which $n_j = 1 = n_{j+1}$. For instance, we can take $\mcP$ to be
\[ \left( \begin{smallmatrix} * & * & * & * \\ * & * & * & * \\ 0 & 0 & * & * \\ 0 & 0 & 0 & * \end{smallmatrix} \right) \leq \SL_4 \, \mbox{ or } \, \left( \begin{smallmatrix} * & * & * & * & * & * & * \\ * & * & * & * & * & * & * \\ * & * & * & * & * & * & * \\ 0 & 0 & 0 & * & * & * & * \\ 0 & 0 & 0 & 0 & * & * & * \\ 0 & 0 & 0 & 0 & 0 & * & * \\ 0 & 0 & 0 & 0 & 0 & * & * \end{smallmatrix} \right) \leq \SL_7. \]
 Following Kropholler--Mullaney~\cite{KrophollerMullaney}, fix $\ell \in \N$ and let $R_\ell$ be the integral domain \[R_\ell = \Z[x,x^{-1},(1+x)^{-1},\ldots,(\ell+x)^{-1},\frac{1}{\ell!}] \subset \Q(x).\] 
The groups $\mcP(R_\ell)$ are non-amenable because they contain $\SL_2(\Z)$, and it is an exercise (e.g. using the relations from section~\ref{prelim}) to check that they are finitely generated. Corollary~\ref{type(R)} gives $\phi(\mcP(R_\ell)) < \ell + 2$ since $\phi(\Bzero(R_\ell)) = \ell + 1$ by~\cite{Desi0,KrophollerMullaney}.
 \end{exm}

We remark that Theorem~\ref{apendice} admits a slight geometric modification by weakening the hypothesis on the map $G \to \mf{X}(R) \rtimes \mcH(R)$ at the cost of an extra assumption on the base ring $R$; see Section~\ref{RporQR} for details.

\subsection{Theorem~\ref{Strebel's} -- the group schemes of Herbert Abels} \label{Abels'gps} For every natural number $n \geq 2$, consider the following $\Z$-group scheme.
\[
 \mbA_n :=
 \left( \begin{smallmatrix}
  1 & * & \cdots & \cdots & * \\
  0 & * & \ddots & & \vdots \\
  \vdots & \ddots & \ddots & \ddots & \vdots \\
  0 & \cdots & 0 & * & * \\
  0 & \cdots & \cdots & 0 & 1
 \end{smallmatrix} \right)
 \leq \GL_n.
\]

Interest in the infinite family $\set{\mbA_n}_{n \geq 2}$, nowadays known as Abels' groups, was sparked in the late 1970s when Herbert Abels~\cite{Abels0} published a proof of finite presentability of the group $\mbA_4(\Z[1/p])$, where $p$ is any prime number; see also~\cite[Sections~0.2.7 and~0.2.14]{Abels}.  Abels' groups emerged as counterexamples to long-standing problems in group theory---see, for instance, \cite[Proposition~A5]{BenGriHar}---and later became a source to construct groups with peculiar properties; cf.~\cite{dC-Guyot-PitschIsolated,Carrion-Dadarlat-Eckhardt,Bi-dC-G-S,BeLuTho} for recent examples. 

Regarding their finiteness lengths, not long after Abels announced that $\phi(\mbA_4(\Z[1/p])) \geq 2$, Ralph Strebel went on to generalize this in the handwritten notes~\cite{StrebelAbels}, which never got to be published and only came to our attention after Theorem~\ref{Strebel's} was established. He actually gives necessary and sufficient conditions for Abels--Strebel's groups, defined by
\[
A_n(R,Q) = \{g \in \mbA_n(R) \mid g \mbox{ has diagonal entries in } Q \leq R^\times\},
\]
to have $\phi(A_n(R,Q)) \geq 2$. (Note that $A_n(R,R^\times) = \mbA_n(R)$.)  
It had been announced by Remeslennikov~\cite[p.~210]{Abels0} that $\mbA_4(\Z[x,x^{-1},(x+1)^{-1}])$ admits a finite presentation---a similar example is treated in detail in Strebel's manuscript. 
Shortly thereafter, Bieri~\cite{BieriZHomology} observed that $\phi(\mbA_n(\Z[1/p])) \leq n-2$.

In the mid 1980s, S.~Holz and A.~N.~Lyul'ko proved independently that $\mbA_n(\Z[1/p])$ and $\mbA_n(\Z[1/m])$, respectively, are finitely presented as well---for all $n, m, p \in \N$ with $n \geq 4$ and $p$ prime~\cite[Anhang]{Holz},~\cite{Lyulko}. Their techniques differ from Strebel's in that they consider large subgroups of $\mbA_n$ and relations among them to check for finite presentability of the overgroup. Holz~\cite{Holz} 
pushed the theory further by giving the first example of a soluble, non-metabelian group of finiteness length exactly three, namely $\mbA_5(\Z[1/p])$. 
Abels--Brown~\cite{AbelsBrown} later showed that $\phi(\mbA_n(\Z[1/p])) \geq n-2$. (This actually holds in greater generality; see Section~\ref{ExemploConjectura}.) In \cite{StefanAbels}, S.~Witzel generalizes the family $\set{\mbA_n}_{n\geq 2}$ and proves that such groups over $\Z[1/p]$ for $p$ an odd prime have, in addition, varying Bredon finiteness properties.

Besides the above examples in characteristic zero and Strebel's manuscript, the only published case of a finitely presented Abels' group over a torsion ring is also $S$-arithmetic. In~\cite{dC-T}, Y.~de~Cornulier and R.~Tessera prove, among other things, that $\phi(\mbA_4(\F_p[t,t^{-1},(t-1)^{-1}])) \geq 2$. They remark that $\phi(\mbA_4(\F_p[t])) = 0$ and $\phi(\mbA_4(\laurent{\F_p})) = 1$, and point out that similar results, including the one on finite presentability over $\F_p[t,t^{-1},(t-1)^{-1}]$, should hold for $n \geq 4$ with mechanical changes~\cite[Remark~5.5]{dC-T}.

As far as generators and relations are concerned, Theorem~\ref{Strebel's} generalizes to arbitrary rings the above mentioned results on presentations of Abels' groups. The previous discussion also indicates the following natural problem.

\begin{conj} \label{EndlichkeitseigenschaftenAbels}
 Let $R$ be a finitely generated commutative unital ring. If $R$ is \emph{infinitely generated} as a $\Z$-module, then $\phi(\mbA_n(R)) = \min\set{n-2, \phi(\BzeroR)}$  for all $n \geq 2$.
\end{conj}

For completeness we provide in Section~\ref{ExemploConjectura} a proof of Conjecture~\ref{EndlichkeitseigenschaftenAbels} in the known $S$-arithmetic cases. 

Let us briefly record a series of non-$S$-arithmetic examples of Abels' groups, again borrowing from the works of Kochloukova~\cite{Desi0} and Kropholler--Mullaney~\cite{KrophollerMullaney}.

 \begin{exm} \label{maisoutroexemplo}
Recall from Example~\ref{outroexemplo} the rings $R_\ell = \Z[x,x^{-1},(1+x)^{-1},\ldots,(\ell+x)^{-1},\frac{1}{\ell!}]$ for $\ell \in \N$. Since $\phi(\Bzero(R_\ell)) = \ell + 1$, we have that Abels' groups $\mbA_n(R_\ell)$ with $n \geq 3$ are finitely generated, and they become finitely presented if $n \geq 4$. In low dimensions one has more precisely
\[\phi(\mbA_2(R_\ell)) = 0, \, \phi(\mbA_3(R_\ell)) = 1 \, \mbox{ and } \, \phi(\mbA_4(R_\ell)) \geq 2 \, \mbox{ for all } \, \ell\in\N,\]
and the equality $\phi(\mbA_4(R_1)) = 2$ in the case $\ell = 1$.
 \end{exm}

As for the proof of Theorem~\ref{Strebel's}, our approach differs from Strebel's and both have their own advantages. His proof in~\cite{StrebelAbels} is purely algebraic and, under the needed hypotheses, he explicitly constructs a convenient finite presentation for $A_n(R,Q) = \mbU_n(R) \rtimes Q^{n-2}$. 
In contrast, the proof of Theorem~\ref{Strebel's} given here has a topological flavor. Indeed, we make use of horospherical subgroups and nerve complexes \`a la Abels--Holz~\cite{Abels0,Abels,Holz,AbelsHolz}, the early $\Sigma$-invariant for metabelian groups introduced by Bieri and Strebel himself~\cite{BieriStrebel}, and K.~S.~Brown's criterion for finite presentability~\cite{Brown0} via actions on simply-connected complexes; see Section~\ref{capdois} for details.

Although Strebel's original theorem~\cite{StrebelAbels} is slightly more general than Theorem~\ref{Strebel's} as stated, 
our proof carries over to his case as well after appropriate changes. Namely, one needs only replace the necessary conditions ``$\phi(\BzeroR) \geq 1$ (resp. $\geq 2$)'' by ``the group
\[
\left\{ \left(\begin{smallmatrix} \diamond & * \\ 0 & 1 \end{smallmatrix}\right) \in \GL_2(R) \mid * \in R, \diamond \in Q \right\}
\]
is finitely generated (resp. finitely presented).''

Further remarks about our methods and those of Strebel point to interesting phenomena concerning the structure of Abels' groups; see Section~\ref{lombra}.

\section{Preliminaries and Notation} \label{prelim}

The facts collected here are standard. The reader is referred e.g. to the classics~\cite{HahnO'Meara,SGA3.3,Steinberg} and to~\cite[Chapter~7]{GeoBook} for the results on classical groups and finiteness length, respectively, that will be used throughout. Though we derived corollaries for $S$-arithmetic groups (refer to~\cite{Margulis} for more on them), familiarity with such groups is not required for the proofs of our main results. A group commutator shall be written $[x,y] = xyx^{-1}y^{-1}$. 

Given $i,j \in \set{1,\ldots,n}$ with $i \neq j$ and $r \in R$, we denote by $\eij(r) \in \GL_n(R)$ the corresponding \emph{elementary matrix} (also called elementary transvection), i.e. $\eij(r)$ is the matrix whose diagonal entries all equal $1$ and whose only off-diagonal non-zero entry is $r$ in the position $(i,j)$.

Elementary matrices and commutators between them have the following properties, which are easily checked.
\[
\eij(r)\eij(s) = \eij(r+s), \, \, [\ekl{ij}(r),\ekl{kl}(s)^{-1}] = [\ekl{ij}(r),\ekl{kl}(s)]^{-1}, \mbox{ and}
\]
\begin{equation} \label{Ancommutators}
[\ekl{ij}(r),\ekl{kl}(s)] =
\begin{cases}
\ekl{il}(rs) & \mbox{if } j=k,\\
1 & \mbox{if } i \neq l, k \neq j.
\end{cases}
\end{equation}
In particular, each subgroup 
\[\mb{E}_{ij}(R) = \spans{\,\set{\eij(r) \mid r \in R}\,} \leq \GL_n(R)\] 
is isomorphic to the underlying additive group ${\Addi}(R) =~(R,+) \cong~\set{\left( \begin{smallmatrix} 1 & r \\ 0 & 1 \end{smallmatrix}\right) \mid r \in R}$. 

Direct matrix computations also show that 
\[
 \Diag(a_1,\ldots,a_n) \Diag(b_1,\ldots,b_n) = \Diag(a_1 b_1, \ldots, a_n b_n),
\]
where $\Diag(u_1,\ldots,u_n)$ denotes the $n \times n$ diagonal matrix whose diagonal entries are $u_1,\ldots,u_n \in R^\times$. Given $i \in \set{1,\ldots,n}$, we let $D_i(R)$ denote the subgroup $\set{\Diag(u_1,\ldots,u_n) \mid u_j = 1 \mbox{ if } j \neq i} \leq \GL_n(R)$. Write $\mbD_n(R)$ for the subgroup of $\GL_n(R)$ generated by all $\Diag(u_1, \ldots, u_n)$. One then has \[\mbD_n(R) = \prod\limits_{i=1}^n D_i(R) \cong \Mult(R)^n,\] where ${\Mult}(R) = (R^\times, \cdot) \cong \GL_1(R)$ is the group of units of the base ring $R$. The matrix group scheme $\mbD_n \cong \Mult^n$, which is defined over $\Z$, is also known as the standard (maximal) torus of $\GL_n$. 
The following relations between diagonal and elementary matrices are easily verified. 
\begin{equation} \label{SteinbergRelGLn}
\Diag(u_1,\ldots,u_n) \eij(r) \Diag(u_1,\ldots,u_n)^{-1} = \eij(u_i u_j^{-1} r).
\end{equation}

Relations similar to the above hold for all classical groups. Recall that a universal Chevalley--Demazure group scheme of type $\Phi$ is the split, semi-simple, simply-connected, affine $\Z$-group scheme $\uCD$ associated to the (reduced, irreducible) root system $\Phi$. It is a result of Chevalley's that such group schemes exist, and they are unique by Demazure's theorem; refer to~\cite{SGA3.3} for existence, uniqueness, and structure theory of Chevalley--Demazure groups. Our notation below for root subgroups of $\uCD$ closely follows that of Steinberg~\cite{Steinberg}.

For every root $\alpha \in \Phi$ and ring element $r \in R$, the group $\uCD(R)$ contains a corresponding unipotent root element $x_\alpha(r) \in \uCD(R)$---these play in $\uCD(R)$ the same role as the elementary matrices $\eij(r)$ in $\GL_n(R)$. Accordingly, one has the unipotent root subgroup \[\mf{X}_\alpha(R) = \spans{\,\set{x_\alpha(r) \mid r \in R}\,} \cong \Addi(R).\] For each $\alpha \in \Phi$ and $u \in R^\times$, one also has a semi-simple root element $h_\alpha(u) \in \uCD(R)$, and we define $\mcH_\alpha(R) = \spans{\set{h_\alpha(u) \mid u \in R^\times}} \cong \Mult(R)$, the semi-simple root subgroup associated to $\alpha$. The subgroups $\mcH_\alpha(R)$ and $\mcH_\beta(R)$ commute for all roots $\alpha, \beta \in \Phi$. The (standard) torus of $\uCD(R)$ is the abelian subgroup \[\mcH(R) = \spans{\mcH_\alpha(R) \, : \, \alpha \in \Phi} \cong \Mult(R)^{\rk(\Phi)}.\]

The unipotent root subgroups in Chevalley--Demazure groups are related via Chevalley's famous commutator formulae, which generalize the commutator relations~\eqref{Ancommutators} between elementary matrices; see e.g. \cite{Steinberg,Carter,VavilovPlotkin}. As for relations between unipotent and semi-simple root elements, Steinberg derives from Chevalley's formulae a series of equations now known as Steinberg relations; cf.~\cite[p.~23]{Steinberg}. In particular, given $h_\beta(u) \in \mc{H}_\beta(R)$ and $x_\alpha(r) \in \mf{X}_\alpha(R)$, Steinberg shows 
\begin{equation} \label{SteinbergRel} 
h_\beta(u) x_\alpha(r) h_\beta(u)^{-1} = x_\alpha(u^{(\alpha, \beta)}r),
\end{equation}
where $(\alpha,\beta) := 2\frac{\spans{\alpha,\beta}}{\spans{\beta,\beta}} \in \set{0, \pm 1, \pm 2, \pm 3}$ is the corresponding Cartan integer. 

Let $W$ be the Weyl group associated to $\Phi$. The Steinberg relations~\eqref{SteinbergRel} behave well with respect to the $W$-action on the roots $\Phi$. More precisely, let $\alpha \in \Phi \subseteq \R^{\rk(\Phi)}$ and let $r_\alpha \in W$ be the associated reflection. The group $W$ has a canonical copy (up to ordering of roots) in $\uCD$ obtained via the assignment 
\[
 r_\alpha \mapsto w_\alpha := x_\alpha(1) x_{-\alpha}(1)^{-1} x_\alpha (1).
\]
With the above notation, given arbitrary roots $\beta, \gamma \in \Phi$, one has 
\begin{align} \label{SteinbergReflection} 
 \begin{split}
 h_{r_\alpha(\gamma)}(v) x_{r_\alpha(\beta)}(s) h_{r_\alpha(\gamma)}(v)^{-1} & = w_\alpha (h_\gamma(v) x_\beta(s)^{\pm 1} h_\gamma(v)^{-1}) w_\alpha^{-1} \\
 & = x_{r_\alpha(\beta)}(v^{(\beta, \gamma)}s)^{\pm 1},
 \end{split}
 \end{align}
where the sign $\pm 1$ above does not depend on $v \in R^\times$ nor on $s \in R$.

Throughout this paper we shall make repeated use of the following well-known bounds on the finiteness length.

\begin{lem} \label{obviousboundsonphi}
Given a short exact sequence $N \into G \onto Q$, 
the following hold.
\begin{enumerate}
\item \label{obphi0} If $\phi(N)$ and $\phi(Q)$ are at least $n$, then so is $\phi(G)$.
\item \label{obphi1} If $\phi(Q) = \infty$, then $\phi(N) \leq \phi(G)$.
\item \label{obphi3} If $\phi(N) = \infty$, then $\phi(Q) = \phi(G)$. 
\item \label{obphi2} If the sequence splits (equivalently, $G = N \rtimes Q$), then $\phi(G) \leq \phi(Q)$.
\item \label{obphi4} If the sequence splits trivially, i.e. $G = N \times Q$, then $\phi(G) =~\min\set{\phi(N), \phi(Q)}$.
\end{enumerate}
\end{lem}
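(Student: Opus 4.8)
The plan is to derive all five items from three standard inheritance properties of the homotopical finiteness condition ``type~$\Fn{n}$'', which I would take as black boxes (see \cite[Chapter~7]{GeoBook}): \textup{(EXT)} in a short exact sequence $N\into G\onto Q$, if $N$ and $Q$ are of type $\Fn{n}$, then so is $G$; \textup{(QUOT)} in the same situation, if $G$ is of type $\Fn{n}$ and $N$ is of type $\Fn{n-1}$, then $Q$ is of type $\Fn{n}$; and \textup{(RETR)} a retract of a group of type $\Fn{n}$ is again of type $\Fn{n}$. Since $\phi(H)=\sup\set{n\in\Nzero \mid H\text{ is of type }\Fn{n}}$, each of these turns into an inequality between finiteness lengths once its hypotheses are checked for every finite $n$ below the relevant bound, the value $\infty$ being read as ``of type $\Fn{n}$ for all $n$''.

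Granting this, item~(i) is merely a restatement of (EXT). For item~(ii) I would observe that if $\phi(Q)=\infty$, then for every finite $n\le\phi(N)$ both $N$ and $Q$ are of type $\Fn{n}$, so (EXT) makes $G$ of type $\Fn{n}$ and hence $\phi(N)\le\phi(G)$. For item~(iii), assume $\phi(N)=\infty$: the inequality $\phi(Q)\le\phi(G)$ follows just as in item~(ii), now with $N$ in the role of the group of type $\Fn{n}$ for all $n$, while for the reverse inequality I would invoke (QUOT): for each finite $n\le\phi(G)$ one has $G$ of type $\Fn{n}$ and $N$ of type $\Fn{n-1}$, hence $Q$ is of type $\Fn{n}$, so $\phi(G)\le\phi(Q)$. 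Together these give $\phi(Q)=\phi(G)$.

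For item~(iv), if the sequence splits then a section $s\colon Q\to G$ of the quotient map $\pi\colon G\onto Q$ exhibits $Q$ as a retract of $G$, since $\pi\circ s=\mathrm{id}_Q$; so (RETR) shows that $Q$ inherits type $\Fn{n}$ from $G$, that is, $\phi(G)\le\phi(Q)$. Item~(v) then follows by applying item~(iv) to the two split short exact sequences $N\into N\times Q\onto Q$ and $Q\into N\times Q\onto N$, which yields $\phi(N\times Q)\le\min\set{\phi(N),\phi(Q)}$; the opposite inequality is (EXT) applied with $n=\min\set{\phi(N),\phi(Q)}$.

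The step I expect to require the most care is simply pinning down the references for the three black boxes, and in particular (RETR): one should make sure to cite it for the \emph{homotopical} condition $\Fn{n}$ and not merely for the homological $\FPn{n}$ or for finite presentability. For $n\le 2$ this is elementary in any case: if $\set{x_1,\dots,x_k}$ generates $G$, then $\ker\pi$ coincides with the normal closure in $G$ of the finitely many elements $x_i\,s(\pi(x_i))^{-1}$, so $Q$ is obtained from any finite presentation of $G$ by adjoining these as relators. Everything else is routine bookkeeping with the supremum convention and with the degenerate values $0$ and $\infty$.
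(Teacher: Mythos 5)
Your proposal is correct and follows essentially the same route as the paper: the paper likewise reduces all five items to the standard inheritance facts for type $\Fn{n}$ (citing Ratcliffe's Theorems~4(i), 4(ii) and~6, which are precisely your (QUOT), (EXT) and (RETR)), deduces~(\ref{obphi1}) from~(\ref{obphi0}), and obtains~(\ref{obphi4}) from~(\ref{obphi2}) and~(\ref{obphi0}). The bookkeeping with the supremum and the degenerate values is handled correctly.
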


\begin{proof}
 Part~\eqref{obphi0} is just \cite[Theorem~4(ii)]{Ratcliffe} restated in the language of finiteness length, and~\eqref{obphi1} follows immediately from~\eqref{obphi0}. Parts~\eqref{obphi3} and~\eqref{obphi2} follow from~\cite[Theorems~4(i) and~6]{Ratcliffe}. Part~\eqref{obphi4} is an immediate consequence of~\eqref{obphi2} and~\eqref{obphi0}.
\end{proof}

We point out that the finiteness length is often seen in the literature in the language of {homotopical finiteness properties}. The concept goes back to the work of C.~T.~C. Wall in the 1950s. By definition, a group $G$ is of homotopical type $\Fn{n}$ if $\phi(G) \geq n$, and of type $\Fn{\infty}$ in case $\phi(G) = \infty$; see e.g. \cite[Chapter~7]{GeoBook} for more on this. A well-known fact that will be often used throughout this work is that a finitely generated abelian group $Q$ has $\phi(Q) = \infty$, i.e. it is of type $\Fn{\infty}$ (equivalently, of type $\Fn{n}$ for all $n \in \Nzero$). This is readily seen because the torsion-free part of $Q$, say $\Z^m$ with $m \geq 0$, admits the $m$-Torus as an Eilenberg--Maclane space $K(\Z^m,1)$.

\section{Proof of Theorem~\ref{apendice}} \label{capum}

The hypotheses of Theorem~\ref{apendice} already yield an obvious bound on the finiteness length of the given group. Indeed, in the notation of Theorem~\ref{apendice}, we have that $\phi(G) \leq \phi(\mfX(R) \rtimes \mcH(R))$ by Lemma~\doubleref{obviousboundsonphi}{obphi2} because retracts preserve homotopical finiteness properties. (A group $Q$ is called a \emph{retract} of $G$ if $G$ is a semi-direct product $G \cong N \rtimes Q$ for some $N \triangleleft G$. Equivalently, the sequence $N \into G \onto Q$ splits.) The actual work thus consists in proving that $\phi(\mfX(R) \rtimes \mcH(R))$ is no greater than the desired value, $\phi(\BzeroR)$.

We begin with the following simple observation. 

\begin{rmk}
If the group of units $\Mult(R) = (R^\times, \cdot)$ is \emph{not} finitely generated, then Theorem~\ref{apendice} holds. Indeed, in this case one has 
 \[
  0 \leq \phi(G) \leq \phi(\mfX(R) \rtimes \mcH(R)) \leq \phi(\mcH(R)) \leq \phi(\Mult(R)) = 0, \mbox{ and }
 \]
 \[
  0 \leq \phi(\BzeroR) \leq \phi(\set{\left( \begin{smallmatrix} u & 0 \\ 0 & u^{-1} \end{smallmatrix} \right) \mid u \in R^\times}) = \phi(\Mult(R)) = 0,
 \]
 because both the torus $\mcH(R)$ and $\BzeroR$ retract onto $\Mult(R)$; cf. Section~\ref{prelim}.
\end{rmk}

In view of the above, we adopt the following.\\ 
\textbf{\emph{Convention throughout this section:}} Unless stated otherwise, we {assume that the group of units} $\Mult(R) = (R^\times,\cdot)$ {of the base ring} $R$ {is finitely generated}. \\

 In what follows, we denote by $\mbB_n(R)$ the subgroup of upper triangular matrices of $\GL_n(R)$. Similarly, we define $\mbB_n^\circ(R) = \mbB_n(R) \cap \SL_n(R)$. The schemes $\mbB_n$ and $\mbB_n^\circ$ are examples of Borel subgroups of classical groups. 

We start by clearing the case where our representation $\rho : G \to \mcG(R)$ has one of the affine groups as target. 
Recall that the \emph{group of affine transformations} of the base ring $R$, denoted here by $\Aff(R)$ and sometimes by $\mathrm{AGL}_1(R)$ in the literature, is the set of affine permutations $R \to R : x \mapsto ux+r$, with $u \in R^\times$ and $r \in R$. It is a standard exercise to check that $\Aff(R)$ is isomorphic to the semi-direct product $\Addi(R) \rtimes \Mult(R)$, with $\Mult(R)$ acting on $\Addi(R)$ by multiplication. 

Identifying $x \in R$ with $\left( \begin{smallmatrix} x \\ 1 \end{smallmatrix} \right)$, we readily obtain the well-known matrix representation of $\Aff(R)$ as a subgroup of $\GL_2(R)$, namely  
\[ \Aff(R) \cong \left( \begin{smallmatrix} * & * \\ 0 & 1 \end{smallmatrix} \right) \leq \GL_2(R),\]
now acting on $R \cong \{ \left( \begin{smallmatrix} * \\ 1 \end{smallmatrix} \right) \}$ via matrix multiplication. Using the relations from Section~\ref{prelim} and because the diagonal and unipotent parts of $\left( \begin{smallmatrix} * & * \\ 0 & 1 \end{smallmatrix} \right)$ intersect at the identity matrix, the above representation of $\Aff(R)$ also yields its semi-direct product decomposition 
\[ \left( \begin{smallmatrix} * & * \\ 0 & 1 \end{smallmatrix} \right) \cong \left( \begin{smallmatrix} 1 & * \\ 0 & 1 \end{smallmatrix} \right) \rtimes \left( \begin{smallmatrix} * & 0 \\ 0 & 1 \end{smallmatrix} \right) \cong \Addi(R) \rtimes \Mult(R). \]
In certain situations, it is also convenient to consider a version of $\Aff(R)$, denoted here by $\Aff_-(R)$, whose multiplicative part $\Mult(R)$ acts on $\Addi(R)$ by reverse multiplication. That is, we consider $\Aff_-(R) \cong \Addi(R) \rtimes_{-1} \Mult(R)$ with action 
\[\Addi(R) \times \Mult(R) \ni (r,u) \mapsto u^{-1}r.\] 
The group $\Aff_-(R)$ also has an obvious matrix representation, namely
\[ \Aff_-(R) \cong \left( \begin{smallmatrix} 1 & * \\ 0 & * \end{smallmatrix} \right) \leq \GL_2(R).\]

\begin{lem} \label{newlemma}
The groups $\Aff(R)$, $\BzeroR$ and $\Aff_-(R)$ are commensurable. In particular, $\phi(\Aff(R)) = \phi(\BzeroR) = \phi(\Aff_-(R))$. Thus Theorem~\ref{apendice} holds for $\mcG = \Aff$ or $\Aff_-$.
\end{lem}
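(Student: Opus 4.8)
The plan is to exhibit explicit finite-index subgroups of $\Aff(R)$ and $\Aff_-(R)$ inside $\BzeroR$ (or conversely), so that commensurability becomes transparent, and then invoke the quasi-isometry invariance of $\phi$ (via~\cite{AlonsoFPnQI}), or rather the more elementary fact that the finiteness length is a commensurability invariant, to conclude the equality of finiteness lengths. The key observation is that $\BzeroR$ consists of matrices $\left(\begin{smallmatrix} u & r \\ 0 & u^{-1}\end{smallmatrix}\right)$ with $u \in R^\times$, $r \in R$, so that $\BzeroR \cong \Addi(R) \rtimes \Mult(R)$ with $\Mult(R)$ acting on $\Addi(R)$ by multiplication by $u^2$ (reading off the action from the conjugation $\left(\begin{smallmatrix} u & 0 \\ 0 & u^{-1}\end{smallmatrix}\right)\left(\begin{smallmatrix} 1 & r \\ 0 & 1\end{smallmatrix}\right)\left(\begin{smallmatrix} u^{-1} & 0 \\ 0 & u\end{smallmatrix}\right) = \left(\begin{smallmatrix} 1 & u^2 r \\ 0 & 1\end{smallmatrix}\right)$, which follows from~\bbref{SteinbergRelGLn}). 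Meanwhile $\Aff(R) \cong \Addi(R) \rtimes \Mult(R)$ with the action $u \cdot r = ur$, and $\Aff_-(R) \cong \Addi(R)\rtimes\Mult(R)$ with $u\cdot r = u^{-1}r$.

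First I would set up the common overgroup. All three groups share the same normal subgroup $\Addi(R)$ (with trivial-on-$\Addi(R)$ quotient $\Mult(R)$), and they differ only by which automorphism of $\Addi(R)$ a unit $u$ induces: $r\mapsto ur$, $r\mapsto u^{-1}r$, or $r\mapsto u^2 r$. The cleanest route: observe that $\Aff(R)$ and $\Aff_-(R)$ are literally isomorphic via the map sending $\left(\begin{smallmatrix} u & r \\ 0 & 1\end{smallmatrix}\right)$ to $\left(\begin{smallmatrix} 1 & r \\ 0 & u\end{smallmatrix}\right)$ — one checks this respects multiplication directly — so $\phi(\Aff(R)) = \phi(\Aff_-(R))$ with nothing to prove. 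For the comparison with $\BzeroR$, note the homomorphism $\Aff(R)\to\BzeroR$, $\left(\begin{smallmatrix} u & r \\ 0 & 1\end{smallmatrix}\right)\mapsto \left(\begin{smallmatrix} u & r \\ 0 & u^{-1}\end{smallmatrix}\right)$, is injective with image $\left(\begin{smallmatrix}u & r \\ 0 & u^{-1}\end{smallmatrix}\right)$, $u\in R^\times$ — which is all of $\BzeroR$ — so in fact $\Aff(R) \cong \BzeroR$ as well, via reading off that the $(2,2)$-entry is determined by the $(1,1)$-entry in $\SL_2$. Wait: that gives an outright isomorphism, not merely commensurability, which is even stronger; I would double-check the multiplication is respected ($\left(\begin{smallmatrix} u & r \\ 0 & u^{-1}\end{smallmatrix}\right)\left(\begin{smallmatrix} u' & r' \\ 0 & u'^{-1}\end{smallmatrix}\right) = \left(\begin{smallmatrix} uu' & ur'+ru'^{-1} \\ 0 & (uu')^{-1}\end{smallmatrix}\right)$, matching the product in $\Aff(R)$ under the bijection) — it does.

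So the genuine content, and the only place commensurability (rather than isomorphism) is really needed, is matching these up with the "squared action" that appears naturally elsewhere, or if the intended proof insists on commensurability I would instead use the index-$\leq 2$-flavoured argument: the subgroup of $\BzeroR$ with $u$ ranging over squares is commensurable to $\BzeroR$ only when $(R^\times)^2$ has finite index in $R^\times$, which is not automatic — hence the isomorphism route above is the correct one and I would present that. The main (minor) obstacle is bookkeeping: verifying that the stated maps are group homomorphisms requires carefully tracking the semidirect-product actions and the effect of the anti-automorphism $u\mapsto u^{-1}$ on $\Mult(R)$, i.e. confirming that the "reverse multiplication" in $\Aff_-(R)$ is what makes the $\Aff(R)\leftrightarrow\Aff_-(R)$ swap a homomorphism and not an anti-homomorphism. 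Once the homomorphism checks are done, $\phi(\Aff(R)) = \phi(\BzeroR) = \phi(\Aff_-(R))$ is immediate since isomorphic groups have equal finiteness length, and commensurability follows a fortiori (every group is commensurable with itself). I would close by remarking that Lemma~\ref{obviousboundsonphi}\bbref{obphi2} is not even needed here, since we have honest isomorphisms rather than retractions.
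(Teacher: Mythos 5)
There is a genuine error at the heart of your argument: the map $\left(\begin{smallmatrix} u & r \\ 0 & 1\end{smallmatrix}\right)\mapsto\left(\begin{smallmatrix} u & r \\ 0 & u^{-1}\end{smallmatrix}\right)$ is \emph{not} a homomorphism $\Aff(R)\to\BzeroR$. Your own displayed computation gives the product of the images the upper-right entry $ur'+ru'^{-1}$, whereas the image of the product, namely $\left(\begin{smallmatrix} uu' & ur'+r \\ 0 & (uu')^{-1}\end{smallmatrix}\right)$, has upper-right entry $ur'+r$; these disagree unless $u'=1$ or $r=0$. The discrepancy is not repairable: the torus acts on the unipotent part of $\Aff(R)$ by $r\mapsto ur$ but on that of $\BzeroR$ by $r\mapsto u^2r$, and the two groups are in general \emph{not} isomorphic. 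For instance, for $R=\F_2[t,t^{-1}]$ the abelianization of $\Aff(R)$ is $\F_2\times\Z$ while that of $\BzeroR$ is $\F_2^2\times\Z$ (the unipotent parts are divided by the ideals $(t-1)$ and $(t^2-1)=(t-1)^2$ of $\F_2[t^{\pm 1}]$, respectively). So ``commensurable'' in the statement cannot be upgraded to ``isomorphic'', and the route you dismiss is the one that actually works: the lemma is proved under the standing assumption, set up earlier in Section~\bref{capum} and re-invoked at the start of the paper's proof, that $\Mult(R)$ is finitely generated. Under that assumption $R^\times$ contains a torsion-free subgroup $S\cong\Z^k$ of finite index, the set $\set{u^2 \mid u\in S}$ has finite index in $R^\times$, and the finite-index subgroup $\set{\left(\begin{smallmatrix} u & r \\ 0 & u^{-1}\end{smallmatrix}\right) \mid u \in S,\ r \in R}$ of $\BzeroR$ is isomorphic to the finite-index subgroup $\set{\left(\begin{smallmatrix} u^2 & r \\ 0 & 1\end{smallmatrix}\right) \mid u\in S,\ r\in R}$ of $\Aff(R)$; this is precisely the paper's argument.

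A smaller slip: the map $\left(\begin{smallmatrix} u & r \\ 0 & 1\end{smallmatrix}\right)\mapsto\left(\begin{smallmatrix} 1 & r \\ 0 & u\end{smallmatrix}\right)$ you propose for $\Aff(R)\cong\Aff_-(R)$ is likewise not a homomorphism ($ur'+r$ versus $r'+ru'$ in the upper-right corner); one must invert the unit, i.e.\ send $\left(\begin{smallmatrix} u & r \\ 0 & 1\end{smallmatrix}\right)$ to $\left(\begin{smallmatrix} 1 & r \\ 0 & u^{-1}\end{smallmatrix}\right)$, as the paper does. That part of your argument is fixable; the claimed isomorphism with $\BzeroR$ is not, and with it the case where $\Mult(R)$ is infinite collapses.
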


\begin{proof}
Here we are under the assumption that $\Mult(R)$ is finitely generated. If $\Mult(R)$ is itself finite the lemma follows immediately, for in this case the unipotent subgroup $\left( \begin{smallmatrix} 1 & * \\ 0 & 1 \end{smallmatrix} \right)$ has finite index in the three groups considered. Otherwise $\BzeroR$ is seen to contain a subgroup of finite index which is isomorphic to a group of the form
\[
 \set{
 \left(\begin{smallmatrix}
  u^2 & r \\
  0 & 1
 \end{smallmatrix} \right)
 \in \GL_2(R) \mid u \in S^\times, r \in R
 }
\]
for some torsion-free subgroup of units $S \leq R^\times$---recall that the action of an element $\left( \begin{smallmatrix} u & 0 \\ 0 & u^{-1} \end{smallmatrix} \right) \in \BzeroR$ on $\left( \begin{smallmatrix} 1 & r \\ 0 & 1 \end{smallmatrix} \right)$ yields $\left( \begin{smallmatrix} 1 & u^2r \\ 0 & 1 \end{smallmatrix} \right)$ by the Steinberg relations~\eqref{SteinbergRel}. Since the group above is a subgroup of finite index of $\Aff(R)$, the first claim follows. Now $\Aff(R)$ is isomorphic to $\Aff_-(R)$ by inverting the action of the diagonal. More precisely, the map 
\[ \Aff(R) \ni \left( \begin{smallmatrix} 1 & r \\ 0 & 1 \end{smallmatrix} \right) \left( \begin{smallmatrix} u & 0 \\ 0 & 1 \end{smallmatrix} \right) \mapsto \left( \begin{smallmatrix} 1 & r \\ 0 & 1 \end{smallmatrix} \right) \left( \begin{smallmatrix} 1 & 0 \\ 0 & u^{-1} \end{smallmatrix} \right) \in \Aff_-(R) \]
is an isomorphism. The equalities on the finiteness length follow from the fact that $\phi$ is a quasi-isometry invariant~\cite[Corollary~9]{AlonsoFPnQI}.
\end{proof}

The following relates the finiteness lengths of Borel subgroups of $\SL_n$ and $\GL_n$.

\begin{lem} \label{BorelGLSL}
For any $R$ (with $\Mult(R)$ not necessarily finitely generated), the Borel subgroups $\mbB_n(R) \leq \GL_n(R)$ and $\mbB_n^\circ(R) \leq~\SL_n(R)$ have the same finiteness length, which in turn is no greater than $\phi(\BzeroR)$.
\end{lem}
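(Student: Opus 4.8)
The plan is to prove the two assertions separately: first $\phi(\mbB_n(R)) = \phi(\mbB_n^\circ(R))$, and then $\phi(\mbB_n^\circ(R)) \le \phi(\BzeroR)$. Under the running assumption of this section that $\Mult(R) = R^\times$ is finitely generated, $\mbD_n(R) \cong \Mult(R)^n$ is finitely generated abelian, so all of its subgroups are of type $\Fn{\infty}$; I shall use this together with Lemma~\ref{obviousboundsonphi}, Lemma~\ref{newlemma}, and the invariance of $\phi$ under passage to finite-index subgroups. (If $\Mult(R)$ is not finitely generated, all three finiteness lengths in the statement are $0$, since each of $\mbB_n(R)$, $\mbB_n^\circ(R)$ and $\BzeroR$ retracts onto $\Mult(R)$; so the statement is trivial in that case.)

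For the equality, I would compare both $\mbB_n(R)$ and $\mbB_n^\circ(R)$ with the intermediate subgroup $G_1 := \mbB_n^\circ(R) \cdot Z$, where $Z := \set{\lambda \, \mathrm{Id} : \lambda \in R^\times}$ is the group of scalar matrices. As $Z$ is central in $\GL_n(R)$, $G_1$ is a normal subgroup of $\mbB_n(R)$, and taking determinants identifies $\mbB_n(R)/G_1$ with $R^\times/(R^\times)^n$, which is finite; hence $\phi(\mbB_n(R)) = \phi(G_1)$. On the other hand $Z \cong \Mult(R)$ is central in $G_1$, with $G_1/Z \cong \mbB_n^\circ(R)/\mu_n(R)$ where $\mu_n(R) := Z \cap \mbB_n^\circ(R) = \set{\lambda \in R^\times : \lambda^n = 1}$ is finite (a torsion subgroup of the finitely generated group $R^\times$). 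Applying Lemma~\doublebref{obviousboundsonphi}{obphi3} first to $Z \into G_1 \onto G_1/Z$ and then to $\mu_n(R) \into \mbB_n^\circ(R) \onto \mbB_n^\circ(R)/\mu_n(R)$ yields $\phi(G_1) = \phi(\mbB_n^\circ(R)/\mu_n(R)) = \phi(\mbB_n^\circ(R))$, and therefore $\phi(\mbB_n(R)) = \phi(\mbB_n^\circ(R))$.

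For the inequality, the case $n = 2$ is vacuous since there $\mbB_2^\circ(R) = \BzeroR$; so I would assume $n \ge 3$ and split a copy of $\Aff(R)$ off of $\mbB_n^\circ(R)$. Let $\alpha_1$ be the first simple root of $\tA_{n-1}$, so $\mf{X}_{\alpha_1}(R) = \mb{E}_{12}(R)$, and set $N := \spans{\mf{X}_\beta(R) : \beta \in \Phi^+ \setminus \set{\alpha_1}} \le \mbB_n^\circ(R)$, which one checks to be exactly the group of unipotent upper triangular matrices with vanishing $(1,2)$-entry. Since $\alpha_1$ is simple, $\Phi^+ \setminus \set{\alpha_1}$ is closed under root addition and $\alpha_1 + \beta$, whenever it is a root, lies again in $\Phi^+ \setminus \set{\alpha_1}$; together with~\bbref{SteinbergRelGLn} this makes $N$ invariant under conjugation by $\mf{X}_{\alpha_1}(R)$ and by $\mbD_n^\circ(R) := \mbD_n(R) \cap \SL_n(R)$, so $N \nsgp \mbB_n^\circ(R)$ and
\[
 \mbB_n^\circ(R) = N \rtimes \bigl( \mf{X}_{\alpha_1}(R) \rtimes \mbD_n^\circ(R) \bigr).
\]
By Lemma~\doublebref{obviousboundsonphi}{obphi2}, $\phi(\mbB_n^\circ(R)) \le \phi(\mf{X}_{\alpha_1}(R) \rtimes \mbD_n^\circ(R))$. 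Writing $\mbD_n^\circ(R)$ as the internal direct product of $C := \set{\Diag(u,1,\dots,1,u^{-1}) : u \in R^\times} \cong \Mult(R)$ and the kernel $K_0$ of the character $\Diag(u_1,\dots,u_n) \mapsto u_1 u_2^{-1}$, relation~\bbref{SteinbergRelGLn} shows that $C$ acts on $\mf{X}_{\alpha_1}(R) \cong \Addi(R)$ by ordinary multiplication while $K_0$ acts trivially; hence $\mf{X}_{\alpha_1}(R) \rtimes \mbD_n^\circ(R) \cong \Aff(R) \times K_0$, with $K_0 \le \mbD_n^\circ(R)$ of type $\Fn{\infty}$. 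Lemma~\doublebref{obviousboundsonphi}{obphi4} and Lemma~\ref{newlemma} then give $\phi(\mf{X}_{\alpha_1}(R) \rtimes \mbD_n^\circ(R)) = \phi(\Aff(R)) = \phi(\BzeroR)$, finishing the argument.

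The step I expect to need the most care is the internal semidirect decomposition $\mbB_n^\circ(R) = N \rtimes (\mf{X}_{\alpha_1}(R) \rtimes \mbD_n^\circ(R))$: one has to verify that $N$ is genuinely normal --- which is exactly where simplicity of $\alpha_1$ enters, ensuring that adding $\alpha_1$ neither produces $\alpha_1$ from $\Phi^+ \setminus \set{\alpha_1}$ nor leaves that set --- and that the $(1,2)$-entry description identifies $\mf{X}_{\alpha_1}(R) \rtimes \mbD_n^\circ(R)$ as a bona fide complement. A secondary subtlety is the torus bookkeeping: the complement $C$ must be chosen to act on the root subgroup by ordinary multiplication, so as to produce $\Aff(R)$, rather than by $r \mapsto u^2 r$ as the diagonal torus element of $\SL_2$ would; the failure of this at $n = 2$ is precisely why that case is handled separately (harmlessly, since there $\mbB_2^\circ(R)$ already equals $\BzeroR$).
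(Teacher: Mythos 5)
Your proof is correct. The first half (the equality $\phi(\mbB_n(R)) = \phi(\mbB_n^\circ(R))$) is essentially the paper's argument repackaged: the paper runs the same comparison through the projective groups $\PmbB_n(R) = \mbB_n(R)/\mb{Z}_n(R)$ and $\PmbB_n^\circ(R) = \mbB_n^\circ(R)/\mb{Z}_n^\circ(R)$ in a commutative diagram, using the determinant and the finiteness of $\Mult(R)/\mathrm{pow}_n(\Mult(R))$ exactly as you use $R^\times/(R^\times)^n$; your intermediate group $G_1 = \mbB_n^\circ(R)\cdot Z$ is precisely the preimage of $(R^\times)^n$ under $\det$, and $G_1/Z$ is the paper's $\PmbB_n^\circ(R)$. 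For the inequality you take a genuinely different route: the paper simply retracts $\mbB_n(R)$ onto the copy of $\mbB_2(R)$ sitting in the top-left $2\times 2$ block (a group retraction because everything is upper triangular) and then invokes the already-established equality in the case $n=2$, whereas you split $\mbB_n^\circ(R)$ internally as $N \rtimes \bigl(\mf{X}_{\alpha_1}(R)\rtimes\mbD_n^\circ(R)\bigr)$ and identify the complement with $\Aff(R)\times K_0$. Your decomposition checks out---the normality of $N$ and the choice of the complement $C$ acting by plain multiplication rather than by $r \mapsto u^2 r$ are exactly the delicate points, and you address both---and it is in the same spirit as the case analysis in Propositions~\bref{RTGLn} and~\bref{RTcasebycase}. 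The trade-off: the paper's retraction is a one-line argument that sidesteps all root-system bookkeeping, while your version stays inside $\SL_n$ throughout and makes the appearance of $\Aff(R)$ (hence the role of Lemma~\bref{newlemma}) explicit rather than routing through $\mbB_2(R)$.
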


\begin{proof}
Though stated for arbitrary (commutative, unital) rings, the proof of the lemma is essentially Bux's proof in the $S$-arithmetic case in positive characteristic~\cite[Remark~3.6]{Bux04}. Again if $\Mult(R)$ is \emph{not} finitely generated, then $\phi(\mbB_n(R)) = \phi(\mbB_n^\circ(R)) = \phi(\BzeroR) = 0$, so that we go back to our standing assumption that $\Mult(R)$ is finitely generated. 

Consider the central subgroup $\mb{Z}_n(R) \leq \mbB_n(R)$ given by 
\[
 \mb{Z}_n(R) = \set{\Diag(u,\ldots,u) \mid u \in  R^\times} = \set{u\mb{1}_n \mid u \in R^\times} \cong \Mult(R).
\]
Its intersection with $\BzeroR \leq \SL_n(R)$ is the group $\mu_n(R)$ of $n$-th roots of unity of $R$, i.e. 
\[
\mu_n(R) \cong \mb{Z}_n(R) \cap \mbB_n^\circ(R) = \set{u\mb{1}_n \mid u \in R^\times \mbox{ and } u^n = 1}. 
\]
Given an arbitrary abelian group $A$ and $m \in \N$, let us denote by $(A)^m \leq A$ the subgroup of $m$-th powers, that is $(A)^m = \{ a^m \mid a \in A \}$. We observe that the determinant map, when restricted to $\mb{Z}_n(R) \leq \mbB_n(R)$, induces the $n$-th power map $u \mapsto u^n$ on $\mb{Z}_n(R) \cong \Mult(R)$, the kernel of this map being precisely $\mu_n(R)$. 

Thus, using the determinant and passing over to the projective groups $\PmbB_n^\circ(R) := \mbB_n^\circ(R)/\mu_n(R)$ and $\PmbB_n(R) := \mbB_n(R) / \mb{Z}_n(R)$, we obtain the following commutative diagram of short exact sequences. 
\begin{center}
\begin{tikzpicture}
 \node (Bn) {$\mbB_n(R)$};
 \node (Bn0) [left=of Bn] {$\mbB_n^\circ(R)$};
 \node (R*) [right=of Bn] {$\Mult(R)$};
 \node (Z0) [above=of Bn0] {$\mu_n(R)$};
 \node (Z) [above=of Bn] {$\mb{Z}_n(R)$};
 \node (R*n) [above=of R*] {$(\Mult(R))^n$};
 \node (PBn0) [below=of Bn0] {$\PmbB_n^\circ(R)$};
 \node (PBn) [below=of Bn] {$\PmbB_n(R)$};
 \node (finite) [below=of R*] {$\frac{\Mult(R)}{(\Mult(R))^n}$};
 \draw[->>] (Bn) to node [auto] {det} (R*);
 \draw[->>] (Bn) to (PBn);
 \draw[->>] (R*) to (finite);
 \draw[->>] (PBn) to (finite);
 \draw[->>] (Z) to node [auto] {\tiny{$u \mapsto u^n$}} (R*n);
 \draw[->>] (Bn0) to (PBn0);
 \draw[arrows = {Hooks[right]->}] (R*n) to (R*);
 \draw[arrows = {Hooks[right]->}] (PBn0) to (PBn);
 \draw[arrows = {Hooks[right]->}] (Z0) to (Z);
 \draw[arrows = {Hooks[right]->}] (Z0) to (Bn0);
 \draw[arrows = {Hooks[right]->}] (Z) to (Bn);
 \draw[arrows = {Hooks[right]->}] (Bn0) to (Bn);
\end{tikzpicture}
\end{center}
Since $\Mult(R)$ is finitely generated abelian, the groups $\mu_n(R)$ and ${\Mult(R)}/{(\Mult(R))^n}$ are finite, from which 
$\phi(\mbB_n^\circ(R)) = \phi(\PmbB_n^\circ(R))$ { and } $\phi(\PmbB_n^\circ(R)) = \phi(\PmbB_n(R))$ 
follow by Lemma~\ref{obviousboundsonphi}\eqref{obphi3} and quasi-isometry invariance of $\phi$ \cite[Corollary~9]{AlonsoFPnQI}. But again Lemma~\ref{obviousboundsonphi}\eqref{obphi3} yields $\phi(\mbB_n(R)) = \phi(\PmbB_n(R))$, whence the first claim of the lemma. 

Finally, any $\mbB_n(R)$ retracts onto $\mbB_2(R)$ via the map
\begin{center}
 \begin{tikzpicture}
  \node (Bn) {$
   \mbB_n(R) = 
 \left(\begin{smallmatrix}
  * & * & * & \cdots & * \\
  0 & * & * & \ddots & \vdots \\
  0 & 0 & * & \ddots & \vdots \\
  \vdots & \ddots & \ddots & \ddots & * \\
  0 & \cdots & \cdots & 0 & *
 \end{smallmatrix}\right)
  $};
  \node (B2) [right=of Bn] {$
    \left(\begin{smallmatrix}
  * & * & 0 & \cdots & 0 \\
  0 & * & 0 & \ddots & \vdots \\
  0 & 0 & 1 & \ddots & \vdots \\
  \vdots & \ddots & \ddots & \ddots & 0 \\
  0 & \cdots & \cdots & 0 & 1
 \end{smallmatrix}\right)
 \cong \mbB_2(R)$,
  };
  \draw[->>] (Bn) to (B2);
 \end{tikzpicture}
\end{center}
which yields the second claim.
\end{proof}

To prove the desired inequality $\phi(\mfX(R) \rtimes \mcH(R)) \leq \phi(\BzeroR)$, we shall use well-known matrix representations of classical groups. 
We warm-up by settling the simpler case where the given classical group $\mcG$ containing $\mfX \rtimes \mcH$ is the general linear group itself, which will set the tune for the remaining cases. (Recall that $\mcH$ is a maximal torus of $\mcG$.)

\begin{pps} \label{RTGLn}
Theorem~\ref{apendice} holds if $\mcG = \GL_n$.
\end{pps}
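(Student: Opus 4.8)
The plan is to read the torus action off from~\bbref{SteinbergRelGLn} and discard the part of the torus that acts trivially, which reduces the computation to the affine group $\Aff(R)$ of Lemma~\bref{newlemma}.

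First, recall that throughout this section we assume $\Mult(R)$ to be finitely generated. When $\mcG = \GL_n$, its maximal torus is the standard diagonal subgroup $\mbD_n$ and the unipotent root subgroups (with respect to it) are the elementary subgroups $\mb{E}_{ij}(R)$ with $i \neq j$; so I take $\mfX(R) \rtimes \mcH(R) = \mb{E}_{ij}(R) \rtimes \mbD_n(R) \leq \GL_n(R)$, which is metabelian---in particular the solubility hypothesis of Theorem~\bref{apendice} is vacuous here. As observed at the start of the section, it suffices to prove $\phi(\mb{E}_{ij}(R) \rtimes \mbD_n(R)) \leq \phi(\BzeroR)$.

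Next, I would use~\bbref{SteinbergRelGLn} to see that the conjugation action of $\mbD_n(R) \cong \Mult(R)^n$ on $\mb{E}_{ij}(R) \cong \Addi(R)$ factors through the surjective character $\chi \colon (u_1,\dots,u_n) \mapsto u_i u_j^{-1}$. Put $N := \ker \chi$, the subgroup of $\mbD_n(R)$ consisting of the diagonal matrices whose $i$-th and $j$-th entries coincide. Then $N \cong \Mult(R)^{n-1}$ is finitely generated abelian, hence of type $\Fn{\infty}$; and since $\mbD_n(R)$ is abelian and $N$ acts trivially on $\mb{E}_{ij}(R)$, the subgroup $N$ is central in $G := \mb{E}_{ij}(R) \rtimes \mbD_n(R)$. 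Therefore
\[
 G/N \;\cong\; \mb{E}_{ij}(R) \rtimes \bigl(\mbD_n(R)/N\bigr) \;\cong\; \Addi(R) \rtimes \Mult(R) \;=\; \Aff(R),
\]
the action on the right-hand side being ordinary multiplication.

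Finally, applying Lemma~\doublebref{obviousboundsonphi}{obphi3} to the short exact sequence $N \into G \onto G/N$ gives $\phi(G) = \phi(G/N) = \phi(\Aff(R))$, and Lemma~\bref{newlemma} identifies this with $\phi(\BzeroR)$. So in fact $\phi(\mfX(R) \rtimes \mcH(R)) = \phi(\BzeroR)$, which is sharper than needed. I do not expect a real obstacle in this case: the crux is simply that in $\GL_n$ the torus acts on a single root subgroup through one character whose kernel is a central, finitely generated abelian subgroup---exactly the configuration that collapses everything onto $\Aff(R)$ via Lemmas~\bref{obviousboundsonphi} and~\bref{newlemma}. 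The genuine work is deferred to the remaining classical groups, where the part of the torus one would like to throw away need not act trivially on $\mfX$, so a more careful choice of auxiliary quotient (or subgroup) will be required.
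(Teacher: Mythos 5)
Your proof is correct and follows essentially the same route as the paper: read the torus action off from~\bbref{SteinbergRelGLn} and discard the finitely generated abelian (hence $\Fn{\infty}$) part of the torus that acts trivially on $\mb{E}_{ij}(R)$. The only cosmetic difference is that you quotient by the full central kernel of the character $u_i u_j^{-1}$ and land on $\Aff(R)$ via Lemma~\doublebref{obviousboundsonphi}{obphi3} and Lemma~\bref{newlemma}, whereas the paper splits off $\prod_{k\neq i,j}D_k(R)\cong\Mult(R)^{n-2}$ as a direct factor, obtaining $\mbB_2(R)\times\Mult(R)^{n-2}$, and concludes with Lemma~\doublebref{obviousboundsonphi}{obphi4} and Lemma~\bref{BorelGLSL}.
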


\begin{proof} 
Here we take a matrix representation of $\mcG = \GL_n$ such that the given soluble subgroup $\mfX \rtimes \mcH$ is upper triangular. In this case, the maximal torus $\mcH$ is the subgroup of diagonal matrices of $\GL_n$, i.e. 
\[
\mcH(R) \cong \mbD_n(R) = \prod\limits_{i=1}^n D_i(R) \leq \GL_n(R),
\]
and $\mfX$ is identified with a subgroup of elementary matrices in a single fixed position, say $(i,j)$ with $i < j$. That is, 
\[
\mfX(R) \cong \mb{E}_{ij}(R) = \spans{\set{\eij(r) \mid r \in R}} \leq \GL_n(R).
\]
Recall that the action of the torus $\mcH(R) \cong \mbD_n(R)$ on the unipotent root subgroup $\mfX(R) \cong \mb{E}_{ij}(R)$ is given by the diagonal relations~\eqref{SteinbergRelGLn}. But such relations also imply the decomposition
\begin{align*}
\mb{E}_{ij}(R) \rtimes \mbD_n(R) &= \spans{\mb{E}_{ij}(R), D_i(R), D_j(R)} \times \prod\limits_{i \neq k \neq j} D_k(R) \\
&\cong \mbB_2(R) \times \Mult(R)^{n-2} 
\end{align*}
because all diagonal subgroups $D_k(R)$ with $k \neq i,j$ act trivially on the elementary matrices $\eij(r)$. Since we are assuming $\Mult(R)$ to be finitely generated, it follows from Lemmata~\doubleref{obviousboundsonphi}{obphi4} and~\ref{BorelGLSL} that
\[
\phi(\mfX(R) \rtimes \mcH(R)) = \min\set{\phi(\mbB_2(R)), \phi(\Mult(R)^{n-2})} = \phi(\BzeroR).
\]
\end{proof}

It remains to investigate the situation where the classical group $\mcG$ in the statement of Theorem~\ref{apendice} is a universal Chevalley--Demazure group scheme. Write $\mcG = \uCD$, with underlying root system $\Phi$ associated to the given maximal torus $\mcH \leq \uCD$ and with a fixed set of simple roots ${\Delta} \subset \Phi$. One has 
\[
 \mcH(R) = \prod\limits_{\alpha \in \Delta} \mcH_\alpha(R),
\]
and $\mfX$ is the unipotent root subgroup associated to some (positive) root $\eta \in \Phi^+$, that is,
\[
 \mfX(R) = \mfX_\eta(R) = \spans{\,\set{x_\eta(r) \mid r \in R}\,}.
\]
The proof proceeds by a case-by-case analysis on $\Phi$ and $\eta$. Instead of diving straight into all possible cases, some obvious reductions can be done.

\begin{lem} \label{obviousreductionforRT}
If Theorem~\ref{apendice} holds when $\mcG$ is a universal Chevalley--Demazure group scheme $\uCD$ of rank at most four and $\mfX = \mfX_\eta$ with $\eta \in \Phi^+$ simple, then it holds for any universal Chevalley--Demazure group scheme.
\end{lem}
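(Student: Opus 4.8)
The plan is to make two successive reductions, first on the distinguished root $\eta$ and then on the rank of $\Phi$. First I would reduce to the case $\eta \in \Delta$. Every root of $\Phi$ lies in the $W$-orbit of a simple root, so there is $\bar w \in W$, expressible as a product of simple reflections, with $\bar w(\eta) \in \Delta$. Lift $\bar w$ to the element $w \in \uCD(R)$ given by the corresponding product of the canonical Weyl lifts $w_\beta = x_\beta(1) x_{-\beta}(1)^{-1} x_\beta(1)$. Conjugation by $w$ is an inner automorphism of $\uCD(R)$; by the Steinberg relations~\bbref{SteinbergReflection} it carries each root subgroup $\mfX_\gamma(R)$ onto $\mfX_{\bar w(\gamma)}(R)$, and --- since each $w_\beta$ normalizes the torus, which is among the Steinberg relations (see~\cite{Steinberg}) --- it fixes $\mcH(R)$ setwise. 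Hence $w$ conjugates $\mfX_\eta(R) \rtimes \mcH(R)$ isomorphically onto $\mfX_{\bar w(\eta)}(R) \rtimes \mcH(R)$, and as $\phi$ is an isomorphism invariant we may assume from now on that $\eta \in \Delta$.

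Next I would cut down the rank. Let $\Delta_\eta \subseteq \Delta$ consist of $\eta$ together with the simple roots adjacent to $\eta$ in the Dynkin diagram of $\Phi$. The diagram of a finite irreducible root system is a tree in which no vertex has more than three neighbours, so $|\Delta_\eta| \leq 4$; moreover the full subdiagram on $\Delta_\eta$ is connected, being a star centred at $\eta$. Thus $\Psi := \Phi \cap \vspan_\R(\Delta_\eta)$ is an irreducible root system of rank at most four admitting $\Delta_\eta$ as a set of simple roots, with $\eta$ one of them. For $\beta \in \Delta \setminus \Delta_\eta$ the Cartan integer $(\eta,\beta)$ vanishes, so $\mcH_\beta(R)$ acts trivially on $\mfX_\eta(R)$ by~\bbref{SteinbergRel}; since the torus $\mcH(R) = \prod_{\alpha \in \Delta} \mcH_\alpha(R)$ is abelian, this gives the direct product decomposition
\[
 \mfX_\eta(R) \rtimes \mcH(R) \;=\; \Big( \mfX_\eta(R) \rtimes \textstyle\prod_{\alpha \in \Delta_\eta} \mcH_\alpha(R) \Big) \times \textstyle\prod_{\beta \in \Delta \setminus \Delta_\eta} \mcH_\beta(R).
\]
Under our standing assumption that $\Mult(R)$ is finitely generated, the right-hand factor is isomorphic to $\Mult(R)^{|\Delta \setminus \Delta_\eta|}$, hence of type $\Fn\infty$, so Lemma~\doublebref{obviousboundsonphi}{obphi4} yields $\phi(\mfX_\eta(R) \rtimes \mcH(R)) = \phi\big(\mfX_\eta(R) \rtimes \prod_{\alpha \in \Delta_\eta} \mcH_\alpha(R)\big)$.

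Finally I would identify $\mfX_\eta(R) \rtimes \prod_{\alpha \in \Delta_\eta} \mcH_\alpha(R)$ with the analogous subgroup $\mfX_\eta^\Psi(R) \rtimes \mcH^\Psi(R)$ of the universal Chevalley--Demazure group $\uCDw{\Psi}(R)$ of type $\Psi$. Both are, as abstract groups, the semidirect product of $\mfX_\eta(R) \cong \Addi(R)$ with $\prod_{\alpha \in \Delta_\eta} \mcH_\alpha(R) \cong \Mult(R)^{|\Delta_\eta|}$, where the $\alpha$-factor acts through $u \mapsto u^{(\eta,\alpha)}$; since a Cartan integer depends only on the pair of roots involved, $(\eta,\alpha)$ is the same whether computed in $\Phi$ or in $\Psi$, so the two semidirect products are isomorphic. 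Applying the hypothesis of the lemma to $\uCDw{\Psi}$ --- of rank at most four, with $\eta$ simple --- gives $\phi\big(\mfX_\eta^\Psi(R) \rtimes \mcH^\Psi(R)\big) \leq \phi(\BzeroR)$, and combining this with the previous step proves the lemma. The step I expect to be the main obstacle is this last identification: one must check that passing to the subsystem $\Psi$ neither destroys nor introduces relations among the relevant root elements --- which is exactly why it pays to arrange $\eta$ to be simple first, so that $\Delta_\eta$ is a genuine sub-simple-system and the portion of $\uCD(R)$ in play is governed solely by the Steinberg relations indexed by $\Delta_\eta$.
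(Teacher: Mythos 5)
Your argument is correct and follows essentially the same route as the paper: conjugate by a lifted Weyl element to make $\eta$ simple, split off the torus factors $\mcH_\beta(R)$ with $(\eta,\beta)=0$ as a direct $\Fn{\infty}$ factor via the Steinberg relations and Lemma~\doublebref{obviousboundsonphi}{obphi4}, and observe that at most four simple roots are non-orthogonal to $\eta$ because the Dynkin diagram is a tree of maximal degree three. Your final step --- explicitly identifying $\mfX_\eta(R) \rtimes \prod_{\alpha \in \Delta_\eta}\mcH_\alpha(R)$ with the corresponding subgroup of $\uCDw{\Psi}(R)$ via the invariance of the Cartan integers --- is carried out in slightly more detail than in the paper, which leaves this identification implicit, but it matches what is needed.
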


\begin{proof}
 Write $\mfX(R) = \mfX_\eta(R)$ and $\mcH(R) = \prod_{\alpha \in \Delta} \mcH_\alpha(R)$ as above. By~\eqref{SteinbergReflection} we can find an element $w$ in the Weyl group $W$ of $\Phi$ and a corresponding element $\omega \in \uCD(R)$ such that $w(\eta) \in \Phi^+$ is a simple root and 
 \[
  \omega (\mfX_\eta(R) \rtimes \mcH(R)) \omega^{-1} \cong \mfX_{w(\eta)}(R) \rtimes \mcH(R).
 \]
(The conjugation aboves takes place in the overgroup $\uCD(R)$.) We may thus assume $\eta \in \Phi^+$ to be simple. From the Steinberg relations~\eqref{SteinbergRel} we have that
 \begin{align*}
  \mfX(R) \rtimes \mcH(R) = \left( \mfX_\eta(R) \rtimes \left( \underset{\spans{\eta, \alpha} \neq 0}{\prod\limits_{\alpha \in \Delta}} \mcH_\alpha(R) \right) \right) \times \underset{\spans{\eta, \beta} = 0}{\prod\limits_{\beta \in \Delta}} \mcH_\beta(R),
 \end{align*}
 yielding $\phi(\mfX(R) \rtimes \mcH(R)) = \phi(\mfX_\eta(R) \rtimes \mcH^\circ(R))$ by Lemma~\doubleref{obviousboundsonphi}{obphi4}, where 
 \[
  \mcH^\circ(R) = \underset{\spans{\eta, \alpha} \neq 0}{\prod\limits_{\alpha \in \Delta}} \mcH_\alpha(R).
 \]
 Inspecting all possible Dynkin diagrams, it follows that the number of simple roots $\alpha \in \Delta$ for which $\spans{\eta, \alpha} \neq 0$ is at most four. The lemma follows.
\end{proof}

Thus, in view of Proposition~\ref{RTGLn} and Lemma~\ref{obviousreductionforRT}, the proof of Theorem~\ref{apendice} will be complete once we establish the following.

\begin{pps} \label{RTcasebycase}
 Theorem~\ref{apendice} holds whenever $\mcG$ is a universal Chevalley--Demazure group scheme $\uCD$ with 
 \[
\Phi \in \set{\tA_1, \tA_2, \tC_2, \tG, \tA_3, \tB_3, \tC_3, \tD_4}  
 \]
 and $\mfX \rtimes \mcH$ of the form 
 \[
\mfX \rtimes \mcH = \mfX_\eta \rtimes \left( \underset{\spans{\eta, \alpha} \neq 0}{\prod\limits_{\alpha \in \Delta}} \mcH_\alpha \right) \mbox{ with } \eta \in \Phi^+ \mbox{ simple.}
 \]
\end{pps}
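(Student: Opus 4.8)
The plan is to bypass explicit matrix models and argue directly from the Steinberg relations~\eqref{SteinbergRel}, which makes the computation uniform across the eight root systems. Fix $\Phi$ and a simple root $\eta \in \Phi^+$, and enumerate the simple roots $\alpha_1, \dots, \alpha_m \in \Delta$ with $\spans{\eta,\alpha_i} \neq 0$, so that $\mcH^\circ(R) = \prod_{i=1}^{m} \mcH_{\alpha_i}(R) \cong \Mult(R)^m$ with $m \leq 4$. By~\eqref{SteinbergRel}, the element $\prod_i h_{\alpha_i}(u_i)$ conjugates $x_\eta(r)$ to $x_\eta\!\left( \prod_i u_i^{(\eta,\alpha_i)}\, r \right)$, where each Cartan integer $c_i := (\eta,\alpha_i)$ lies in $\set{\pm 1, \pm 2, \pm 3}$ and is \emph{nonzero}. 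Hence, identifying $\mfX_\eta(R) \cong \Addi(R)$, the group under consideration is
\[
 \mfX_\eta(R) \rtimes \mcH^\circ(R) \;\cong\; \Addi(R) \rtimes_\chi \Mult(R)^m, \qquad \chi \colon (u_1, \dots, u_m) \longmapsto \prod\nolimits_{i=1}^{m} u_i^{c_i},
\]
with the action governed by the character $\chi$.

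The second step is to push this group into the affine group $\Aff(R) = \Addi(R) \rtimes \Mult(R)$ of Lemma~\bref{newlemma}. One checks that
\[
 \psi \colon \Addi(R) \rtimes_\chi \Mult(R)^m \longrightarrow \Aff(R), \qquad (r, (u_i)) \longmapsto (r, \chi((u_i))),
\]
is a homomorphism --- a one-line verification comparing the two multiplication rules --- with kernel $\set{0} \times \ker\chi$ and image $\Addi(R) \rtimes \im\chi$. Now $\ker\chi$ is a subgroup of the finitely generated abelian group $\Mult(R)^m$, hence itself finitely generated abelian, so $\phi(\ker\chi) = \infty$; and $\im\chi \supseteq \set{u^{c_1} \mid u \in R^\times} = \Mult(R)^{c_1}$ has finite index in $\Mult(R)$, since $\Mult(R)/\Mult(R)^{c_1}$ is finitely generated and annihilated by $c_1 \neq 0$. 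Hence $\Addi(R) \rtimes \im\chi$ has finite index in $\Aff(R)$. Combining Lemma~\doublebref{obviousboundsonphi}{obphi3} with the quasi-isometry invariance of $\phi$ and Lemma~\bref{newlemma} gives
\[
 \phi\big( \mfX_\eta(R) \rtimes \mcH^\circ(R) \big) = \phi\big( \Addi(R) \rtimes \im\chi \big) = \phi(\Aff(R)) = \phi(\BzeroR),
\]
which is the desired bound --- in fact an equality --- so that Proposition~\bref{RTcasebycase}, and hence (with Proposition~\bref{RTGLn} and Lemma~\bref{obviousreductionforRT}) Theorem~\bref{apendice}, follows.

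I expect the only genuinely delicate point to be checking that the semidirect-product description of $\mfX_\eta(R) \rtimes \mcH^\circ(R)$, the relation~\eqref{SteinbergRel}, and the isomorphisms $\mcH_{\alpha_i}(R) \cong \Mult(R)$ all hold verbatim over an \emph{arbitrary} commutative ring --- in particular in characteristics $2$ and $3$, where some $c_i$ equal $\pm 2$ or $\pm 3$; this is precisely what the structure theory of universal (simply connected) Chevalley--Demazure groups recalled in Section~\bref{prelim} provides. Two remarks. First, the argument uses nothing about $\Phi$ beyond $m \leq 4$ (indeed, nothing at all), so the explicit list $\set{\tA_1, \tA_2, \tC_2, \tG, \tA_3, \tB_3, \tC_3, \tD_4}$ only serves to make the statement finite. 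Second, the fact that $\ker\chi$ may carry torsion and that $\chi$ need not be surjective is exactly why one lands on a \emph{finite-index} subgroup of $\Aff(R)$ rather than on $\Aff(R)$ itself, which is why commensurability --- not isomorphism --- is the right tool here. Should one instead prefer the matrix-representation route used for $\GL_n$ in Proposition~\bref{RTGLn}, the same conclusion can be extracted case by case --- with $\mfX_\eta$ realized as a product of one or two elementary matrices in the classical types --- and $\Phi = \tG$, handled via its $7$-dimensional representation, is the only computationally fussy case.
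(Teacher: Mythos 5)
Your argument is correct, and it takes a genuinely different route from the paper. The paper proves Proposition~\bref{RTcasebycase} by a case-by-case analysis: for each $\Phi$ in the list it fixes an explicit matrix model of $\uCD$ (following Ree, Carter, Dieudonn\'e and Seligman), writes out $\mfX_\eta(R) \rtimes \mcH(R)$ or a slightly larger group $G(\Phi,\eta,R)$ with finitely generated abelian cokernel, and identifies it by hand with $\mbB_2(R) \times \Mult(R)^k$ or $\BzeroR \times \Mult(R)^k$, concluding via Lemmata~\bref{obviousboundsonphi} and~\bref{BorelGLSL}. You instead work representation-free: the Steinberg relations~\bbref{SteinbergRel} encode the whole torus action in a single character $\chi(u_1,\dots,u_m)=\prod u_i^{c_i}$ with all $c_i \neq 0$, and your map $\psi$ exhibits $\mfX_\eta(R)\rtimes\mcH^\circ(R)$ as a central extension of the finite-index subgroup $\Addi(R)\rtimes\im\chi \leq \Aff(R)$ by the finitely generated abelian group $\ker\chi$; Lemma~\doublebref{obviousboundsonphi}{obphi3}, quasi-isometry invariance and Lemma~\bref{newlemma} then finish the job. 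Your computations check out (in particular $\psi$ is a homomorphism, $\ker\psi = \set{0}\times\ker\chi$ is central, and $\Mult(R)/\Mult(R)^{c_1}$ is finite as a finitely generated abelian group killed by $c_1\neq 0$). What your approach buys is uniformity and brevity: as you note, it uses nothing about $\Phi$ beyond the nonvanishing of the Cartan integers, so the explicit list---and indeed the rank-$\leq 4$ reduction of Lemma~\bref{obviousreductionforRT}, and even the Weyl-group reduction to simple $\eta$---become unnecessary. What the paper's route buys is concreteness: explicit isomorphism types (not just commensurability classes) and self-contained matrix verifications that do not lean on the abstract structure theory of $\mcH(R) = \prod_{\alpha\in\Delta}\mcH_\alpha(R) \cong \Mult(R)^{\rk(\Phi)}$ for the simply connected scheme over an arbitrary ring. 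Two small points you should make explicit: both steps of your finiteness argument (that $\ker\chi$ is finitely generated and that $\Mult(R)^{c_1}$ has finite index) invoke the standing assumption, in force throughout Section~\bref{capum}, that $\Mult(R)$ is finitely generated; and the identification $\mcH^\circ(R)\cong\Mult(R)^m$ via $(u_i)\mapsto\prod h_{\alpha_i}(u_i)$ uses precisely the universality (simple connectedness) of $\uCD$ recorded in Section~\bref{prelim}.
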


\begin{proof}
 The idea of the proof is quite simple. In each case, we find a matrix group $G(\Phi,\eta,R)$ satisfying $\phi(G(\Phi,\eta,R)) = \phi(\BzeroR)$ and which fits into a short exact sequence
 \[
  \mfX_\eta(R) \rtimes \mcH(R) \into G(\Phi,\eta,R) \onto Q(\Phi,\eta,R)
 \]
where $Q(\Phi,\eta,R)$ is finitely generated abelian. In fact, $G(\Phi,\eta,R)$ can often be taken to be $\mfX_\eta(R) \rtimes \mcH(R)$ itself so that $Q(\Phi,\eta,R)$ is trivial in many cases. The proposition then follows from Lemma~\doubleref{obviousboundsonphi}{obphi1}.

To construct the matrix groups $G(\Phi,\eta,R)$ above, we use mostly Ree's matrix representations of classical groups~\cite{Ree} as worked out by Carter in~\cite{Carter}. (Recall that the case of Type $\tB_n$ was cleared by Dieudonn\'e~\cite{Dieudonne} after left open in Ree's paper.) In the exceptional case $\tG$ we follow Seligman's identification from~\cite{SeligmanLieAlgII}. We remark that Seligman's numbering of indices agrees with that of Carter's for $\text{G}_2$ as a subalgebra of $\text{B}_3$.

\underline{Type A:} Identify $\mcG^{{\rm sc}}_{A_n}$ with $\SL_{n+1}$ so that the soluble subgroup $\mfX_\eta \rtimes \mcH \leq \SL_{n+1}$ is upper triangular and the given maximal torus $\mcH$ of $\SL_{n+1}$ is the subgroup of diagonal matrices. Now, if $\rk(\Phi)=1$, then there is nothing to check, for in this case $\mfX_\eta(R) \rtimes \mcH(R)$ itself is isomorphic to $\BzeroR$. If $\Phi = \tA_2$, identify $\mfX_\eta(R)$ with the root subgroup $\mb{E}_{12}(R) \leq \SL_3(R)$ so that 
\[
 \mfX_\eta(R) \rtimes \mcH(R) \cong \set{ 
 \left(\begin{smallmatrix}
   a & r & 0 \\ 0 & b & 0 \\ 0 & 0 & (ab)^{-1}
  \end{smallmatrix} \right) 
  \in \SL_3(R)
  \, \middle| \, a, b \in R^\times, r \in R }.
\]
In this case, $G(\tA_2,\eta,R) := \mf{X}_\eta(R) \rtimes \mcH(R)$ is isomorphic to $\mbB_2(R)$ via
\begin{center}
 \begin{tikzpicture}
  \node (XH) {
  $ \mfX_\eta(R) \rtimes \mcH(R) \ni
  \left(\begin{smallmatrix}
   a & r & 0 \\ 0 & b & 0 \\ 0 & 0 & (ab)^{-1}
  \end{smallmatrix} \right)$
  };
  \node (B2) [right=of XH] {
  $\begin{pmatrix}
   a & r \\ 0 & b 
  \end{pmatrix} \in \mbB_2(R) \leq \GL_2(R)$.
  };
  \draw[arrows = {Bar[]->}] (XH) to (B2);
 \end{tikzpicture}
\end{center}
The case $\tA_2$ thus follows from Lemma~\ref{BorelGLSL}. 
If now $\Phi = \tA_3$, we identify $\mfX_\eta(R)$ with the root subgroup $\mb{E}_{23}(R) \leq \SL_4(R)$, which gives 
\[
 \mfX_\eta(R) \rtimes \mcH(R) \cong \set{ 
 \left(\begin{smallmatrix}
   a & 0 & 0 & 0 \\ 0 & b & r & 0 \\ 0 & 0 & c & 0 \\ 0 & 0 & 0 & (abc)^{-1}
  \end{smallmatrix} \right) 
  \in \SL_3(R)
  \, \middle| \, a, b, c \in R^\times, r \in R }.
\]
Here, $G(\tA_3,\eta,R) := \mfX_\eta(R) \rtimes \mcH(R)$ is isomorphic to the group $\mbB_2(R) \times~\Mult(R)$ via the map
\begin{center}
 \begin{tikzpicture}
  \node (XH) {
  $ 
  \left(\begin{smallmatrix}
   a & 0 & 0 & 0 \\ 0 & b & r & 0 \\ 0 & 0 & c & 0 \\ 0 & 0 & 0 & (abc)^{-1}
  \end{smallmatrix} \right)$
  };
  \node (B2) [right=of XH] {
  $\left( 
  \begin{pmatrix}
   b & r \\ 0 & c 
  \end{pmatrix}, a
  \right)$.
  };
  \draw[arrows = {Bar[]->}] (XH) to (B2);
 \end{tikzpicture}
\end{center}
Thus, $\phi(\mfX_\eta(R) \rtimes \mcH(R)) = \phi(\BzeroR)$ by Lemmata~\doubleref{obviousboundsonphi}{obphi4} and~\ref{BorelGLSL}. 

\underline{Type C:} Suppose $\Phi = \text{C}_n$. Following Ree and Carter we identify $\mcG^{{\rm sc}}_{\text{C}_n}$ with the symplectic group ${\rm Sp}_{2n} \leq \SL_{2n}$. If $\Phi = \text{C}_2$, denote by $\Delta = \set{\alpha, \beta}$ the set of simple roots, where $\alpha$ is short and $\beta$ is long. The unipotent root subgroups are given by
\[
 \mfX_\alpha(R) \cong \spans{\set{\ekl{12}(r)\ekl{43}(r)^{-1} \in \SL_4(R) \mid r \in R \}}} \, \mbox{ and}
\]
\[
 \mfX_\beta(R) \cong \mb{E}_{24}(R) = \spans{\set{\ekl{24}(r) \in \SL_4(R) \mid r \in R \}}},
\]
whereas the maximal torus $\mcH(R)$ is the diagonal subgroup
\begin{align*}
 \mcH(R) = \spans{\mcH_\alpha(R), \mcH_\beta(R)} \cong \langle \{ & \Diag(a,a^{-1},a^{-1},a), \\
   & \Diag(1,b,1,b^{-1}) \in \SL_4(R) \mid a,b \in R^\times \} \rangle.
\end{align*}
Now, if $\eta = \alpha$ (that is, if $\eta$ is short), then $\mfX_\eta(R) \rtimes \mcH(R)$ is the group
\[
\mfX_\eta(R) \rtimes \mcH(R) \cong \set{
\left( \begin{smallmatrix}
        u & r & 0 & 0 \\
        0 & v & 0 & 0 \\
        0 & 0 & u^{-1} & 0 \\
        0 & 0 & -r & v^{-1}
       \end{smallmatrix} \right)
       \in \Sp_4(R) \, \middle| \, u, v \in R^\times, r \in R
}. 
\]
Hence, $G(\tC_2,\eta,R) := \mfX_\eta(R) \rtimes \mcH(R)$ is isomorphic to $\mbB_2(R)$ via
\begin{center}
 \begin{tikzpicture}
  \node (XH) {$ 
  \mfX_\eta(R) \rtimes \mcH(R) \ni
  \left( \begin{smallmatrix}
        u & r & 0 & 0 \\
        0 & v & 0 & 0 \\
        0 & 0 & u^{-1} & 0 \\
        0 & 0 & -r & v^{-1}
       \end{smallmatrix} \right)
       $};
  \node (B2) [right=of XH] {$
  \begin{pmatrix}
   u & r \\ 
   0 & v 
  \end{pmatrix}
  \in \mbB_2(R)
  $,};
  \draw[arrows = {Bar[]->}] (XH) to (B2);
 \end{tikzpicture}
\end{center}
which yields $\phi(\mfX_\eta(R) \rtimes \mcH(R)) = \phi(\BzeroR)$ by Lemma~\ref{BorelGLSL}. On the other hand, if $\eta = \beta$ (i.e. $\eta$ is long), then $\mfX_\eta(R) \rtimes \mcH(R)$ is given by
\[
 \mfX_\eta(R) \rtimes \mcH(R) \cong \set{
\left( \begin{smallmatrix}
        u & 0 & 0 & 0 \\
        0 & v & 0 & r \\
        0 & 0 & u^{-1} & 0 \\
        0 & 0 & 0 & v^{-1}
       \end{smallmatrix} \right)
       \in \Sp_4(R) \, \middle| \, u, v \in R^\times, r \in R
}, 
\]
which is isomorphic to $\BzeroR \times \Mult(R)$ via
\begin{center}
 \begin{tikzpicture}
  \node (XH) {$ 
  \mfX_\eta(R) \rtimes \mcH(R) \ni
  \left( \begin{smallmatrix}
        u & 0 & 0 & 0 \\
        0 & v & 0 & r \\
        0 & 0 & u^{-1} & 0 \\
        0 & 0 & 0 & v^{-1}
       \end{smallmatrix} \right)
       $};
  \node (B02R*) [right=of XH] {$
  \left( \left(\begin{smallmatrix}
   v & r \\ 
   0 & v^{-1} 
  \end{smallmatrix}\right),
  u \right)
  \in \BzeroR \times \Mult(R)
  $.};
  \draw[arrows = {Bar[]->}] (XH) to (B02R*);
 \end{tikzpicture}
\end{center}
Thus, $\phi(\mfX_\eta(R) \rtimes \mcH(R)) = \phi(\BzeroR)$ by Lemma~\doubleref{obviousboundsonphi}{obphi4}. 

Lastly, assume $\Phi = \text{C}_3$ and denote its set of simple roots by $\Delta=\set{\alpha_1, \alpha_2, \beta}$, where $\beta$ is the long root. We have $\mcG^{{\rm sc}}_{\text{C}_3} = \Sp_6$ with the root subgroups given by the following matrix subgroups.
\[
 \mfX_{\alpha_1}(R) \cong \spans{\set{\ekl{12}(r)\ekl{54}(r)^{-1} \in \SL_6(R) \mid r \in R}},
\]
\[
 \mfX_{\alpha_2}(R) \cong \spans{\set{\ekl{23}(r)\ekl{65}(r)^{-1} \in \SL_6(R) \mid r \in R}},
\]
\[
 \mfX_{\beta}(R) \cong \mb{E}_{36}(R) = \spans{\set{\ekl{36}(r) \in \SL_6(R) \mid r \in R}},
\]
and
\begin{align*}
 & \mcH(R) = \spans{\mcH_{\alpha_1}(R), \mcH_{\alpha_2}(R), \mcH_\beta(R)} \cong \langle \, \{ \Diag(a_1, a_1^{-1}, 1, a_1^{-1}, a_1, 1), \\
 & \Diag(1,a_2,a_2^{-1},1,a_2^{-1},a_2), \Diag(1, 1, b, 1, 1, b^{-1}) \mid a_1, a_2, b \in R^\times \}\, \rangle.
\end{align*}
Here we are only interested in the case where $\eta$ is the central root $\alpha_2$, for otherwise $\eta$ would be orthogonal to one of the other simple roots.
Thus,
\[
 \mfX_\eta(R) \rtimes \mcH(R) \cong \set{
\left( \begin{smallmatrix}
        u & 0 & 0 & 0 & 0 & 0 \\
        0 & v & r & 0 & 0 & 0 \\
        0 & 0 & w & 0 & 0 & 0 \\
        0 & 0 & 0 & u^{-1} & 0 & 0 \\
        0 & 0 & 0 & 0 & v^{-1} & 0 \\
        0 & 0 & 0 & 0 & -r & w^{-1}
       \end{smallmatrix} \right)
       \in \Sp_6(R) \, \middle| \, u, v, w \in R^\times, r \in R
}. 
\]
The isomorphism
\begin{center}
 \begin{tikzpicture}
  \node (XH) {$ 
\left( \begin{smallmatrix}
        u & 0 & 0 & 0 & 0 & 0 \\
        0 & v & r & 0 & 0 & 0 \\
        0 & 0 & w & 0 & 0 & 0 \\
        0 & 0 & 0 & u^{-1} & 0 & 0 \\
        0 & 0 & 0 & 0 & v^{-1} & 0 \\
        0 & 0 & 0 & 0 & -r & w^{-1}
       \end{smallmatrix} \right)
       $};
  \node (B2R*) [right=of XH] {$
  \left( \begin{pmatrix}
   v & r \\ 
   0 & w 
  \end{pmatrix},
  u \right)
  $};
  \draw[arrows = {Bar[]->}] (XH) to (B2R*);
 \end{tikzpicture}
\end{center}
between $G(\tC_3,\eta,R) := \mfX_\eta(R) \rtimes \mcH(R)$ and $\mbB_2(R) \times \Mult(R)$ 
then yields $\phi(\mfX_\eta(R) \rtimes \mcH(R)) = \phi(\BzeroR)$ by Lemmata~\doubleref{obviousboundsonphi}{obphi4} and~\ref{BorelGLSL}.

\underline{Type D:} The case of maximal rank concerns the root system $\Phi = \text{D}_4$, with set of simple roots $\set{\alpha_1, \alpha_2, \alpha_3, \alpha_4}$ and the given simple root $\eta$ being equal to the central root $\alpha_2$ which is \emph{not} orthogonal to any other simple root. Here, $\mcG^{{\rm sc}}_{\text{D}_4} = \mathrm{Spin}_8$. Following Ree and Carter, the root subgroups and the maximal torus are given as follows.
\[
 \mfX_{\alpha_1}(R) \cong \spans{\set{\ekl{12}(r)\ekl{65}^{-1} \in \SL_8(R) \mid r \in R}},
\]
\[
 \mfX_{\alpha_2}(R) \cong \spans{\set{\ekl{23}(r)\ekl{76}^{-1} \in \SL_8(R) \mid r \in R}},
\]
\[
 \mfX_{\alpha_3}(R) \cong \spans{\set{\ekl{34}(r)\ekl{87}^{-1} \in \SL_8(R) \mid r \in R}},
\]
\[
 \mfX_{\alpha_4}(R) \cong \spans{\set{\ekl{38}(r)\ekl{47}^{-1} \in \SL_8(R) \mid r \in R}},
\]
and
\begin{align*}
 \mcH(R) = & \spans{\mcH_{\alpha_1}(R), \, \mcH_{\alpha_2}(R), \, \mcH_{\alpha_3}(R), \,  \mcH_{\alpha_4}(R)} \\ \cong & \langle \, \{ \Diag(a_1, a_1^{-1}, 1, 1, a_1^{-1}, a_1, 1, 1), \, \, \Diag(1, a_2, a_2^{-1}, 1, 1, a_2^{-1}, a_2, 1), \\
 & \Diag(1, 1, a_3, a_3^{-1}, 1, 1, a_3^{-1}, a_3), \quad \Diag(1, 1, a_4, a_4, 1, 1, a_4^{-1}, a_4^{-1}) \\
 & \in \SL_8(R) \mid a_1, a_2, a_3, a_4 \in R^\times \} \, \rangle.
\end{align*}
The torus $\mcH(R)$ is a subgroup of the following diagonal group.
\[
 T(R) = \spans{ \, \set{\Diag(u,v,w,x,u^{-1},v^{-1},w^{-1},x^{-1}) \mid u,v,w,x \in R^\times}\,} \leq  \SL_8(R).
\]
Recall that $\eta = \alpha_2$, the central root. Set $G(\tD_4,\alpha_2,R) := \mfX_\eta(R) \rtimes T(R)$. We have a short exact sequence
\[
 \mfX_\eta(R) \rtimes \mcH(R) \into G(\tD_4,\alpha_2,R) \onto \frac{T(R)}{\mcH(R)},
\]
with quotient $T(R)/\mcH(R)$ finitely generated abelian. But $G(\tD_4,\alpha_2,R)$ is isomorphic to $\mbB_2(R) \times \Mult(R)^2$ via
\begin{center}
 \begin{tikzpicture}
  \node (XH) {$ 
  G(\text{D}_4,\alpha_2,R) \ni
\left( \begin{smallmatrix}
        u & 0 & 0 & 0 & 0 & 0 & 0 & 0 \\
        0 & v & r & 0 & 0 & 0 & 0 & 0 \\
        0 & 0 & w & 0 & 0 & 0 & 0 & 0 \\
        0 & 0 & 0 & x & 0 & 0 & 0 & 0 \\
        0 & 0 & 0 & 0 & u^{-1} & 0 & 0 & 0 \\
        0 & 0 & 0 & 0 & 0 & v^{-1} & 0 & 0 \\
        0 & 0 & 0 & 0 & 0 & -r & w^{-1} & 0 \\
        0 & 0 & 0 & 0 & 0 & 0 & 0 & x^{-1}
       \end{smallmatrix} \right)
       $};
  \node (B2R*2) [right=of XH] {$
  \left( \left( \begin{smallmatrix}
   v & r \\ 
   0 & w 
  \end{smallmatrix} \right),
  u, x \right)
  $,};
  \draw[arrows = {Bar[]->}] (XH) to (B2R*2);
 \end{tikzpicture}
\end{center}
whence $\phi(\mf{X}_\eta(R) \rtimes \mcH(R)) \leq \phi(\BzeroR)$ by Lemmata~\doubleref{obviousboundsonphi}{obphi4} and~\ref{BorelGLSL}.

\underline{Types B and G:} We approach the remaining cases using the embedding of the group of type $\tG$ into the spin group of type $\tB_3$. Assume for the remainder of the proof that the base ring $R$ has $\carac(R) \neq 2$ in order to simplify the choice of a symmetric matrix preserved by the elements of $\mcG^{{\rm sc}}_{\tB_3}(R) = \mathrm{Spin}_7(R)$. This assumption is harmless, for the proof in the case $\carac(R) = 2$ follows analogously (after a change of basis) using Dieudounn\'e's matrix representation~\cite{Dieudonne}.

Denote by $\Delta = \set{\alpha_1, \alpha_2, \beta}$ the set of simple roots of $\tB_3$, where $\beta$ is the short root. The root subgroups are given below.
\[
 \mfX_{\alpha_1}(R) \cong \spans{\set{\ekl{23}(r)\ekl{65}(r)^{-1} \in \SL_7(R) \mid r \in R}},
\]
\[
 \mfX_{\alpha_2}(R) \cong \spans{\set{\ekl{34}(r)\ekl{76}(r)^{-1} \in \SL_7(R) \mid r \in R}},
\]
\[
 \mfX_{\beta}(R) \cong \spans{\set{\exp(r \cdot (2 \cdot \Ekl{41} - \Ekl{17})) \in \SL_7(R) \mid r \in R}},
\]
\[
 \mcH_{\alpha_1}(R) \cong \spans{\set{\Diag(1,a_1,a_1^{-1},1,a_1^{-1},a_1,1) \in \SL_7(R) \mid a_1 \in R^\times}},
\]
\[
 \mcH_{\alpha_2}(R) \cong \spans{\set{\Diag(1,1,a_2,a_2^{-1},1,a_2^{-1},a_2) \in \SL_7(R) \mid a_2 \in R^\times}},
\]
and
\[
 \mcH_{\beta}(R) \cong \spans{\set{\Diag(1,1,1,b^2,1,1,b^{-2}) \in \SL_7(R) \mid b \in R^\times}}.
\]
Now let $\Lambda = \set{\alpha, \gamma}$ denote the set of simple roots of $\tG$, where $\gamma$ is the short root. In the identification above, the embedding of $\tG$ into $\tB_3$ maps the long root $\alpha \in \tG$ to the (long) root $\alpha_1 \in \tB_3$, and the root subgroups of $\mcG^{{\rm sc}}_{\tG} \leq \mcG^{{\rm sc}}_{\tB_3}$ are listed below.
\[
 \mfX_\alpha(R) = \mfX_{\alpha_1}(R),
\]
\[
 \mfX_{\gamma}(R) \cong \spans{\set{\exp(r \cdot (2 \cdot \Ekl{12} + \Ekl{37} - \Ekl{46} - \Ekl{51})) \in \SL_7(R) \mid r \in R}},
\]
\[
 \mcH_{\alpha}(R) = \mcH_{\alpha_1}(R),
\]
and
\[
 \mcH_{\gamma}(R) \cong \spans{\set{\Diag(1,c^{-2},c,c,c^2,c^{-1},c^{-1}) \in \SL_7(R) \mid c \in R^\times}}.
\]

We now return to the soluble subgroup $\mfX_\eta \rtimes \mcH \leq \uCD$. In the case $\Phi = \tB_3$, the maximal torus $\mcH(R)$ is the diagonal subgroup $\mcH(R) = \spans{\mcH_{\alpha_1}(R), \, \mcH_{\alpha_2}(R), \, \mcH_\beta(R)}$ and $\eta$ is the middle simple root $\alpha_2$ which is not orthogonal to the other simple roots, so that $\mfX_\eta(R) = \mfX_{\alpha_2}(R)$. Let $T(R)$ be the diagonal group
\[
 T(R) = \spans{\set{\Diag(1,u,v,w,u^{-1},v^{-1},w^{-1}) \in \SL_7(R) \mid u,v,w \in R^\times}}.
\]
Setting $G(\tB_3,\alpha_2,R) := \mfX_\eta(R) \rtimes T(R)$ we obtain a short exact sequence
\[
 \mfX_\eta(R) \rtimes \mcH(R) \into G(\tB_3,\alpha_2,R) \onto \frac{T(R)}{\mcH(R)},
\]
which gives $\phi(\mfX_\eta(R) \rtimes \mcH(R)) \leq \phi(G(\tB_3,\alpha_2,R))$ by Lemma~\doubleref{obviousboundsonphi}{obphi1}. But the isomorphism $G(\tB_3,\alpha_2,R) \cong \mbB_2(R) \times \Mult(R)$ given by
\begin{center}
 \begin{tikzpicture}
  \node (XH) {$ 
  G(\text{B}_3,\alpha_2,R) \ni
\left( \begin{smallmatrix}
        1 & 0 & 0 & 0 & 0 & 0 & 0 \\
        0 & u & 0 & 0 & 0 & 0 & 0 \\
        0 & 0 & v & r & 0 & 0 & 0 \\
        0 & 0 & 0 & w & 0 & 0 & 0 \\
        0 & 0 & 0 & 0 & u^{-1} & 0 & 0 \\
        0 & 0 & 0 & 0 & 0 & v^{-1} & 0 \\
        0 & 0 & 0 & 0 & 0 & -r & w^{-1}
       \end{smallmatrix} \right)
       $};
  \node (B2R*) [right=of XH] {$
  \left( \left( \begin{smallmatrix}
   v & r \\ 
   0 & w 
  \end{smallmatrix} \right),
  u \right)
  $,};
  \draw[arrows = {Bar[]->}] (XH) to (B2R*);
 \end{tikzpicture}
\end{center}
yields $\phi(G(\tB_3,\alpha_2,R)) = \phi(\BzeroR)$ by Lemmata~\doubleref{obviousboundsonphi}{obphi4} and~\ref{BorelGLSL}.

Suppose now that $\Phi = \tG$. The maximal torus $\mcH(R)$ is the diagonal subgroup $\mcH(R) = \spans{\mcH_{\alpha}(R), \, \mcH_\gamma(R)}$. This time we consider the diagonal subgroup 
\[
 T(R) = \spans{\set{\Diag(1,u,v,u^{-1}v^{-1},u^{-1},v^{-1},uv) \in \SL_7(R) \mid u,v \in R^\times}}
\]
and let $G(\tG,\eta,R) := \mfX_\eta(R) \rtimes T(R)$, again obtaining a short exact sequence $\mfX_\eta(R) \rtimes \mcH(R) \into G(\text{G}_2,\eta,R) \onto T(R)/\mcH(R)$ which yields $\phi(\mfX_\eta(R) \rtimes~\mcH(R)) \leq \phi(G(\tG,\eta,R))$. If $\eta$ is the long root $\alpha = \alpha_1$, then the map
\begin{center}
 \begin{tikzpicture}
  \node (XH) {$ 
  G(\tG,\eta,R) \ni
\left( \begin{smallmatrix}
        1 & 0 & 0 & 0 & 0 & 0 & 0 \\
        0 & u & r & 0 & 0 & 0 & 0 \\
        0 & 0 & v & 0 & 0 & 0 & 0 \\
        0 & 0 & 0 & u^{-1}v^{-1} & 0 & 0 & 0 \\
        0 & 0 & 0 & 0 & u^{-1} & 0 & 0 \\
        0 & 0 & 0 & 0 & -r & v^{-1} & 0 \\
        0 & 0 & 0 & 0 & 0 & 0 & uv
       \end{smallmatrix} \right)
       $};
  \node (B2) [right=of XH] {$
  \begin{pmatrix}
   u & r \\ 
   0 & v 
  \end{pmatrix}
  $};
  \draw[arrows = {Bar[]->}] (XH) to (B2);
 \end{tikzpicture}
\end{center}
yields an isomorphism $G(\tG,\eta,R) \cong \mbB_2(R)$, whence $\phi(G(\tG,\eta,R)) = \phi(\BzeroR)$ by Lemma~\ref{BorelGLSL}. If $\eta$ is the short root $\gamma$, we observe that
\[
 G(\tG,\eta,R) = \mfX_\gamma(R) \rtimes \mcH(R) \cong (\mfX_\gamma(R) \rtimes \mcH_{\alpha}(R)) \times \Mult(R)
\]
because the following holds for any $x_\gamma(r) = \exp(r \cdot (2 \cdot \Ekl{12} + \Ekl{37} - \Ekl{46} - \Ekl{51})) \in \mfX_\gamma(R)$ and $d~=~\Diag(1,u,v,u^{-1}v^{-1},u^{-1},v^{-1},uv) \in T(R)$.
\[
 d x_\gamma(r) d^{-1} =
 d 
 \left( \begin{smallmatrix}
        1 & 2r & 0 & 0 & 0 & 0 & 0 \\
        0 & 1 & 0 & 0 & 0 & 0 & 0 \\
        0 & 0 & 1 & 0 & 0 & 0 & r \\
        0 & 0 & 0 & 1 & 0 & -r & 0 \\
        -r & -r^2 & 0 & 0 & 1 & 0 & 0 \\
        0 & 0 & 0 & 0 & 0 & 1 & 0 \\
        0 & 0 & 0 & 0 & 0 & 0 & 1
       \end{smallmatrix} \right)
       d^{-1}
 =  \left( \begin{smallmatrix}
        1 & u^{-1}2r & 0 & 0 & 0 & 0 & 0 \\
        0 & 1 & 0 & 0 & 0 & 0 & 0 \\
        0 & 0 & 1 & 0 & 0 & 0 & u^{-1}r \\
        0 & 0 & 0 & 1 & 0 & -u^{-1}r & 0 \\
        -u^{-1}r & -u^{-2}r^2 & 0 & 0 & 1 & 0 & 0 \\
        0 & 0 & 0 & 0 & 0 & 1 & 0 \\
        0 & 0 & 0 & 0 & 0 & 0 & 1
       \end{smallmatrix} \right).
\]
Hence, $\phi(G(\tG,\eta,R)) = \phi(\mfX_\gamma(R) \rtimes \mcH_{\alpha}(R))$. 

The group $\mfX_\gamma(R) \rtimes \mcH_{\alpha}(R)$ above is in turn isomorphic to the affine group $\Aff_-(R)$, 
which is commensurable with $\BzeroR$ by Lemma~\ref{newlemma}. Thus, 
\[
 \phi(\mfX_\eta(R) \rtimes \mcH_\alpha(R)) = \phi(\Addi(R) \rtimes \Mult(R)) = \phi(\BzeroR)
\]
by Lemmata~\doubleref{obviousboundsonphi}{obphi4} and~\ref{newlemma}. This finishes the proof of the proposition and thus of Theorem~\ref{apendice}.
\end{proof}

\subsection{A geometric version of Theorem~\ref{apendice}} \label{RporQR}

We remark that Theorem~\ref{apendice} can be slightly modified as to avoid a representation $\rho~:~G \onto \mf{X}(R) \rtimes \mcH(R)$ with sequence $\ker(\rho) \into G \onto \mf{X}(R) \rtimes \mcH(R)$ split. We shall relax the hypothesis on $\rho$ at the cost of an assumption on the base ring $R$.

Recall that a metric space $Y$ is a \emph{quasi-retract} of a metric space $X$ if there exists a pair $r : X \to Y$ and $\iota : Y \to X$ of $(C,D)$-Lipschitz functions such that $d_Y(r \circ \iota (y), y) \leq D$ for all $y \in Y$; see~\cite{AlonsoFPnQI}. The point now is that quasi-retracts also inherit homotopical finiteness properties. In particular, if $G$ and $H$ are \emph{finitely generated} groups such that $r : G \to H$ is a quasi-retract, J.~M.~Alonso proved that $\phi(G) \leq \phi(H)$; see~\cite[Theorem~8]{AlonsoFPnQI}. Of course, group retracts are particular examples of quasi-retracts.

The advantage here is that a quasi-retract $r : G \to H$ needs not be a group homomorphism. In a recent remarkable paper, R.~Skipper, S.~Witzel and M.~Zaremsky~\cite{RachelStefanMatt} used the finiteness length and quasi-retracts to construct infinitely many quasi-isometry classes of finitely presented simple groups.

Using the geometric language above, we have the following.

\begin{thm}
  Let $G$ be a group and let $R$ be a commutative ring with unity for which $\Aff(R)$ is finitely generated. If there exists a quasi-retract $\rho : G \to \mfX(R) \rtimes \mcH(R)$, where $\mfX$ and $\mcH$ denote, respectively, a unipotent root subgroup and a maximal torus of any of the affine groups $\Aff$, $\Aff_-$ or of a classical group, then $\phi(G)\leq\phi(\BzeroR)$.
\end{thm}

\begin{proof}
 The assumption on $R$ implies that $\mf{X}(R) \rtimes \mcH(R)$ is finitely generated. Now, if $G$ is \emph{not} finitely generated, then $\phi(G) \leq \phi(\BzeroR)$ holds trivially. Otherwise one has $\phi(G) \leq \phi(\mfX(R) \rtimes \mcH(R))$ by Alonso's theorem~\cite[Theorem~8]{AlonsoFPnQI}. The fact that $\phi(\mfX(R)~\rtimes~\mcH(R)) \leq \phi(\BzeroR)$ was proved in Section~\ref{capum}, whence the theorem.
\end{proof}

\section{The Finiteness Lengths of Abels' Groups} \label{capdois}

In what follows we give a full proof of Theorem~\ref{Strebel's} and discuss open problems regarding finiteness properties of Abels' groups. 

We first observe that $\mbA_n(R)$ decomposes as a semi-direct product $\mbA_n(R) = \mbU_n(R) \rtimes \mbT_n(R)$, where
\[
 \mbU_n(R) =
 \left( \begin{smallmatrix}
           1 & * & \cdots & \cdots & * \\
            0 & 1 & \ddots & & \vdots \\
            \vdots & \ddots & \ddots & \ddots & \vdots \\
            0 & \cdots & 0 & 1 & * \\
            0 & \cdots & \cdots & 0 & 1
        \end{smallmatrix} \right),
 \mbT_n(R) =
  \left( \begin{smallmatrix}
           1 & 0 & \cdots & \cdots & 0 \\
            0 & * & \ddots & & \vdots \\
            \vdots & \ddots & \ddots & \ddots & \vdots \\
            0 & \cdots & 0 & * & 0 \\
            0 & \cdots & \cdots & 0 & 1
        \end{smallmatrix} \right) = \mbA_n(R) \cap \mbD_n(R).
\]
Just as with Abels' original group $\mbA_4(\Z[1/p])$ from~\cite{Abels0}, we see using relations~\eqref{Ancommutators} and~\eqref{SteinbergRelGLn} that the center of $\mbA_n(R)$ is the additive group $Z(\mbA_n(R)) = \mb{E}_{1n}(R) \cong \Addi(R)$ generated by all elementary matrices in the upper right corner. 

The first claims of Theorem~\ref{Strebel's} are well-known and follow from standard methods. For the sake of completeness, we shall also prove them in detail below in Section~\ref{provaAbels}. The tricky part of Theorem~\ref{Strebel's} is the claim~\eqref{nao-facil}. The outline of its proof is as follows. The first inequality follows from Theorem~\ref{apendice}, and it is also not hard to see that $\mbA_n(R)$ is finitely generated whenever $\BzeroR$ is so. Now, for a pair $(n,R)$ with $n \in \Z_{\geq 4}$, we construct a finite-dimensional connected simplicial complex $CC(\scrH(n,R))$ on which $\mbA_n(R)$ acts cocompactly by cell-permuting homeomorphisms. Generalizing a result due to S.~Holz, we show that the space $CC(\scrH(n,R))$ is simply-connected regardless of $R$. Using $\Sigma$-theory for metabelian groups~\cite{BieriStrebel}, we prove that all cell stabilizers of the given action $\mbA_n(R) \curvearrowright CC(\scrH(n,R))$ are finitely presented whenever $\BzeroR$ is so. We finish off the proof by invoking the following well-known criterion whose final form below is due to K.~S.~Brown. 

\begin{thm}[\cite{Brown0}] \label{Brownzinho}
 Let $G$ be a group acting by cell-permuting homeomorphisms on a connected, simply-connected CW-complex $X$ such that \emph{(a)} all vertex-stabilizers are finitely presented, \emph{(b)} all edge-stabilizers are finitely generated, and \emph{(c)} the $G$-action on the $2$-skeleton $X^{(2)}$ is cocompact. Then $G$ is finitely presented, that is, $\phi(G) \geq 2$.
\end{thm}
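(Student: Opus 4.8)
The plan is to follow the classical route to criteria of this kind. Since the hypotheses only constrain cells of dimension at most $2$ and since simple connectivity is a statement about $\pi_1$, I would first replace $X$ by its $2$-skeleton $X^{(2)}$: this changes neither connectivity nor $\pi_1$, keeps the vertex- and edge-stabilizers exactly as they are, and imposes no condition on the now top-dimensional $2$-cell stabilizers. The problem then splits into \textbf{(I)} producing a finite generating set for $G$, and \textbf{(II)} showing that the relations among these generators follow from finitely many; the conclusion $\phi(G)\geq 2$ is then immediate.

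For \textbf{(I)} I would run the Bass--Serre-type argument on the connected graph $X^{(1)}$. Cocompactness gives finitely many $G$-orbits of vertices and of edges; fix orbit representatives $v_1,\dots,v_p$ and $e_1,\dots,e_q$ and a connected subgraph $T\subseteq X^{(1)}$ meeting every vertex-orbit (a ``fundamental domain''). Then $G$ is generated by the finitely many subgroups $G_{v_1},\dots,G_{v_p}$ — each finitely generated, being finitely presented — together with finitely many ``edge elements'' $g_{e_1},\dots,g_{e_q}$ recording how the edges not contained in $T$ are translated back into $T$. Connectedness of $X^{(1)}$ is precisely what forces this finite set to generate $G$.

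For \textbf{(II)} the decisive input is that simple connectivity of $X$ turns enough relations into consequences of the $2$-cells. The relators I would take are: (a) a finite presentation of each $G_{v_i}$; (b) for every edge $e_j$, the finitely many relations (finitely many because $G_{e_j}$ is finitely generated) expressing that $g_{e_j}$ conjugates the edge-stabilizer $G_{e_j}$ into the appropriate vertex-stabilizers; (c) the finitely many structural relations coming from the chosen fundamental domain $T$; and (d) for each of the finitely many $G$-orbits of $2$-cells, a single relation asserting that the attaching word of that cell is trivial. To see that this finite set presents $G$, one takes a word $w$ in the generators with $w=1$ in $G$: it traces an edge-loop in $X$, and since $X$ is simply connected this loop bounds a combinatorial van Kampen disk subdivided into finitely many $2$-cells of $X$, each lying in some orbit and hence carrying a $G$-translate of a relator of type (d). Reading the boundary off the disk rewrites $w$ as a product of conjugates of the chosen relators, modulo the relations (a)--(c) needed to handle the group elements decorating the $1$-cells. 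I expect this disk-filling bookkeeping — tracking base points, the decoration of the $1$-cells of the disk by group elements, and the fact that $G$-conjugates of the finitely many chosen $2$-cell relators are automatically consequences in any presentation — to be the main obstacle; it is the true content of the criterion.

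An equivalent, more streamlined formulation uses complexes of groups: the action $G\curvearrowright X$ with $X$ simply connected and $X/G$ finite gives a finite developable complex of groups with fundamental group $G$; gluing finite $2$-skeleta of Eilenberg--MacLane spaces $K(G_{v_i},1)$ along finite $1$-skeleta of $K(G_{e_j},1)$, together with one $2$-cell per orbit of $2$-cells, builds a finite $2$-complex with fundamental group $G$, whence $\phi(G)\geq 2$. This is of course K.~S.~Brown's criterion~\cite{Brown0}; alternatively one may extract the $n=2$ case from his general $\Fn{n}$-criterion, proved by a filtration argument, which is all that is needed here.
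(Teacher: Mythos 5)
The paper does not prove this statement: it is quoted from K.~S.~Brown \cite{Brown0} and used as a black box, so there is no in-paper argument to compare against. Your outline is the standard proof of that cited result and is correct as far as it goes: the reduction to $X^{(2)}$ (which preserves connectivity, $\pi_1$, and the vertex- and edge-stabilizers), the generation statement extracted from the connected $1$-skeleton via finitely many finitely generated vertex-stabilizers and finitely many edge elements, and a finite relator set of types (a)--(d) whose sufficiency is certified by filling a null-homotopic edge-loop with a van Kampen disk built from $G$-translates of the finitely many orbit representatives of $2$-cells. Two points deserve flagging. First, in a general CW-complex the attaching maps of $2$-cells are not edge-loops, so the combinatorial disk-filling needs a cellular-approximation step (or the blanket assumption that $X$ is a $G$-CW or simplicial complex; in the paper's application $X = CC(\scrH(n,R))$ is simplicial, so this is automatic). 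Second, the ``bookkeeping'' you defer --- setting up the dictionary between words in the chosen generators and decorated edge-paths in $X$, and checking that reading off the boundary of the disk only ever invokes relators of types (a)--(d) --- is the actual content of Brown's Theorem~1 in \cite{Brown0}; your proposal correctly identifies it but does not carry it out, so as written it is a faithful reduction to the literature rather than a self-contained proof. That is exactly the status the statement has in the paper, which invokes it (together with Lemma~\bref{acaonub}, Proposition~\bref{stabilizersareF2}, Proposition~\bref{Holz's} and Theorem~\bref{AbelsHolzThmCC}) to conclude $\phi(\mbA_n(R)) \geq 2$; your closing remark that only the $\Fn{2}$ case of Brown's general criterion is needed is also accurate.
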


\subsection{A space for \texorpdfstring{$\mbA_n(R)$}{An(R)}}

Recall that a covering of a set $X$ is a collection of subsets $\set{X_\lambda}_{\lambda \in \Lambda}$ of $X$ whose union is the whole of $X$, i.e. $X = \cup_{\lambda \in \Lambda} X_\lambda$. The \emph{nerve of the covering} $\set{X_\lambda}_{\lambda \in \Lambda}$ is the simplicial complex ${N(\set{X_\lambda}_{\lambda \in \Lambda})}$ defined as follows. Its vertices are the sets $X_\lambda$ for $\lambda \in \Lambda$, and $k+1$ vertices $X_{\lambda_0}, X_{\lambda_1}, \ldots, X_{\lambda_k}$ span a $k$-simplex whenever the intersection of all such $X_{\lambda_i}$ is non-empty, i.e. $\cap_{i=0}^k X_{\lambda_i} \neq \leer$.

In~\cite{Holz,AbelsHolz}, Holz and Abels investigate nerve complexes attached to groups as follows. Fixing a family of subgroups, they take the nerve of the covering of the group by all cosets of subgroups of the given family. (Such spaces are also called \emph{coset posets} or \emph{coset complexes} in the literature.) More precisely, given a group $G$ and a family $\scrH = \set{H_\lambda}_{\lambda \in \Lambda}$ of subgroups of $G$, let $\mf{H}$ denote the covering $\mf{H} = \set{gH \mid g \in G, \, H \in \scrH}$ of $G$ by all left cosets of all members of $\scrH$. The coset complex ${CC(\scrH)}$ is defined as the nerve of the covering $N(\mf{H})$. In particular, if the family $\scrH$ is finite, then $CC(\scrH)$ is $(\vert\scrH\vert-1)$-dimensional.

Inspiration for the above came primarily from the theory of buildings: if $G$ is a group with a BN-pair $(G,B,N,S)$, then the coset poset $CC(\scrH)$ associated to the family $\scrH$ of all maximal standard parabolic subgroups of $G$ is by definition the building $\Delta(G,B)$ associated to the system $(G,B,N,S)$; see~\cite[Section~6.2]{AbraBrown} and \cite{BenjaminCM}. As it turns out, coset complexes show up in many other contexts, such as Deligne complexes~\cite{CharneyDavisKpi1Hyperplanes}, Bass--Serre theory~\cite{SerreTrees}, $\Sigma$-invariants of right-angled Artin groups~\cite{MMvW}, and higher generating families of braid groups~\cite{BFMWZ, BuxBraided} and automorphism groups of free groups~\cite{BenjaminBetween,BenjaminCM}.

Since vertices of $CC(\scrH)$ are cosets in the group $G$, it follows that $G$ acts naturally on $CC(\scrH)$ by cell-permuting homeomorphisms, namely the action induced by left multiplication on the cosets $gH$ for $g \in G$ and $H \in \scrH$.

Going back to Abels' groups, consider the following $\Z$-subschemes of $\mbA_n$.
\[
 H_1 = 
 \left( \begin{smallmatrix}
        1 & * & \cdots & *  & 0 \\
        0 & * & \ddots & \vdots  & \vdots \\
        \vdots & \ddots & \ddots  & * & \vdots \\
        0 &  & 0 & * & 0 \\
        0 & 0 & \cdots & 0  & 1
        \end{smallmatrix} \right),
        \, 
 H_2 =
  \left( \begin{smallmatrix}
        1 & 0 & \cdots & \cdots & 0 \\
        0 & * & * & \cdots  & * \\
        \vdots & \ddots & \ddots  & * & \vdots \\
        0 &  & 0 & * & * \\
        0 & 0 & \cdots & 0  & 1
        \end{smallmatrix} \right),
        \, 
 H_3 = 
  \left( \begin{smallmatrix}
        1 & * & 0 & \cdots & 0 \\
        0 & * & 0 & \ddots  & \vdots \\
        \vdots & \ddots & \ddots  & 0 & 0 \\
        0 &  & 0 & * & * \\
        0 & 0 & \cdots & 0  & 1
        \end{smallmatrix} \right).
\]
For $n = 4$ we consider, in addition, the following subscheme.
\[
 H_4 = 
 \begin{pmatrix}
         1 & 0 & * & 0 \\
         0 & * & * & * \\
         0 & 0 & * & 0 \\
         0 & 0 & 0 & 1
        \end{pmatrix} \leq \mbA_4.
\]
The unipotent radicals of the matrix groups above---i.e. the intersections of each $H_i$ with the group $\mbU_n$ of upper unitriangular matrices---are examples of group schemes arising from (maximal) contracting subgroups. 
Indeed, consider the locally compact group $\mbA_n(\K)$ for $\K$ a non-archimedean local field. In this case, each unipotent radical $\mcU_i(\K) = H_i(\K) \cap \mbU_n(\K)$ is the contracting subgroup associated to the automorphism given by conjugation by some element $t$ contained in the torus $\mbT_n(\K)$; see~\cite{Holz,Abels,UdoWillis0}. 
Holz shows~\cite[Sections~2.7.3 and~2.7.4]{Holz} that this defines a unipotent group scheme over $\Z$ depending on $t \in \mbT_n(\K)$. Following Abels we call the schemes $H_i$ above \emph{horospherical} and their unipotent radicals \, $\mcU_i = H_i \cap \mbU_n$ \emph{contracting} subgroups of $\mbA_n$.

For $R$ a commutative ring with unity and $n \geq 4$, let $\scrH(n, R)$ denote the family of groups of $R$-points of horospherical subgroups of $\mbA_n(R)$, i.e. 
\[
 {\scrH(n,R)} =
 \begin{cases}
  \set{H_1(R), \, H_2(R), \, H_3(R), \, H_4(R)}, & \mbox{if } n = 4;\\
  \set{H_1(R), \, H_2(R), \, H_3(R)} & \mbox{otherwise}.
 \end{cases}
\]
We also let
\[
 \mf{H}(n,R) = \set{gH \mid g \in \mbA_n(R), H \in \scrH(n,R)}.
\]
In the notation above, the space we shall consider is the nerve complex
\[
 CC(\scrH(n,R)) = N(\mf{H}(n,R))
\]
associated to the covering of $\mbA_n(R)$ by the left cosets $\mf{H}(n,R)$ of the horospherical subgroups listed above. As mentioned previously, the group $\mbA_n(R)$ acts on the simplicial complex $CC(\scrH(n,R))$ by cell-permuting homeomorphisms via left multiplication.

The space $CC(\scrH(n,R))$ has many useful features. Some of the facts we are about to list here actually hold more generally for arbitrary coset complexes. In the case of the following lemma, the properties that are particular to our groups come from the facts that the chosen family $\scrH(n,R)$ is finite and that the group $\mbA_n(R)$ is a split extension $\mbA_n(R) = \mbU_n(R) \rtimes \mbT_n(R)$ such that the contracting subgroups \, $\mcU_i(R) = H_i(R) \cap \mbU_n(R)$ (as well as intersections of contracting subgroups) are all $\mbT_n(R)$-invariant.

\begin{lem} \label{acaonub}
 The complex $CC(\scrH(n,R))$ is colorable and homogeneous, and the given $\mbA_n(R)$-action is type-preserving and cocompact. Any cell-stabilizer is isomorphic to a finite intersection of members of $\scrH(n,R)$.
\end{lem}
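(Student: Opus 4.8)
The plan is to verify each assertion directly from the definitions, exploiting the two structural features highlighted just before the lemma: that $\scrH(n,R)$ is a \emph{finite} family, and that $\mbA_n(R)=\mbU_n(R)\rtimes\mbT_n(R)$ with all the contracting subgroups $\mcU_i(R)=H_i(R)\cap\mbU_n(R)$ (and their pairwise intersections) being $\mbT_n(R)$-invariant. First I would establish colorability. Since the vertices of $CC(\scrH(n,R))$ are the cosets $gH_i(R)$, I would define the color of a vertex to be the index $i\in\{1,2,3\}$ (or $\{1,2,3,4\}$ when $n=4$) of the horospherical subgroup it is a coset of; this is well-defined provided distinct members of $\scrH(n,R)$ are never equal, which is clear from their explicit matrix descriptions, and no two cosets of the \emph{same} $H_i(R)$ can meet, so adjacent vertices receive distinct colors. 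Because $\mbA_n(R)$ acts by left multiplication, $g\cdot(g'H_i(R))=gg'H_i(R)$ preserves the index $i$, so the action is type-preserving (colors are permuted trivially). Homogeneity—that every simplex is contained in a top-dimensional one—follows from the observation that for each $i$ and each $g$ one can always complete a given simplex: concretely, if $\cap_{i\in I}g_iH_i(R)\neq\leer$ for some index set $I$, picking any $h$ in that intersection shows all the cosets $hH_i(R)$, $i\in I$, coincide with the $g_iH_i(R)$, and then $\{hH_j(R)\}_{j}$ over \emph{all} colors $j$ is a maximal simplex containing the original one since the $hH_j(R)$ all contain $h$. Thus every simplex extends to one using all of $\scrH(n,R)$, so $CC(\scrH(n,R))$ is homogeneous of dimension $\vert\scrH(n,R)\vert-1$.

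Next I would treat cocompactness. The maximal simplices are exactly the sets $\{gH_1(R),\ldots,gH_m(R)\}$ for $g\in\mbA_n(R)$, where $m=\vert\scrH(n,R)\vert$—this uses that $\cap_i H_i(R)$ is nonempty (it contains the identity), so the "standard" simplex at $g=1$ really is a simplex, and the argument of the previous paragraph shows every maximal simplex has this shape. Hence $\mbA_n(R)$ acts transitively on maximal simplices, so there are finitely many orbits of cells of every dimension, giving cocompactness. Finally, for the stabilizer statement: the stabilizer of a vertex $gH_i(R)$ is $gH_i(R)g^{-1}$, and more generally the stabilizer of a simplex with vertices $g_{i_0}H_{i_0}(R),\ldots,g_{i_k}H_{i_k}(R)$ fixes each of those cosets (since the action is type-preserving, it cannot permute them), hence equals $\bigcap_{j}g_{i_j}H_{i_j}(R)g_{i_j}^{-1}$; after translating by a common element $h$ lying in the corresponding coset intersection—which exists because the vertices span a simplex—this becomes $h\big(\bigcap_j H_{i_j}(R)\big)h^{-1}$, a conjugate of a finite intersection of members of $\scrH(n,R)$, and in particular isomorphic to such an intersection.

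The main obstacle I expect is not any single deep point but rather the bookkeeping needed to make the homogeneity and stabilizer claims airtight—specifically, checking that a simplex $\{g_{i_j}H_{i_j}(R)\}_j$ can always be "recentered" at a single group element $h$ with $g_{i_j}H_{i_j}(R)=hH_{i_j}(R)$ for all $j$. This is precisely the statement that $\cap_j g_{i_j}H_{i_j}(R)\neq\leer$, which is the defining condition for the vertices to span a simplex, so it is immediate; the only care required is to note that once such an $h$ is chosen, completing to a maximal simplex genuinely works because $\bigcap_{i=1}^{m}H_i(R)\ni 1$, so that $\bigcap_{i=1}^m hH_i(R)\ni h$ and all $m$ colors are realized. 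The role of $\mbT_n(R)$-invariance of the contracting subgroups, emphasized in the paragraph preceding the lemma, is what guarantees that the relevant intersections of horospherical subgroups are themselves well-behaved (in particular genuinely group schemes over $\Z$, per Holz), which is implicitly what makes "a finite intersection of members of $\scrH(n,R)$" a meaningful and tractable object in the later finiteness arguments; for the present lemma, however, it suffices to use it to know these intersections are nontrivial and closed under the operations performed above.
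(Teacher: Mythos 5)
Your proposal is correct and follows essentially the same route as the paper: homogeneity and the recentering of a simplex at a common element $h$ both come down to the fact that a nonempty intersection of cosets is a coset of the intersection of the subgroups, cocompactness follows from transitivity on the (finitely many types of) maximal simplices, and type-preservation forces a cell stabilizer to fix each vertex, giving a conjugate of a finite intersection of members of $\scrH(n,R)$. The only cosmetic difference is that the paper does not invoke the $\mbT_n(R)$-invariance of the contracting subgroups here at all (it is only needed later, for Corollary~\bref{corollaryHolz}), whereas you gesture at it; as you correctly note, nonemptiness of the intersections already follows from their containing the identity.
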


\begin{proof}
Since the intersection of cosets in a group is a coset of the intersection of the underlying subgroups, it follows that $CC(\scrH(n,R))$ is \emph{homogeneous}. That is to say, every simplex is contained in a simplex of dimension $k = \vert \scrH(n,R) \vert-1$ and every maximal simplex has dimension exactly $k$. (Note that $CC(\scrH(n,R))$ is even a chamber complex if $n \geq 5$.) We observe that $CC(\scrH(n,R))$ is colored, with \emph{types} (or \emph{colors}) given precisely by the family of subgroups $\scrH(n,R)$. Also, the given action of $\mbA_n(R)$ on $CC(\scrH(n,R))$ is type-preserving and transitive on the set of maximal simplices. Thus, the maximal simplex given by the intersection 
\[\bigcap\limits_{H \in \scrH(n,r)} H \]
is a fundamental domain for the $\mbA_n(R)$-action. In particular, since $\vert\scrH(n,R)\vert$ is finite, it follows that the action of $\mbA_n(R)$ is cocompact.

The stabilizers of the $\mbA_n(R)$-action are also not difficult to determine. For instance, given a maximal simplex $\sigma = \set{g_1 H_1(R), \, g_2 H_2(R), \, g_3 H_3(R)}$ in $CC(\scrH(n,R))$ with $n \geq 5$, there exists $g \in \mbA_n(R)$ such that $\sigma = g \cdot \set{H_1(R), \, H_2(R), \, H_3(R)}$. A group element $h \in \mbA_n(R)$ fixes $\sigma$ if and only if $h \in g \cdot (H_1(R) \cap H_2(R) \cap H_3(R)) \cdot g^{-1}$. A similar argument shows that a cell-stabilizer of $CC(\scrH(n,R))$ for any $n \geq 4$ is a conjugate of some intersection of subgroups that belong to the family $\scrH(n,R)$.
\end{proof}

The existence of a single simplex as fundamental domain, and the fact that cell-stabilizers are conjugates of finite intersections of members of a fixed family of subgroups, are properties that characterize coset complexes; see e.g.~\cite[Observation~A.4 and Proposition~A.5]{BFMWZ}.

Having determined the cell-stabilizers, we now prove that they are finitely presented whenever we need them to be.

\begin{pps} \label{stabilizersareF2}
 Suppose $\BzeroR$ is finitely presented and $n \geq 4$. Then any finite intersection of members of $\scrH(n,R)$ is finitely presented.
\end{pps}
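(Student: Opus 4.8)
The plan is to make the finitely many intersections in question completely explicit, sort them into three structural families, and then estimate their finiteness length family by family --- using the short exact sequence bounds of Lemma~\bref{obviousboundsonphi} for the ``scaffolding'' and the Bieri--Strebel $\Sigma$-invariant for the genuinely metabelian pieces. First I would note that, since an intersection of cosets in a group is a coset of the intersection of the underlying subgroups and each $H_i$ is a $\Z$-subscheme, every finite intersection of members of $\scrH(n,R)$ is the group of $R$-points of one of the finitely many $\Z$-subschemes $\bigcap_{i\in I}H_i$, with $I\subseteq\set{1,2,3}$ (or $I\subseteq\set{1,2,3,4}$ when $n=4$), and that the relations~\eqref{Ancommutators} and~\eqref{SteinbergRelGLn} allow one to compute each such group by hand. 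They fall into three families. \emph{(A)} Groups isomorphic to a finite direct product whose factors are each either finitely generated abelian, or commensurable with $\BzeroR$, or of the shape $\mbB_2(R)$: for example $H_3(R)\cong\Aff(R)\times\Mult(R)^{n-4}\times\Aff_-(R)$, as well as $H_1(R)\cap H_3(R)$, $H_2(R)\cap H_3(R)$, $H_1(R)\cap H_2(R)\cap H_3(R)$, and --- when $n=4$ --- $H_1(R)\cap H_2(R)$ and all remaining higher intersections. \emph{(B)} The groups $H_1(R)$ and $H_2(R)$: each is isomorphic to the central quotient $\PmbB_{n-1}(R)=\mbB_{n-1}(R)/Z(\mbB_{n-1}(R))$, via $g\mapsto g_{11}^{-1}g$ and via the analogous map scaling the last diagonal entry respectively, and $H_1(R)\cap H_2(R)\cong\mbB_{n-2}(R)$. \emph{(C)} (Only for $n=4$.) The metabelian groups $H_4(R)\cong\Addi(R)^{3}\rtimes\Mult(R)^{2}$, together with $H_1(R)\cap H_4(R)$, $H_2(R)\cap H_4(R)\cong\Addi(R)^{2}\rtimes\Mult(R)^{2}$, where the torus acts on the abelian part summand by summand through the characters one reads off directly from~\eqref{SteinbergRelGLn}.

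Family (A) is then immediate from the hypothesis $\phi(\BzeroR)\geq2$: the affine groups are commensurable with $\BzeroR$ by Lemma~\bref{newlemma}, $\phi(\mbB_2(R))=\phi(\BzeroR)$ by Lemma~\bref{BorelGLSL}, $\Mult(R)$ is finitely generated because $\BzeroR$ is finitely presented, and $\phi$ of a finite direct product is the minimum of the finiteness lengths of the factors by Lemma~\doublebref{obviousboundsonphi}{obphi4}. For family (B) I would first use that $Z(\mbB_m(R))=\set{\lambda\mb{1}_m\mid\lambda\in R^\times}\cong\Mult(R)$ is finitely generated, so $\phi(\PmbB_m(R))=\phi(\mbB_m(R))$ by Lemma~\doublebref{obviousboundsonphi}{obphi3}; it then suffices to prove $\phi(\mbB_m(R))\geq2$ for all $m\geq2$, which I would do by induction on $m$. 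The base $m=2$ is Lemma~\bref{BorelGLSL}; for the step, one writes $\mbB_m(R)=N_m\rtimes\mbB_{m-1}(R)$ with $N_m\trianglelefteq\mbB_m(R)$ the normal subgroup of matrices differing from $\mb{1}_m$ only in the last column, so that $N_m\cong\Addi(R)^{m-1}\rtimes\Mult(R)$, and one invokes Lemma~\doublebref{obviousboundsonphi}{obphi0} with the inductive bound $\phi(\mbB_{m-1}(R))\geq2$ and with $\phi(N_m)\geq2$. Since the groups $N_m$ are of family (C) type, everything now reduces to (C).

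For a group $G=A\rtimes Q$ of family (C) type --- this includes each $N_m$ --- the group $Q$ is finitely generated abelian and $A$ is a finite direct sum of twists $R_\chi$ of the $\Z[R^\times]$-module $R$ along characters $\chi\colon Q\to R^\times$, so the plan is to apply the Bieri--Strebel criterion~\cite{BieriStrebel}: $G$ is finitely presented if and only if the complement $\Sigma^c_A(Q)$ of its $\Sigma$-invariant in the character sphere $S(Q)$ contains no pair of antipodal points. Here $\Sigma^c_A(Q)$ is the union of the sets $\Sigma^c_{R_\chi}(Q)$ over the summands, and each $\Sigma^c_{R_\chi}(Q)$ is the image of $\Sigma^c_R(R^\times)$ under the great-subsphere embedding $S(R^\times)\hookrightarrow S(Q)$ dual to $\chi$. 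The characters that actually occur have pairwise complementary kernels --- for $H_4(R)$, for instance, they are $v\mapsto v^{-1}$, $(u,v)\mapsto uv^{-1}$ and $(u,v)\mapsto u$, and any two of these kernels generate $\Mult(R)^2$ --- so the corresponding great subspheres are pairwise disjoint; hence any antipodal pair in $\Sigma^c_A(Q)$ would have to lie inside a single summand and would therefore be an antipodal pair in $\Sigma^c_R(R^\times)$. But $\Sigma^c_R(R^\times)$ contains no antipodal pair, since that is exactly the Bieri--Strebel condition for finite presentability of $\Aff(R)=\Addi(R)\rtimes\Mult(R)$, which holds because $\Aff(R)$ is commensurable with $\BzeroR$ by Lemma~\bref{newlemma}. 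Thus $\phi(G)\geq2$, which closes the argument.

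The hard part will be family (C): identifying the twisting characters correctly and, above all, verifying the chain ``pairwise complementary kernels $\Rightarrow$ pairwise disjoint great subspheres $\Rightarrow$ no cross-summand antipodal pair'', since this is precisely where the structure of $R$ as a module over $\Z[R^\times]$ and the hypothesis on $\BzeroR$ enter. A smaller but essential subtlety in family (B) is that $H_1(R)$ and $H_2(R)$ must be handled as central \emph{quotients} of $\mbB_{n-1}(R)$ rather than as retracts of it (a retraction would give the inequality the wrong way round), and that the extension $\mbB_m(R)=N_m\rtimes\mbB_{m-1}(R)$ is genuinely split so that Lemma~\doublebref{obviousboundsonphi}{obphi0} may be applied; beyond these points the work is routine bookkeeping with the elementary matrix relations~\eqref{Ancommutators},~\eqref{SteinbergRelGLn} and the estimates of Lemma~\bref{obviousboundsonphi}.
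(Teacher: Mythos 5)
Your proposal is correct, and it reaches the paper's conclusion by a route that is partly the same and partly genuinely different. The common core is the Bieri--Strebel machinery: the paper isolates the normal ``last-column'' subgroup $\mc{C}_{n-1}\cong\Addi(R)^{n-2}\rtimes\Mult(R)$ of $H_1$, deduces its finite presentability from tameness of $\Addi(R)$ over $\Z[\Mult(R)]$ (your family (C) in the degenerate case where all twisting characters coincide), and then inducts on the quotient $Q_{n-1}$; your induction $\mbB_m(R)=N_m\rtimes\mbB_{m-1}(R)$ together with the identification $H_1(R)\cong\mbB_{n-1}(R)/\mb{Z}_{n-1}(R)$ is the same argument in different packaging. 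The genuine divergence is at $H_4(R)$: the paper realizes it as a fiber product and invokes the asymmetric 1-2-3 Theorem~\cite{BHMS3}, whereas you observe that $H_4(R)\cong\Addi(R)^3\rtimes\Mult(R)^2$ is itself metabelian and apply Bieri--Strebel directly, ruling out antipodal pairs via the disjointness of the great subspheres carrying the sets $\Sigma^c_{R_{\chi_i}}$. Your treatment is more uniform (a single criterion handles every stabilizer) and avoids importing the 1-2-3 Theorem; the step you flag as delicate does go through, because each character occurring is surjective onto $\Mult(R)$ and $R$ is finitely generated over $\Z[R^\times]$ by the hypothesis on $\BzeroR$, and these two facts are exactly what confine $\Sigma^c_{R_\chi}$ to the great subsphere dual to $\chi$.

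Two small corrections. The assertion that ``the characters that actually occur have pairwise complementary kernels'' fails for $N_m\cong\Addi(R)^{m-1}\rtimes\Mult(R)$: there all characters coincide, so the great subspheres are all of $S(\Mult(R))$ and are certainly not disjoint. The conclusion survives because the union of the complements over the summands is then a single copy of $\Sigma^c_R(R^\times)$, which has no antipodal pair; but this degenerate case should be treated separately from $H_4(R)$, $H_1(R)\cap H_4(R)$ and $H_2(R)\cap H_4(R)$. Also, $Z(\mbB_m(R))$ may be strictly larger than the scalar subgroup for degenerate $R$; the kernel of your map $g\mapsto g_{11}^{-1}g$ is precisely the scalar subgroup $\mb{Z}_m(R)\cong\Mult(R)$, which is what Lemma~\doublebref{obviousboundsonphi}{obphi3} requires, so the argument is unaffected once the notation is fixed.
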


\begin{proof}
We shall prove that, under the given assumption, the vertex-stabilizers are finitely presented. It will be clear from the arguments below that the same holds for stabilizers of higher dimensional cells. We remind the reader that the multiplicative group $\Mult(R)$, which is a retract of $\BzeroR$, is finitely presented because $\BzeroR$ itself is so; cf. Section~\ref{prelim}. In particular, $\mbT_n(R)$ is also finitely presented.

By Lemma~\ref{acaonub} we need only show that the members of $\scrH(n,R)$ are finitely presented. By the commutator~\eqref{Ancommutators} and diagonal relations~\eqref{SteinbergRelGLn}, we see that the `last-column subgroup' $\mc{C}_{n-1}$ of $H_1$, given by
\[
 \mc{C}_{n-1} =  
\left( \begin{smallmatrix}
        1 & 0 & \cdots & 0 & * & 0 \\
				0 & 1 & \ddots & \vdots & \vdots & \vdots \\
				\vdots & \ddots & \ddots & 0 & \vdots & \vdots \\
				\vdots &  & \ddots & 1 & * & \vdots \\
				 &  & & 0 & * & 0 \\
				0 & \cdots & & \cdots & 0 & 1
\end{smallmatrix} \right),
\]
is normal in $H_1$. The quotient $H_1 / \mc{C}_{n-1}$ is isomorphic to the subgroup
\[
 Q_{n-1} =  
\left( \begin{smallmatrix}
        1 & * & \cdots & * & 0 & 0 \\
				0 & * & \ddots & \vdots & \vdots & \vdots \\
				\vdots & \ddots & \ddots & * & \vdots & \vdots \\
				\vdots &  & \ddots & * & 0 & \vdots \\
				 &  & & 0 & 1 & 0 \\
				0 & \cdots & & \cdots & 0 & 1
\end{smallmatrix} \right) \leq H_1.
\]

We claim that the column subgroup $\mc{C}_{n-1}$ is itself finitely presented. To check this, let us briefly recall some concepts from the $\Sigma$-theory of Bieri--Strebel~\cite{BieriStrebel}. Given an abelian group $Q$ and a homomorphism $v : Q \to (\R,+)$, denote by $Q_v$ the monoid $Q_v = \set{q \in Q \mid v(q) \geq 0}$. We say that a $\Z[Q]$-module $A$ is \emph{tame} when $A$ is finitely generated over $\Z[Q_v]$ or over $\Z[Q_{-v}]$ (possibly both) for every homomorphism $v : Q \to (\R,+)$. One of the main results of~\cite{BieriStrebel} states that a finitely generated group $A \rtimes Q$ (with $A$ and $Q$ abelian) is finitely presented if and only if $A$ is a tame $\Z[Q]$-module. 

Going back to our proof, since $\BzeroR$ is assumed to be finitely presented, then so is the group 
\[\Aff_-(R) \cong \Addi(R) \rtimes_{-1} \Mult(R) \cong \left( \begin{smallmatrix} 1 & * \\ 0 & * \end{smallmatrix} \right) \leq \GL_2(R)\] 
by Lemma~\ref{newlemma}. In particular, the $\Z[\Mult(R)]$-module $\Addi(R)$ with the given action
\[
\Mult(R) \times \Addi(R) \ni (u,r) \mapsto u^{-1} r
\]
is tame by~\cite[Theorem~5.1]{BieriStrebel}. But $\mc{C}_{n-1}$ is seen to be isomorphic to $(\Addi(R))^{n-2} \rtimes \Mult(R)$, where the action of $\Mult(R)$ on each copy of $\Addi(R)$ is the reverse multiplication as above, and the action $\Mult(R) \curvearrowright \Addi(R)^{n-2}$ is just the diagonal action. Thus by~\cite[Proposition~2.5(i)]{BieriStrebel} it follows that $\Addi(R)^{n-2}$ is also a tame $\Z[\Mult(R)]$-module, whence one has---again by~\cite[Theorem~5.1]{BieriStrebel}---that $\mc{C}_{n-1} \cong \Addi(R)^{n-2} \rtimes \Mult(R)$ is finitely presented. 

We have shown that $H_1$ fits into a (split) short exact sequence
\[
 \mc{C}_{n-1} \into H_1 \onto Q_{n-1}
\]
where $\mc{C}_{n-1}$ is finitely presented. Decomposing $Q_{n-1}$ similarly via the last column, as we did with $H_1$, a simple induction on $n$ shows that $Q_{n-1}$ is also finitely presented. Lemma~\doubleref{obviousboundsonphi}{obphi0} thus shows that $H_1$ is finitely presented.

By considering the `first-row subgroup'
\[
 \mc{R}_{n-1} =  
\left( \begin{smallmatrix}
        1 & 0 &  & \cdots &  & 0 \\
				0 & * & * & \cdots & * & * \\
				\vdots & 0 & 1 & 0 & 0 & 0 \\
				\vdots &  & \ddots & \ddots & \ddots & \vdots \\
				 &  & & 0 & 1 & 0 \\
				0 & \cdots & & \cdots & 0 & 1
\end{smallmatrix} \right)
\]
of $H_2$, which is also normal by~\eqref{Ancommutators} and~\eqref{SteinbergRelGLn}, an argument analogous to the previous one, now using $\Aff(R) \cong \Addi(R) \rtimes \Mult(R) \cong \left( \begin{smallmatrix} * & * \\ 0 & 1 \end{smallmatrix} \right) \leq \GL_2(R)$, shows that $H_2$ is also finitely presented.

The case of $H_3$ is more straightforward since it equals the direct product
\[
\left( \begin{smallmatrix}
        1 & * & 0 & \cdots & 0 & 0 \\
				0 & * & 0 & \ddots & \vdots & \vdots \\
				\vdots & \ddots & \ddots & 0 & \vdots & \vdots \\
				\vdots &  & \ddots & * & 0 & \vdots \\
				 &  & & 0 & 1 & 0 \\
				0 & \cdots & & \cdots & 0 & 1
\end{smallmatrix} \right)
\times
\left( \begin{smallmatrix}
        1 & 0 & \cdots & 0 & 0 & 0 \\
				0 & 1 & \ddots & \vdots & \vdots & \vdots \\
				\vdots & \ddots & \ddots & \ddots & \vdots & \vdots \\
				\vdots &  & \ddots & 1 & 0 & \vdots \\
				 &  & & 0 & * & * \\
				0 & \cdots & & \cdots & 0 & 1
\end{smallmatrix} \right) \cong \Aff_-(R) \times \Mult(R)^{n-4} \times \Aff(R)
\]
and all factors on the right-hand side are finitely presented; see Lemma~\ref{newlemma} and Section~\ref{prelim}.

Establishing finite presentability of $H_4 \leq \mbA_4(R)$ is slightly different. Consider the subgroups
\[
\Gamma_1 =
 \left( \begin{smallmatrix}
        1 & 0 & * & 0 \\
				0 & * & * & 0 \\
				0 & 0 & * & 0 \\
				0 & 0 & 0 & 1
				\end{smallmatrix} \right)
\mbox{ and }
\Gamma_2 =
 \left( \begin{smallmatrix}
        1 & 0 & 0 & 0 \\
				0 & * & 0 & * \\
				0 & 0 & 1 & 0 \\
				0 & 0 & 0 & 1
				\end{smallmatrix} \right)
\]
and let $p_1 : \Gamma_1 \onto Q$ and $p_2 : \Gamma_2 \onto Q$ denote the natural projections onto the diagonal subgroup
\[
Q = 
\left( \begin{smallmatrix}
       1 & 0 & 0 & 0 \\
			 0 & * & 0 & 0 \\
			 0 & 0 & 1 & 0 \\
			 0 & 0 & 0 & 1
			\end{smallmatrix} \right).
\]
With this notation, we have that $H_4$ is isomorphic to the fiber product
\[
P = \set{(g,h) \in \Gamma_1 \times \Gamma_2 \mid p_1(g) = p_2(h)}. \index{Fiber product}
\]
We observe now that $\Gamma_1$ and $\Gamma_2$ are finitely presented---i.e. of homotopical type $\Fn{2}$---since
\[
 \ker(p_1) =
 \left( \begin{smallmatrix}
        1 & 0 & * & 0 \\
				0 & 1 & * & 0 \\
				0 & 0 & * & 0 \\
				0 & 0 & 0 & 1
				\end{smallmatrix} \right) \cong \mc{C}_{3} \leq H_1
\, \, \mbox{ and } \, \, 
 \left( \begin{smallmatrix}
        1 & 0 & 0 & 0 \\
				0 & * & 0 & * \\
				0 & 0 & 1 & 0 \\
				0 & 0 & 0 & 1
				\end{smallmatrix} \right) \cong \Aff(R) \leq \GL_2(R)
\]
are so. (In particular, $\ker(p_1)$ is of type $\Fn{1}$.) Again, the finite presentability of $\BzeroR$ implies that the abelian group $Q \cong \Mult(R)$ is finitely generated, whence it is of type $\Fn{3}$; cf. Section~\ref{prelim}. Therefore the fiber product $P \cong H_4$ is finitely presented by the (asymmetric) 1-2-3-Theorem~\cite[Theorem~B]{BHMS3}.

Entirely analogous arguments for the non-trivial intersections of members of $\scrH(n,R)$ show that all such groups are also finitely presented, which yields the proposition.
\end{proof}

We now investigate connectivity properties of the complex $CC(\scrH(n,R))$. The following observation, whose proof we omit, is originally due to Holz~\cite{Holz}. To verify it directly, consider the homotopy equivalences given in~\cite[Theorem~1.4]{AbelsHolz}. Alternatively, a generalization has been recently obtained by B.~Br\"uck; see \cite[Theorem~3.17 and Corollary~3.18]{BenjaminBetween} for a proof.

\begin{lem}[{\cite[Korollar~5.18]{Holz}}] \label{semidirectHolz}
Let $G = N \rtimes Q$ and suppose $\scrH$ is a family of $Q$-invariant subgroups of $N$. Then there exists a homotopy equivalence between the coset complex $CC(\scrH)$ of $\scrH$ \emph{with respect to} $N$ and the coset complex $CC(\set{H \rtimes Q \mid H \in \scrH})$ \emph{with respect to overgroup} $G$.
\end{lem}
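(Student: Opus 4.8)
The plan is to exhibit an explicit \emph{simplicial isomorphism} $CC(\scrH) \cong CC(\set{H \rtimes Q \mid H \in \scrH})$; this is stronger than the asserted homotopy equivalence and is the most transparent route. The underlying idea is simply that, because $\scrH$ consists of $Q$-invariant subgroups of $N$, passing from a coset of $H$ in $N$ to the corresponding coset of $H \rtimes Q$ in $G = N \rtimes Q$ neither loses nor adds information.

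First I would set up the bijection on vertices. Since $\scrH$ consists of $Q$-invariant subgroups of $N$, each $H \rtimes Q$ is a genuine subgroup of $G$, and $(H \rtimes Q) \cap N = H$ because an element $hq$ with $h \in H \leq N$ and $q \in Q$ lies in $N$ if and only if $q = 1$. Moreover every left coset of $H \rtimes Q$ in $G$ has a representative in $N$: writing $g = mp$ with $m \in N$ and $p \in Q$, one has $g(H \rtimes Q) = m(H \rtimes Q)$ since $p \in Q \subseteq H \rtimes Q$. Combining these facts, the assignment $nH \mapsto n(H \rtimes Q)$ is a well-defined bijection from the vertex set of $CC(\scrH)$ onto the vertex set of $CC(\set{H \rtimes Q \mid H \in \scrH})$, because $nH = n'H \iff n^{-1}n' \in H \iff n^{-1}n' \in (H \rtimes Q) \cap N \iff n(H \rtimes Q) = n'(H \rtimes Q)$. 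This bijection is $N$-equivariant and type-preserving, the color bijection being $H \leftrightarrow H \rtimes Q$ (well defined since $(H \rtimes Q) \cap N = H$).

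Next I would verify that this vertex bijection carries simplices to simplices in both directions, i.e. that for $H_1, \ldots, H_k \in \scrH$ and $n_1, \ldots, n_k \in N$ one has $\bigcap_{i} n_i H_i \neq \leer$ in $N$ if and only if $\bigcap_{i} n_i (H_i \rtimes Q) \neq \leer$ in $G$. The forward implication is immediate since $N \subseteq G$. For the converse, suppose $g \in \bigcap_{i} n_i (H_i \rtimes Q)$ and write $g = mp$ with $m \in N$ and $p \in Q$. For each $i$ write $g = n_i h_i q_i$ with $h_i \in H_i$ and $q_i \in Q$; since $n_i h_i \in N$, uniqueness of the $N$-$Q$ factorization forces $m = n_i h_i$ and $p = q_i$, hence $m \in n_i H_i$ for every $i$, so $m \in \bigcap_{i} n_i H_i \neq \leer$. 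Therefore the vertex bijection is a simplicial isomorphism, which in particular is an ($N$-equivariant) homotopy equivalence, proving the lemma.

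I do not expect a serious obstacle: the only step needing care is the backward direction of the intersection equivalence, together with tracking where the hypotheses enter---$Q$-invariance of $\scrH$ is precisely what makes each $H \rtimes Q$ a subgroup of $G$, while $H \leq N$ is precisely what yields $(H \rtimes Q) \cap N = H$ and the uniqueness argument above. For completeness one may alternatively observe that the weaker homotopy-equivalence statement also follows from the homotopy equivalences of \cite[Theorem~1.4]{AbelsHolz} or from Br\"uck's more general framework \cite[Theorem~3.17 and Corollary~3.18]{BenjaminBetween}; the direct argument sketched here is shorter and gives the stronger conclusion.
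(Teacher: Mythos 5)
Your argument is correct, and it in fact establishes more than the lemma asserts: under the stated hypotheses the two coset complexes are simplicially isomorphic, not merely homotopy equivalent. Each step checks out: $Q$-invariance makes $H \rtimes Q$ a subgroup of $G$; the splitting $G = N \rtimes Q$ gives $(H \rtimes Q) \cap N = H$ and hence the $N$-equivariant vertex bijection $nH \mapsto n(H \rtimes Q)$; and the uniqueness of the factorization $g = mp$ with $m \in N$, $p \in Q$ is exactly what makes the backward direction of the intersection criterion work, so simplices correspond in both directions. The paper itself does not prove the lemma --- it cites Holz's Korollar~5.18 and points to the homotopy equivalences of Abels--Holz and to Br\"uck's more general framework, all of which are formulated in settings where only a homotopy equivalence is available or needed. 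Your direct argument buys a self-contained, purely combinatorial proof with the stronger conclusion, at essentially no extra cost; the cited machinery buys generality (it applies when the complement is not a semidirect factor or the subgroups are not all $Q$-invariant), which is not needed for the application to $\mbU_n(R) \rtimes \mbT_n(R)$ in Corollary~\ref{corollaryHolz}. The only point worth making explicit in a write-up is that distinct cosets (possibly of distinct members of the family) are distinct as subsets of the group, so the nerve's vertex set is honestly the disjoint union $\bigsqcup_{H \in \scrH} N/H$ and your color bijection $H \leftrightarrow H \rtimes Q$ is injective because $(H \rtimes Q) \cap N = H$; you already note this parenthetically.
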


For our groups, Lemma~\ref{semidirectHolz} yields the following.

\begin{cor} \label{corollaryHolz}
  Let $\scrH_u(n,R)$ denote the family of unipotent radicals $\mcU_i(R) = H_i(R) \cap \mbU_n(R)$ of members of $\scrH(n,R)$ and consider the following covering of $\mbU_n(R)$.
  \[
   \mf{H}_u(n,R) = \set{v \cdot \mcU(R) \mid v \in \mbU_n(R) \mbox{ and } \, \mcU(R) \in \scrH_u(n,R)}.
  \]
Then the spaces $CC(\scrH_u(n,R)) = N(\mf{H}_u(n,R))$ (with respect to $\mbU_n(R)$) and $CC(\scrH(n,R))$ (with respect to $\mbA_n(R)$) are homotopy equivalent.
\end{cor}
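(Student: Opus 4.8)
The plan is to obtain Corollary~\bref{corollaryHolz} as a direct application of Lemma~\bref{semidirectHolz} after choosing the appropriate group decomposition. I would take $N = \mbU_n(R)$, $Q = \mbT_n(R)$ and $G = \mbA_n(R)$, using the semidirect product decomposition $\mbA_n(R) = \mbU_n(R) \rtimes \mbT_n(R)$ recalled at the beginning of the section, and let the family $\scrH$ in the statement of Lemma~\bref{semidirectHolz} be $\scrH_u(n,R) = \set{\mcU_i(R) = H_i(R) \cap \mbU_n(R)}$, the family of contracting subgroups.

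First I would verify the hypothesis of Lemma~\bref{semidirectHolz}, namely that each $\mcU_i(R)$ is a $\mbT_n(R)$-invariant subgroup of $\mbU_n(R)$. This was already recorded in the paragraph preceding Lemma~\bref{acaonub}, but in any case it follows at once from the diagonal relations~\bbref{SteinbergRelGLn}: conjugating an elementary matrix $\eij(r)$ by an element of $\mbT_n(R)$ only rescales the off-diagonal entry, so the pattern of admissible nonzero positions that defines each contracting subgroup $\mcU_i(R)$ is preserved under $\mbT_n(R)$-conjugation.

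The key identification is then that, for each $i$, one has $H_i(R) = \mcU_i(R) \rtimes \mbT_n(R)$. By inspection of the explicit matrix forms of $H_1, H_2, H_3$ (and of $H_4$ in the case $n = 4$), every $H_i$ contains the full torus $\mbT_n$—its diagonal entries run through $1, *, \ldots, *, 1$, exactly as in $\mbT_n$—while $\mcU_i(R) = H_i(R) \cap \mbU_n(R)$ is precisely the unipotent radical of $H_i(R)$, which is normal in $H_i(R)$ by~\bbref{Ancommutators} and~\bbref{SteinbergRelGLn} and is complemented by $\mbT_n(R)$. Consequently the family $\set{\mcU(R) \rtimes \mbT_n(R) \mid \mcU(R) \in \scrH_u(n,R)}$ coincides with $\scrH(n,R) = \set{H_i(R)}$. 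Feeding these choices into Lemma~\bref{semidirectHolz} yields a homotopy equivalence between $CC(\scrH_u(n,R))$ taken with respect to $\mbU_n(R)$ and $CC(\set{\mcU_i(R) \rtimes \mbT_n(R)}) = CC(\scrH(n,R))$ taken with respect to $\mbA_n(R)$, which is exactly the claim.

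I do not anticipate a real obstacle here: the statement is essentially a translation of Holz's lemma into the notation of Abels' groups. The only point requiring care is the bookkeeping in the identification step—checking, uniformly in $n \geq 4$ (and for the extra vertex $H_4$ when $n = 4$), that each horospherical subgroup genuinely contains the whole torus $\mbT_n$ and splits as the asserted semidirect product over its unipotent radical. This is a routine verification on the matrix descriptions and uses nothing beyond the commutator and diagonal relations~\bbref{Ancommutators} and~\bbref{SteinbergRelGLn}.
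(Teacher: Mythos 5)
Your proposal is correct and is exactly the paper's argument: the paper proves the corollary by applying Lemma~\bref{semidirectHolz} with $N=\mbU_n(R)$, $Q=\mbT_n(R)$, $\scrH=\scrH_u(n,R)$, noting that the diagonal relations~\bbref{SteinbergRelGLn} give $\mbT_n(R)$-invariance of each $\mcU_i(R)$ and that $H_i(R)=\mcU_i(R)\rtimes\mbT_n(R)$ since every horospherical subgroup contains the full torus. Your write-up only spells out more explicitly the identification step that the paper leaves implicit.
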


\begin{proof}
 This follows at once from Lemma~\ref{semidirectHolz} since the $\mbT_n(R)$-action by conjugation preserves each $\mcU_i(R)$ by the diagonal relations~\eqref{SteinbergRelGLn}.
\end{proof}

Thus, to show that $CC(\scrH(n,R))$ is connected and simply-connected, it suffices to prove that the coset complex $CC(\scrH_u(n,R))$ of contracting subgroups, with cosets taken \emph{in the unipotent radical} $\mbU_n(R)$, is connected and simply-connected. To do so we take advantage of the algebraic meaning of connectivity properties of coset complexes discovered by Tits~\cite{TitsAmalgams}.

Recall that the colimit $\mathrm{colim}\, F$ of a diagram $F : I \to \mathrm{Grp}$ from a small category $I$ to the category of groups is a group $K$ together with a family of maps $\Psi = \set{\psi_O : F(O) \to K}_{O \in \mathrm{Obj}(I)}$ satisfying the following properties.

\begin{itemize}
 \item $\psi_P \circ F(f) = \psi_O$ for all $f \in \Hom(O, P)$;
 \item If $(K', \Psi')$ is another pair also satisfying the conditions above, then there exists a unique group homomorphism $\phee : K \to K'$ such that $\phee \circ \psi_O = \psi_O'$ for all $O \in \mathrm{Obj}(I)$.
\end{itemize}

In this case we write $K = \mathrm{colim}\, F$, omitting the maps $\Psi$. 
Now suppose $\scrH$ is a family of subgroups of a given group. This induces a diagram $F_\scrH : I_\scrH \to \mathrm{Grp}$ by defining the category $I_\scrH$ to be the poset given by members of $\scrH$ and their pairwise intersections, ordered by inclusion. For example, if $\scrH = \set{A,B}$ with $A, B \leq G$ and $C = A \cap B$, then $F_\scrH$ is just the usual diagram $A \hookleftarrow C \hookrightarrow B$ 
and thus the colimit $\mathrm{colim}\, F_\scrH$ is simply the push-out (or amalgamated product) $\mathrm{colim}\, F_\scrH = A \ast_C B$.

\begin{thm}[{\cite{TitsAmalgams}; see also \cite[Theorem~2.4]{AbelsHolz}}] \label{AbelsHolzThmCC} 
 Let $\scrH$ be a family of subgroups of a group $G$ and let $\pi : \mathrm{colim}\, F_\scrH \to G$ denote the natural map from the colimit of $F_\scrH$ to $G$. Then the coset complex $CC(\scrH)$ is connected if and only if $\pi$ is surjective, and $CC(\scrH)$ is additionally simply-connected if and only if $\pi$ is an isomorphism.
\end{thm}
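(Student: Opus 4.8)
The statement is a higher-dimensional incarnation of Bass--Serre theory, and I would prove it along those lines. Write $X = CC(\scrH)$ and $\widehat{G} = \mathrm{colim}\, F_\scrH$. Since $1\in\bigcap_{H\in\scrH}H$, the members of $\scrH$ span a simplex $\Delta$ of $X$, and for the $G$-action by left multiplication on cosets $\Delta$ is a \emph{strict fundamental domain}: the stabilizer of the vertex $H$ is $H$ itself, the stabilizer of a face $\{H_{i_0},\dots,H_{i_k}\}$ is $H_{i_0}\cap\dots\cap H_{i_k}$, and every simplex of $X$ is a unique $G$-translate of a face of $\Delta$. Note also that the image of $\pi$ is the subgroup $\langle\scrH\rangle$ generated by all members of $\scrH$, because $\widehat G$ is generated by the images of its vertex groups (any higher intersection in $I_\scrH$ maps compatibly through the pairwise ones, so the colimit is already generated by the images of the members of $\scrH$). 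The first equivalence is then elementary: if $g=h_1\cdots h_m$ with each $h_j$ lying in some $H^{(j)}\in\scrH$, the chain $\Delta,\ h_1\Delta,\ h_1h_2\Delta,\dots,\ g\Delta$ has consecutive terms sharing a vertex (as $h_j\in H^{(j)}$ forces $h_1\cdots h_{j-1}H^{(j)}=h_1\cdots h_jH^{(j)}$), so every vertex of $X$ lies in the component of $\Delta$ and $X$ is connected; conversely an edge-path from a vertex of $\Delta$ to an arbitrary vertex $gH'$, read off as a product of coset representatives, stays inside $\langle\scrH\rangle$, forcing $g\in\langle\scrH\rangle$. Hence $X$ is connected if and only if $\langle\scrH\rangle=G$, i.e.\ if and only if $\pi$ is onto.

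For the simple-connectivity statement, assume $X$ connected and let $p\colon\widetilde X\to X$ be the universal cover. Let $\widehat G'$ be the group of all homeomorphisms of $\widetilde X$ lifting elements of $G$; this is an extension $1\to\pi_1(X)\to\widehat G'\to G\to 1$ with $\pi_1(X)$ the freely acting deck group. Fix a lift $\widehat\Delta\subset\widetilde X$ of $\Delta$. Since $p$ is a local homeomorphism and the deck action is free, the $\widehat G'$-stabilizer of each face of $\widehat\Delta$ maps \emph{isomorphically} onto the $G$-stabilizer of the corresponding face of $\Delta$, and $\widehat\Delta$ is again a strict fundamental domain (because $\widetilde X/\widehat G' = X/G\cong\Delta$). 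Thus $\widetilde X$ is a simply-connected complex on which $\widehat G'$ acts with strict fundamental domain a simplex whose face-stabilizers are canonically the members of $\scrH$ and their intersections, with the canonical inclusions among them.

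The key lemma I would then establish is: \emph{if a group $K$ acts on a simply-connected simplicial complex $Y$ with strict fundamental domain a simplex $\Sigma$, then the natural map from the colimit of the diagram of face-stabilizers of $\Sigma$ to $K$ is an isomorphism.} Surjectivity is the chain-of-simplices argument of the first paragraph. Injectivity is a van Kampen--type argument: an element of the colimit mapping to $1\in K$ is represented by an edge-loop of $Y$ based at the barycentre of $\Sigma$; simple connectivity fills it by a simplicial disk, each elementary homotopy across a $2$-simplex of this disk takes place inside a single $K$-translate of $\Sigma$ and hence realises one of the defining relations of the colimit (an identification coming from an intersection of face-stabilizers), so collapsing the disk reduces the word to the identity in the colimit. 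Applying this to $K=\widehat G'$, $Y=\widetilde X$, $\Sigma=\widehat\Delta$ identifies $\widehat G'$ with $\widehat G=\mathrm{colim}\,F_\scrH$ compatibly with the projection to $G$; by uniqueness of maps out of a colimit that projection is exactly $\pi$, so $\ker\pi\cong\pi_1(X)$. Therefore $\pi$ is injective if and only if $X$ is simply connected, and, combined with the first paragraph, $\pi$ is an isomorphism if and only if $X$ is connected and simply connected.

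The main obstacle is the key lemma: turning the combinatorics of a simplicial filling disk into a formal reduction in the colimit, and in particular keeping track of base points and of the fact that each $2$-cell of the disk is swallowed by a single translate of $\Sigma$, so that only the intersection relations of the diagram (and no spurious relations) get used. The covering-space bookkeeping of the second paragraph --- freeness of the deck action, strictness of the lifted fundamental domain, and the stabilizer isomorphisms upstairs-to-downstairs --- is routine for simplicial (or CW) complexes but must be checked to make the identifications $\widehat G'\cong\widehat G$ and $\ker\pi\cong\pi_1(X)$ legitimate. (An alternative to passing through $\widetilde X$ is to compute the edge-path group of $X$ directly from a spanning tree and the triangles of $X$ and match it termwise with a presentation of $\mathrm{colim}\,F_\scrH$; this is more self-contained but combinatorially heavier.)
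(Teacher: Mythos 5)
The paper does not actually prove this statement---it is imported wholesale from Tits and from Abels--Holz~\cite[Theorem~2.4]{AbelsHolz}---so there is no internal argument to compare yours against. What you write is the standard proof of this classical result (essentially Tits' lemma, or the strictly developable ``simple complexes of groups'' machinery of Bridson--Haefliger, Ch.~II.12), and it is correct in outline. The connectivity equivalence is complete as written. For simple connectivity, the covering-space bookkeeping is sound: lifts of elements of $G$ exist because $\widetilde X$ is simply connected, the lifted simplex is again a strict fundamental domain, and stabilizers upstairs map isomorphically onto stabilizers downstairs because the action is type-preserving (so setwise stabilizers of faces are pointwise stabilizers) and the deck action is free. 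Two points should be made explicit. First, your key lemma concerns the colimit over the poset of \emph{all} face-stabilizers of $\Sigma$, whereas $F_\scrH$ as defined in the paper runs only over the members of $\scrH$ and their \emph{pairwise} intersections; the two colimits do coincide---a cone on the pairwise diagram extends uniquely over the higher intersections, since the maps out of $H_{i}$ and $H_{j}$ already agree on $H_{i}\cap H_{j}$ and hence on any smaller intersection---but without this remark $\pi$ is not literally the map your lemma analyzes. Second, as you yourself flag, all the real content sits in the injectivity half of the key lemma (the van Kampen/filling-disk reduction), which is a genuine theorem rather than a routine verification; so the proposal as it stands is a correct reduction to a known result rather than a self-contained proof, which is a reasonable way to handle a statement the paper itself only cites.
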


To apply Theorem~\ref{AbelsHolzThmCC} in our context we will need a bit of commutator calculus. The following identities are well-known; see e.g.~\cite[Section~2.2]{Abels}. 

\begin{lem} \label{commids} 
Let $G$ be a group and let $a,b,c \in G$. Then
\begin{equation} \label{comutinho}
[ab,c] = a[b,c]a^{-1}[a,c]
\end{equation}
and
\begin{equation} \label{Hall}
 [cac^{-1},[b,c]] \cdot [bcb^{-1},[a,b]] \cdot [aba^{-1},[c,a]] = 1. \tag{Hall's identity}
\end{equation}
\end{lem}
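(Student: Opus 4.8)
The plan is to verify both identities directly: each holds in the free group on $a,b,c$, and hence in every group. I use the convention $[x,y]=xyx^{-1}y^{-1}$ from Section~\bref{prelim} throughout.

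For~\eqref{comutinho} one simply expands both sides as words in $a^{\pm1},b^{\pm1},c^{\pm1}$: the left-hand side is $(ab)c(ab)^{-1}c^{-1}=abcb^{-1}a^{-1}c^{-1}$, whereas the right-hand side $a(bcb^{-1}c^{-1})a^{-1}(aca^{-1}c^{-1})$ collapses to the same word after cancelling $a^{-1}a$ and then $c^{-1}c$.

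For~\eqref{Hall} the efficient bookkeeping device is to set $u:=cac^{-1}$, $v:=aba^{-1}$, $w:=bcb^{-1}$ and to record that $[b,c]=wc^{-1}$, $[a,b]=vb^{-1}$ and $[c,a]=ua^{-1}$. The three factors then read $[u,wc^{-1}]$, $[w,vb^{-1}]$, $[v,ua^{-1}]$; expanding each as $XYX^{-1}Y^{-1}$ and using $c^{-1}u^{-1}c=a^{-1}$, $b^{-1}w^{-1}b=c^{-1}$, $a^{-1}v^{-1}a=b^{-1}$, they reduce to $uwa^{-1}w^{-1}$, $wvc^{-1}v^{-1}$, $vub^{-1}u^{-1}$, i.e. to the three length-ten words
\[
cac^{-1}bcb^{-1}a^{-1}bc^{-1}b^{-1},\qquad bcb^{-1}aba^{-1}c^{-1}ab^{-1}a^{-1},\qquad aba^{-1}cac^{-1}b^{-1}ca^{-1}c^{-1}.
\]
Multiplying these in order, the suffix $bc^{-1}b^{-1}$ of the first cancels the prefix $bcb^{-1}$ of the second; after this the word keeps collapsing from the centre outwards until nothing remains, so the product is $1$. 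This final reduction is the one slightly laborious step, though it is purely mechanical; alternatively one may just cite~\cite[Section~2.2]{Abels}, or deduce~\eqref{Hall} from~\eqref{comutinho} by iterated substitution. The only real obstacle is therefore notational---tracking the cancellations in a word of length about two dozen---rather than conceptual.
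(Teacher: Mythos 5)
Your verification is correct: both computations check out (the reduction of the three Hall factors to $uwa^{-1}w^{-1}$, $wvc^{-1}v^{-1}$, $vub^{-1}u^{-1}$ and the subsequent telescoping cancellation are exactly right under the convention $[x,y]=xyx^{-1}y^{-1}$). The paper itself omits any proof and merely cites Abels for these well-known identities, so your direct expansion in the free group is the natural argument and fills in precisely what was left out.
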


We also need a convenient, well-known presentation for $\mbU_n(R)$. To describe this standard presentation we need some notation. Fix $T \subseteq R$ a generating set, containing the unit $1$, for the underlying additive group $\Addi(R)$ of the base ring $R$. That is, we view $R$ as a quotient of the free abelian group $\bigoplus\limits_{t \in T} \Z t$. 

We fix furthermore $\mc{R} \subseteq \bigoplus_{t \in T} \Z t$ a set of \emph{additive} defining relations of $R$. In other words, $\mc{R}$ is a set of expressions $\set{\sum\limits_{\ell} a_\ell t_\ell \mid a_\ell \in \Z, t_\ell \in T} \subseteq \bigoplus\limits_{t \in T} \Z t$ (where all but finitely many coefficients $a_\ell$ are zero) such that $\Addi(R) \cong \left(\bigoplus\limits_{t \in T} \Z t\right) / \spans{\mc{R}}_\Z$.

For every pair $t,s\in T$ of additive generators, we choose an expression $m(t,s) = m(s,t) \in \bigoplus_{t \in T} \Z t$ such that the image of $m(s,t)$ in $R$ under the given projection $\bigoplus_{t \in T} \Z t \onto R$ equals the products $ts$ and $st$. In case $t = 1$, we take $m(1,s)$ to be $s$ itself, i.e. $m(1,s) = s = m(s,1)$.

\begin{lem} \label{standardpresUn} 
 With the notation above, the group $\mbU_n(R) \leq \mbA_n(R)$ admits a presentation $\mbU_n(R) \cong \spans{\mc{Y} \mid \mc{S}}$ with generating set
\[
\mc{Y} = \set{\eij(t) \mid t\in T, 1 \leq i < j \leq n},
\]
and a set of defining relations $\mc{S}$ given as follows. For all $(i,j)$ with $1 \leq i < j \leq n$ and all pairs $t, s \in T$,
\begin{equation} \label{UnRel.1}
[\ekl{ij}(t),\ekl{kl}(s)]  = \begin{cases} \prod\limits_u \ekl{il}(u)^{a_u}, & \mbox{ if } j=k;\\ 
1, & \mbox{ if } i \neq l, k \neq j, \end{cases}
\end{equation}
where $m(t,s) = \sum\limits_{u} a_u u \in \bigoplus\limits_{t \in T} \Z t$ is the fixed expression $m(t,s)$ as above for the product $ts = st \in R$. \\
For all $(i,j)$ with $1 \leq i < j \leq n$,
\begin{equation} \label{UnRel.2}
\prod\limits_{\ell} \eij(t_{\ell})^{a_\ell} = 1 \mbox{ for each } \sum\limits_{\ell} a_\ell t_{\ell} \in \mc{R}.
\end{equation}
The set $\mc{S}$ is defined as the set of all relations~\eqref{UnRel.1} and~\eqref{UnRel.2} given above.
\end{lem}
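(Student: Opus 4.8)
The plan is to show that the natural homomorphism $\pi\colon\widetilde{U}\onto\mbU_n(R)$ from the abstractly presented group $\widetilde{U}:=\spans{\mc{Y}\mid\mc{S}}$ onto $\mbU_n(R)$ is an \emph{isomorphism}. It is well defined because relations~\bbref{UnRel.1} and~\bbref{UnRel.2} plainly hold among the elementary matrices $\eij(t)\in\mbU_n(R)$ --- for~\bbref{UnRel.2}, $\prod_\ell\eij(t_\ell)^{a_\ell}=\eij\bigl(\sum_\ell a_\ell t_\ell\bigr)=\eij(0)=1$ --- and it is surjective because a unitriangular matrix is a product of elementary matrices (clear its columns from the right). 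I would then induct on $n$. The cases $n\leq 2$ are immediate: for $n=2$ the relations~\bbref{UnRel.1} only force the generators $\ekl{12}(t)$ to commute pairwise, so~\bbref{UnRel.2} identifies $\widetilde{U}$ with $\bigl(\bigoplus_{t\in T}\Z t\bigr)/\spans{\mc{R}}\cong\Addi(R)=\mbU_2(R)$.

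For $n\geq 3$, first note that~\bbref{UnRel.1} gives $[\eij(t),\eij(s)]=1$ for every pair $i<j$, so together with~\bbref{UnRel.2} the assignment $t\mapsto\eij(t)$ extends to a homomorphism $\Addi(R)\to\widetilde{U}$; hence the symbols $\eij(r)$ for arbitrary $r\in R$ make sense in $\widetilde{U}$ and are sent by $\pi$ to the corresponding elementary matrices. Consider the subgroup $C\leq\widetilde{U}$ generated by the ``last column'' $\set{\ekl{in}(t)\mid 1\leq i\leq n-1,\ t\in T}$. A short calculation with~\bbref{UnRel.1} shows that conjugating a generator $\ekl{in}(t)$ by any generator $\ekl{kl}(s)$ of $\widetilde{U}$ only introduces a factor $[\ekl{kl}(s),\ekl{in}(t)]$, which is trivial unless $l=i$ and is otherwise a product of $\ekl{kn}(u)$'s with $k\leq n-1$ --- so it stays in $C$ --- whence $C\nsgp\widetilde{U}$. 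The same relations give $[\ekl{in}(t),\ekl{jn}(s)]=1$ for all $i,j\leq n-1$, so $C$ is abelian and there is a natural epimorphism $p\colon\Addi(R)^{n-1}\onto C$. The crucial point --- the only step that is not pure bookkeeping --- is that $p$ is \emph{injective}, i.e. that the last column of $\widetilde{U}$ does not collapse: this follows from a retraction, namely $\pi|_C\colon C\to\mbU_n(R)$ has image the honest last-column subgroup $N=\set{\prod_{i<n}\ekl{in}(r_i)\mid r_i\in R}\cong\Addi(R)^{n-1}$, and $\pi|_C\circ p$ is the identity of $\Addi(R)^{n-1}$ on generators. Hence $p$ is an isomorphism, $C\cong\Addi(R)^{n-1}$, and $\pi$ restricts to an isomorphism $C\cong N$.

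It then remains to identify the quotient. Deleting the generators $\ekl{in}(t)$ ($i<n$) from the presentation $\spans{\mc{Y}\mid\mc{S}}$ leaves exactly the generators $\eij(t)$ with $j\leq n-1$ and, after dropping the relations that have become trivial or redundant, exactly relations~\bbref{UnRel.1} and~\bbref{UnRel.2} defining the analogous presented group for $\mbU_{n-1}(R)$; since $C$ is the normal closure of the deleted generators, the induction hypothesis yields $\widetilde{U}/C\cong\mbU_{n-1}(R)$, compatibly with the isomorphism $\mbU_n(R)/N\cong\mbU_{n-1}(R)$ obtained by deleting the last row and column. Then $\pi$ is a morphism from the short exact sequence $C\into\widetilde{U}\onto\widetilde{U}/C$ to $N\into\mbU_n(R)\onto\mbU_n(R)/N$ which is an isomorphism on both outer terms, so $\pi$ itself is an isomorphism by the five lemma, proving the lemma. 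I expect the main obstacle to be precisely the injectivity of $p$ (equivalently, that the last column of $\widetilde{U}$ suffers no collapse); everything else --- the normality and commutativity of $C$ via~\bbref{UnRel.1}, and checking which defining relations survive in $\widetilde{U}/C$ --- is routine.
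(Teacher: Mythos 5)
Your argument is correct. Note that the paper does not actually prove this lemma---it explicitly declares it standard and defers to \cite[Section~1.1.2]{YuriThesis}---so there is no in-text proof to compare against; your write-up supplies a self-contained argument along what is essentially the classical route (it is the same inductive scheme one finds for unipotent radicals in the amalgam/building literature the paper cites): peel off the last column, identify it with $\Addi(R)^{n-1}$, and recognize the quotient as the presented group one rank lower. You correctly isolate the one non-formal point, namely that the abelian group $C$ does not collapse, and the retraction $\pi|_C\circ p=\mathrm{id}$ is exactly the right way to see it. Two places deserve one extra line in a careful version. First, the relations~\bbref{UnRel.1} as stated only cover the cases $j=k$ and ($i\neq l$, $k\neq j$); the remaining case $l=i$, needed when you conjugate $\ekl{in}(t)$ by $\ekl{ki}(s)$, is obtained from the stated relation for the ordered pair $\bigl((k,i),(i,n)\bigr)$ by inverting the commutator, so it is a consequence rather than an omission---worth saying explicitly. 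Second, normality of $C$ requires checking conjugation by \emph{inverses} of generators as well; this follows because the correction factor $[\ekl{kl}(s),\ekl{in}(t)]$ is a product of $\ekl{kn}(u)$'s that commute with $\ekl{kl}(s)$ by~\bbref{UnRel.1}, so $\ekl{kl}(s)^{-1}\ekl{in}(t)\ekl{kl}(s)$ again lies in $C$. With those two remarks added, the five-lemma conclusion goes through and the proof is complete.
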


Lemma~\ref{standardpresUn} is far from new, so we omit its proof. The presentation above has been considered many times in the literature, most notably in the case where $R$ is a field and in connection to buildings and amalgams; see e.g.~\cite[Chapter~3]{Steinberg}, \cite{TitsAmalgams}, \cite[Appendix~2]{TitsBN}, \cite{DevillersMuehlherr}, and~\cite[Chapters~7 and~8]{AbraBrown}. In general, such presentation is extracted from the commutator relations~\eqref{Ancommutators} between elementary matrices---recall that, in the Chevalley--Demazure set-up, $\mbU_n$ is a maximal unipotent subscheme (over $\Z$) in type $\tA_{n-1}$. The only difference between the presentation we spelled out and other versions typically occurring elsewhere is that we made the ring structure of $R$ more explicit in the relations occurring in $\mbU_n(R)$. The interested reader is referred e.g. to~\cite[Section~1.1.2]{YuriThesis} for a detailed proof of Lemma~\ref{standardpresUn}.

Using the above results, we shall have the last ingredient for the proof of Theorem~\doubleref{Strebel's}{nao-facil} once we establish the following generalization of a result due to Holz~\cite[Proposition~A.3]{Holz}. 

\begin{pps} \label{Holzs}
For every $n \geq 4$ one has that $\mbU_n(R)\cong\mathrm{colim}~F_{\scrH_u(n,R)}$.
\end{pps}

\begin{proof}
 The idea is to write down a convenient presentation for $\mbU_n(R)$ which shows that it is the desired colimit. To do so, we first spell out canonical presentations for the members of $\scrH_u(n,R)$. For the course of this proof we fix (and follow strictly) the notation of Lemma~\ref{standardpresUn}. In particular, $T \subseteq R$ will denote an arbitrary, but fixed, additive generating set for $(R,+) = \Addi(R)$ containing $1$. As in Lemma~\ref{standardpresUn}, we fix $\mc{R}$ a set of additive defining relations of $\Addi(R)$.
 
 We observe that $\mcU_3(R)$ and $\mcU_4(R)$ are abelian, by the commutator relations~\eqref{Ancommutators}. It is also not hard to see that $\mcU_1(R) \cong \mbU_{n-1}(R) \cong \mcU_2(R)$ by translating the indices of elementary matrices accordingly. Thus, we have the following presentations.
 \begin{align*}
  \mcU_1(R) \cong & \langle \set{\eij(t) \mid t \in T, \, 1 \leq i < j \leq n-1} \mid \mbox{ Relations \eqref{UnRel.1} and \eqref{UnRel.2}} \\ 
  & \mbox{for all } i,j \mbox{ with } 1 \leq i < j \leq n-1, \mbox{ and all } t,s \in T \rangle.
 \end{align*}
  \begin{align*}
  \mcU_2(R) \cong & \langle \set{\eij(t) \mid t \in T, 2 \leq i < j \leq n} \mid \mbox{ Relations \eqref{UnRel.1} and \eqref{UnRel.2}} \\ 
  & \mbox{for all } i,j \mbox{ with } 2 \leq i < j \leq n, \mbox{ and all } t,s \in T \rangle.
 \end{align*}
  \begin{align*}
  \mcU_3(R) \cong & \langle \set{\ekl{12}(t), \ekl{n-1,n}(s) \mid t,s \in T} \mid [\ekl{12}(t), \ekl{n-1,n}(s)] = 1 \mbox{ for all } t,s \in T, \\
  & \mbox{and relations \eqref{UnRel.2} for all } t,s \in T \mbox{ and } (i,j) \in \set{(1,2), (n-1,n)} \rangle.
 \end{align*}
  \begin{align*}
  \mcU_4(R) \cong & \langle \set{\ekl{13}(p), \, \ekl{23}(t), \, \ekl{24}(s) \mid p,t,s \in T} \, \mid \,  [\ekl{ij}(t), \ekl{kl}(s)] = 1 \mbox{ for all } \\
  & t, s \in T \mbox { and } (i,j), (k,l) \in \set{(1,3),(2,3),(2,4)}, \mbox{ and} \\
  & \mbox{relations \eqref{UnRel.2} for all } t,s \in T \mbox{ and } (i,j) \in \set{(1,3),(2,3),(2,4)} \rangle.
 \end{align*}
  The pairwise intersections $\mcU_i(R) \cap \mcU_j(R)$ also admit similar presentations by restricting the generators (and corresponding relations) to the indices occurring in both $\mcU_i(R)$ and $\mcU_j(R)$. For instance, 
  \[
   \mcU_1(R) \cap \mcU_2(R) =
   \left( \begin{smallmatrix}
           1 & 0 & \cdots & \cdots & 0 & 0 \\
           0 & 1 & * & \cdots & * & 0 \\
            \vdots & \ddots & \ddots & \ddots & \vdots & \vdots \\
            &  & 0 & 1 & * & 0 \\
            &  &  & 0 & 1 & 0 \\
           0 & \cdots &  & \cdots & 0 &  1
          \end{smallmatrix} \right)
   \cong \mbU_{n-2}(R),
  \]
  with presentation
   \begin{align*}
  \mcU_1(R) \cap \mcU_2(R) \cong & \langle \set{\eij(t) \mid t \in T, \, 2 \leq i < j \leq n-1} \mid \mbox{ Relations \eqref{UnRel.1} and} \\ 
  & \mbox{\eqref{UnRel.2} for all } i,j \mbox{ with } 2 \leq i < j \leq n-1, \mbox{ and all } t,s \in T \rangle.
 \end{align*}
 
Now consider the group $U_n$ defined as follows. As generating set we take
 \[
  \mc{X}_n = \set{\eij(t), \, \ekl{kn}(s) \, \mid \, 1 \leq i < j \leq n-1, \, \, 2 \leq k \leq n, \mbox{ and } t,s \in T}.
 \]
 The set of defining relations $\mc{S}_n$ is formed as follows. For all $t,s \in T$ and indices $i,j,k,l$ which are \emph{either all} in $\set{1,\ldots,n-1}$ \emph{or all} in $\set{2,\ldots,n}$, consider the relations
 \begin{equation} \label{economicUnRel.1}
[\ekl{ij}(t),\ekl{kl}(s)]  = \begin{cases} \prod\limits_u \ekl{il}(u)^{a_u}, & \mbox{ if } j=k;\\ 
1, & \mbox{ if } i \neq l, k \neq j, \end{cases}
\end{equation}
and
 \begin{equation} \label{economicUnRel.2}
  [\ekl{12}(t),\ekl{n-1,n}(s)] = 1,
 \end{equation}
where $m(t,s) = \sum\limits_{u} a_u u \in \bigoplus\limits_{t \in T} \Z t$ is as in Lemma~\ref{standardpresUn}. 
For all $t,s \in T$ and pairs $i,j$ which are \emph{either all} in $\set{1,\ldots,n-1}$ \emph{or all} in $\set{2,\ldots,n}$, consider additionally the relations 
\begin{equation} \label{economicUnRel.3}
\prod_{\ell=1}^m \eij(t_{\ell})^{a_\ell} = 1 \mbox{ for each } \sum_{\ell=1}^m a_\ell t_{\ell} \in \mc{R},
\end{equation}
where $\mc{R}$ is the fixed set of additive defining relations of $\Addi(R)$ as in Lemma~\ref{standardpresUn}. If $n = 4$ we need also consider the relations
 \begin{equation} \label{economicUnRel.4}
  [\ekl{13}(t),\ekl{24}(s)] = 1
 \end{equation}
 for all pairs $t,s \in T$. We take $\mc{S}_n$ to be the set of all relations \eqref{economicUnRel.1}, \eqref{economicUnRel.2}, and \eqref{economicUnRel.3} (in case $n \geq 5$), and $\mc{S}_4$ is the set of all relations \eqref{economicUnRel.1} through \eqref{economicUnRel.4} above. We then define $U_n$ by means of the presentation
\[
 U_n = \spans{\mc{X}_n \mid \mc{S}_n}.
\]

Reading off the presentations for the $\mcU_i(R)$ and for their pairwise intersections, it follows from von Dyck's theorem that $\mathrm{colim}\, F_{\scrH_u(n,R)}$ is isomorphic to the group $U_n$ above.

Thus, to prove the proposition it suffices to show that $\mbU_n(R)$ is isomorphic to $U_n$. To avoid introducing even more symbols 
we proceed as follows. Recall that $\mbU_n(R)$ admits the presentation $\mbU_n(R) \cong \spans{\mc{Y} \mid \mc{S}}$ given in Lemma~\ref{standardpresUn}. Abusing notation and comparing the presentations $U_n = \spans{\mc{X}_n \mid \mc{S}_n}$ and $\mbU_n(R) \cong \spans{\mc{Y} \mid \mc{S}}$, it suffices to define in $U_n$ the missing generators $\ekl{1n}(t)$ (for $t \in T$) and also show that all the relations from $\mc{S}$ missing from $\mc{S}_n$ do hold in $U_n$. (Inspecting the indices, the missing relations are those involving the commutators $[\ekl{1j}(t),\ekl{kn}(s)]$ for $j = 2, \ldots, n$ and $k = 1, \ldots, n-1$, and $(j,k) \neq (2,n-1)$ and those involving only the new generators $\ekl{1n}(t)$.)

For every $t \in T$, define in $U_n$ the element $\ekl{1n}(t) = [\ekl{12}(t), \ekl{2n}(1)]$. With this new commutator at hand, the proof will be concluded once we show that the following equalities hold in $U_n$.\\
For all $s,t \in T$ and $j,k \in \set{2,\ldots,n-1}$ with $j \neq k$ and $(j,k) \neq (2,n-1)$,
\begin{equation} \label{missingrel1}
 [\ekl{1j}(t),\ekl{kn}(s)] = 1.
\end{equation} 
For all $t \in T$ and $j \in \set{2,\ldots,n-1}$,
\begin{equation} \label{missingrel2}
 [\ekl{1j}(t),\ekl{jn}(1)] = [\ekl{1j}(1),\ekl{jn}(t)] = \ekl{1n}(t).
\end{equation}
For all $t, s \in T$ and $i,j$ with $1 \leq i < j \leq n$,
\begin{equation} \label{missingrel3}
 [\ekl{ij}(t),\ekl{1n}(s)] = 1.
\end{equation}
For all $\sum_{\ell=1}^m a_\ell t_{\ell} \in \mc{R}$,
\begin{equation} \label{missingrel4}
 \prod_{\ell=1}^m \ekl{1n}(t_{\ell})^{a_\ell} = 1.
\end{equation}

{\bf Relation~(\ref{missingrel1}) holds:} If $n = 4$ there is nothing to show, since in this case the only equation to verify is $[\ekl{13}(t),\ekl{24}(s)] = 1$, which holds by~\eqref{economicUnRel.4}. Assume $n \geq 5$. We first observe that
\begin{equation} \label{contasUn1}
 [\ekl{1j}(t), \ekl{n-1,n}(s)] = 1
\end{equation}
for all $t,s \in T$ and $j \in \set{3,\ldots,n-2}$ since
\begin{align*}
 \ekl{1j}(t) \ekl{n-1,n}(s) & \overset{\mbox{\scriptsize\eqref{economicUnRel.1}}}{=} \ekl{12}(t) \ekl{2j}(1) \ekl{12}(t)^{-1} \ekl{2j}(1)^{-1} \ekl{n-1,n}(s) \\
 & \overset{\mbox{\scriptsize\eqref{economicUnRel.1}}}{=} \ekl{12}(t) \ekl{2j}(1) \ekl{12}(t)^{-1} \ekl{n-1,n}(s) \ekl{2j}(1)^{-1} \\
 & \overset{\mbox{\scriptsize\eqref{economicUnRel.2}}}{=} \ekl{12}(t) \ekl{2j}(1) \ekl{n-1,n}(s) \ekl{12}(t)^{-1} \ekl{2j}(1)^{-1} \\
 & \overset{\mbox{\scriptsize\eqref{economicUnRel.1}}}{=} \ekl{12}(t) \ekl{n-1,n}(s) \ekl{2j}(1) \ekl{12}(t)^{-1} \ekl{2j}(1)^{-1} \\
 & \overset{\mbox{\scriptsize\eqref{economicUnRel.2}}}{=} \ekl{n-1,n}(s) \ekl{12}(t) \ekl{2j}(1) \ekl{12}(t)^{-1} \ekl{2j}(1)^{-1} \\
 & \overset{\mbox{\scriptsize\eqref{economicUnRel.1}}}{=} \ekl{n-1,n}(s) \ekl{1j}(t).
\end{align*}
Proceeding similarly, we conclude that
\begin{equation} \label{contasUn2}
 [\ekl{12}(t), \ekl{kn}(s)] = 1
\end{equation}
for all $t, s \in T$ and $k \in \set{3,\ldots,n-2}$. Now suppose $j < k$. Then
\[
 [\ekl{1j}(t),\ekl{kn}(s)] \overset{\mbox{\scriptsize\eqref{economicUnRel.1}}}{=} [\ekl{1j}(t), [\ekl{k,n-1}(s),\ekl{n-1,n}(1)]] = 1
\]
because $\ekl{1j}(t)$ commutes with $\ekl{n-1,n}(1)$, by~\eqref{contasUn1}, and with $\ekl{k,n-1}(s)$, by~\eqref{economicUnRel.1}. Analogously, if $j > k$, then
\[
 [\ekl{1j}(t),\ekl{kn}(s)] \overset{\mbox{\scriptsize\eqref{economicUnRel.1}}}{=} [[\ekl{12}(t),\ekl{2j}(1)],\ekl{kn}(s)] = 1
\]
by~\eqref{economicUnRel.1} and~\eqref{contasUn2}. Thus, the relations~\eqref{missingrel1} hold in $U_n$.\\

{\bf Relation~(\ref{missingrel2}) holds:} To check~\eqref{missingrel2} we need Lemma~\ref{commids}. First,
\[
 [\ekl{12}(t),\ekl{2n}(1)] \overset{\mbox{\scriptsize\eqref{economicUnRel.1}}}{=} [\ekl{12}(t),[\ekl{23}(1),\ekl{3n}(1)]].
\]
Setting $a = \ekl{12}(t)$, $b = \ekl{23}(1)$, and $c = \ekl{3n}(1)$, \ref{Hall} yields
\begin{align*}
 1 & = [cac^{-1},[b,c]] \cdot [bcb^{-1},[a,b]] \cdot [aba^{-1},[c,a]] \\
  & \overset{\mbox{\scriptsize\eqref{economicUnRel.1}}}{=} [\ekl{12}(t),\ekl{2n}(1)] \cdot [\ekl{23}(1) \ekl{3n}(1) \ekl{23}(1)^{-1}, \ekl{13}(t)] \\
  & \overset{\mbox{\scriptsize\eqref{economicUnRel.1}}}{=} [\ekl{12}(t),\ekl{2n}(1)] \cdot [\ekl{2n}(1) \ekl{3n}(1), \ekl{13}(t)] \\
  & \overset{\mbox{\scriptsize\eqref{comutinho}}}{=} [\ekl{12}(t),\ekl{2n}(1)] \cdot \ekl{2n}(1) \cdot [\ekl{3n}(1),\ekl{13}(t)] \cdot \ekl{2n}(1)^{-1} \cdot [\ekl{2n}(1),\ekl{13}(t)] \\
  & \overset{\mbox{\scriptsize\eqref{economicUnRel.1}\&\eqref{missingrel1}}}{=} [\ekl{12}(t),\ekl{2n}(1)] \cdot [\ekl{3n}(1),\ekl{13}(t)],
\end{align*}
that is, $\ekl{1n}(t) = [\ekl{13}(t),\ekl{3n}(1)]$. On the other hand,
\[
 [\ekl{12}(1),\ekl{2n}(t)] \overset{\mbox{\scriptsize\eqref{economicUnRel.1}}}{=} [\ekl{12}(1),[\ekl{23}(t),\ekl{3n}(1)]].
\]
Setting $a = \ekl{12}(1)$, $b = \ekl{23}(t)$, and $c = \ekl{3n}(1)$, \ref{Hall} and~\eqref{economicUnRel.1} yield
\begin{align*}
 1 & {=} [\ekl{12}(1),\ekl{2n}(t)] \cdot [\ekl{23}(t) \ekl{3n}(1) \ekl{23}(1)^{-1}, \ekl{13}(t)] \\
  & \overset{\mbox{\scriptsize\eqref{economicUnRel.1}}}{=} [\ekl{12}(1),\ekl{2n}(t)] \cdot [\ekl{2n}(t) \ekl{3n}(1), \ekl{13}(t)] \\
  & \overset{\mbox{\scriptsize\eqref{comutinho}}}{=} [\ekl{12}(1),\ekl{2n}(t)] \cdot \ekl{2n}(t) \cdot [\ekl{3n}(1),\ekl{13}(t)] \cdot \ekl{2n}(t)^{-1} \cdot [\ekl{2n}(t),\ekl{13}(t)] \\
  & \overset{\mbox{\scriptsize\eqref{economicUnRel.1}\&\eqref{missingrel1}}}{=} [\ekl{12}(1),\ekl{2n}(t)] \cdot [\ekl{3n}(1),\ekl{13}(t)].
\end{align*}
The last product above equals $[\ekl{12}(1),\ekl{2n}(t)] \ekl{1n}(t)^{-1}$ by the previous computations. We have thus proved that
\[
\ekl{1n}(t) \overset{\mbox{\scriptsize {Def.}}}{=} [\ekl{12}(t),\ekl{2n}(1)] = [\ekl{12}(1),\ekl{2n}(t)] = [\ekl{13}(t),\ekl{3n}(1)].
\]
Since $[\ekl{12}(1),\ekl{2n}(t)]$ also equals  $[\ekl{12}(1),[\ekl{23}(1),\ekl{3n}(t)]]$, again by~\eqref{economicUnRel.1}, computations similar to the above also yield $[\ekl{13}(t),\ekl{3n}(1)] = [\ekl{13}(1),\ekl{3n}(t)]$. Entirely analogous arguments show that
\[
[\ekl{1j}(t),\ekl{jn}(1)] = [\ekl{1j}(1),\ekl{jn}(t)] = \ekl{1n}(t)
\]
for all $j \in \set{2, \ldots, n-1}$. \\

{\bf Relations~(\ref{missingrel3}) hold:} We now prove that the subgroup $Z :=~\spans{\,\set{\ekl{1n}(t) \mid t \in T}\,} \leq U_n$ is central. Let $t, s \in T$ and let $i,j$ be such that $1 \leq i < j \leq n$. We want to show that $\ekl{1n}(t)$ and $\eij(s)$ commute in $U_n$. To begin with,
\[
[\ekl{1n}(t),\ekl{1n}(s)] \overset{\mbox{\scriptsize\eqref{missingrel2}}}{=} [[\ekl{12}(t),\ekl{2n}(1)],[\ekl{13}(s),\ekl{3n}(1)]] \overset{\mbox{\scriptsize\eqref{economicUnRel.1}}}{=} 1,
\]
 i.e. $Z$ is abelian. If $i = 1$ and $j \neq n$, then $j \geq 2$ and we can pick $k \in~\set{2, \ldots, n-1}$ such that $k \neq j$ because $n \geq 4$, yielding
\begin{align*}
\ekl{1n}(t) \ekl{1j}(s) \overset{\mbox{\scriptsize\eqref{missingrel2}}}{=} [\ekl{1k}(t), \ekl{kn}(1) ] \ekl{1j}(s) & \overset{\mbox{\scriptsize\eqref{missingrel1}\&\eqref{economicUnRel.1}}}{=} \ekl{1j}(s) [\ekl{1k}(t), \ekl{kn}(1)] \\
 & \overset{\mbox{\scriptsize\phantom{111}\eqref{missingrel2}\phantom{x11}}}{=} \ekl{1j}(s) \ekl{1n}(t).
\end{align*}
Similarly, if $j = n$ and $i \neq 1$, choose $k \in \set{2, \ldots, n-1}$ such that $k \neq i$. We obtain
\[
[\ekl{1n}(t),\ekl{in}(s)] \overset{\mbox{\scriptsize\eqref{missingrel2}}}{=} [[\ekl{1k}(t),\ekl{kn}(1)],\ekl{in}(s)] \overset{\mbox{\scriptsize\eqref{economicUnRel.1}\&\eqref{missingrel1}}}{=} 1.
\]
It remains to prove $[\ekl{1n}(t), \eij(s)] = 1$ for $1 < i < j < n$. In this case, 
\begin{align*}
\ekl{1n}(t) \eij(s) & \overset{\mbox{\scriptsize\phantom{x11}\eqref{missingrel2}\phantom{x11}}}{=} \ekl{1i}(1) \ekl{in}(t) \ekl{1i}(1)^{-1} \ekl{in}(t)^{-1} \eij(s) \\
& \overset{\mbox{\scriptsize\phantom{x11}\eqref{economicUnRel.1}\phantom{x11}}}{=} \ekl{1i}(1) \ekl{in}(t) \ekl{1i}(1)^{-1}  \eij(s) \ekl{in}(t)^{-1} \\
& \overset{\mbox{\scriptsize\eqref{economicUnRel.3}\&\eqref{economicUnRel.1}}}{=} \ekl{1i}(1) \ekl{in}(t) \ekl{1j}(s)^{-1} \eij(s) \ekl{1i}(1)^{-1} \ekl{in}(t)^{-1} \\
& \overset{\mbox{\scriptsize\eqref{missingrel1}\&\eqref{economicUnRel.1}}}{=} \ekl{1j}(s)^{-1} \ekl{1i}(1) \ekl{in}(t) \eij(s) \ekl{1i}(1)^{-1} \ekl{in}(t)^{-1} \\
& \overset{\mbox{\scriptsize\phantom{x11}\eqref{economicUnRel.1}\phantom{x11}}}{=} \ekl{1j}(s)^{-1} \ekl{1i}(1) \eij(s) \ekl{in}(t) \ekl{1i}(1)^{-1} \ekl{in}(t)^{-1} \\
& \overset{\mbox{\scriptsize\eqref{economicUnRel.3}\&\eqref{economicUnRel.1}}}{=} \ekl{1j}(s)^{-1} \ekl{1j}(s) \eij(s) \ekl{1i}(1) \ekl{in}(t) \ekl{1i}(1)^{-1} \ekl{in}(t)^{-1} \\
& \overset{\mbox{\scriptsize\phantom{x11}\eqref{missingrel2}\phantom{x11}}}{=} \eij(s) \ekl{1n}(t).
\end{align*}
Thus, relations~\eqref{missingrel3} hold for all $t, s \in T$ and $i,j$ with $1 \leq i < j \leq n$.\\

{\bf Relations~(\ref{missingrel4}) hold:} Given any pair $t,s \in T$, 
\begin{align} 
\nonumber [\ekl{12}(t) \ekl{12}(s), \ekl{2n}(1)] & \overset{\mbox{\scriptsize\phantom{x11}\eqref{comutinho}\phantom{x11}}}{=} \ekl{12}(t)[\ekl{12}(s), \ekl{2n}(1)] \ekl{12}(t)^{-1} [\ekl{12}(t), \ekl{2n}(1)] \\
\nonumber & \overset{\mbox{\scriptsize\eqref{missingrel2}\&\eqref{missingrel3}}}{=} [\ekl{12}(s), \ekl{2n}(1)] [\ekl{12}(t), \ekl{2n}(1)] \\
 \label{ultimoproduto} & \overset{\mbox{\scriptsize\eqref{missingrel2}\&\eqref{missingrel3}}}{=} [\ekl{12}(t), \ekl{2n}(1)] [\ekl{12}(s), \ekl{2n}(1)].
\end{align}
Now let $\sum\limits_{\ell = 1}^m a_\ell t_\ell \in \mc{R}$ be an additive defining relation in $R$. (Recall that $t_\ell \in T$ and $a_\ell \in \Z$ as in Lemma~\ref{standardpresUn}.) Induction on $\sum\limits_{\ell = 1}^{m} |a_\ell|$ and~\eqref{ultimoproduto} yield 
\[
\prod\limits_{\ell = 1}^{m} \ekl{1n}(t_\ell)^{a_\ell} \overset{\mbox{\scriptsize\mbox{Def.}}}{=} \prod\limits_{\ell = 1}^{m} \left([\ekl{12}(t_\ell), \ekl{2n}(1)]\right)^{a_\ell} \overset{\mbox{\scriptsize\eqref{ultimoproduto}}}{=} \left[ \prod\limits_{\ell = 1}^{m} \ekl{12}(t_\ell)^{a_\ell}, \ekl{2n}(1) \right] \overset{\mbox{\scriptsize\eqref{economicUnRel.1}}}{=} 1. 
\]

Since the relations \eqref{missingrel1} -- \eqref{missingrel4} missing from the presentation for $\mbU_n(R)$ from Lemma~\ref{standardpresUn} also hold in $U_n$, it follows that $U_n  \cong \mbU_n(R)$, as claimed.
\end{proof}

\subsection{Proof of Theorem~\ref{Strebel's}} \label{provaAbels}

If $\phi(\mbA_n(R)) > 0$ for some $n \geq 3$, then $\mbA_n(R)$ and its retract $\Aff(R) \cong { \left( \begin{smallmatrix} * & * \\ 0 & 1 \end{smallmatrix} \right) } \leq \GL_2(R)$ are finitely generated. Thus the abelian group $\Mult(R)$ is finitely generated and $\Addi(R)$ is finitely generated as a $\Z[\Mult(R)]$-module, which shows that $R$ is finitely generated as a ring. This deals with the very first claim of Theorem~\ref{Strebel's} (except possibly when $n = 2$). 

Now, if $\phi(\mbA_2(R)) > 0$, then $\mbA_2(R) \cong \Addi(R)$ is finitely generated as a $\Z$-module. This implies, for every $n \geq 2$, that the unipotent radical $\mbU_n(R)$ of $\mbA_n(R)$ is a finitely generated nilpotent group and thus has $\phi(\mbU_n(R)) = \infty$. (The equality follows e.g. from Lemma~\ref{obviousboundsonphi} by induction on the nilpotency class because all terms of the lower central series of a finitely generated nilpotent group are themselves finitely generated.) 
Moreover, $\mbA_2(R) \cong \Addi(R)$ being finitely generated also implies that the group of units $\Mult(R)$ is finitely generated by Samuel's generalization of Dirichlet's unit theorem~\cite[Section~4.7]{Samuel}. Thus $\mb{D}_n(R)\leq\GL_n(R)$ and $\mb{T}_n(R) \leq \mbA_n(R)$ also have $\phi(\mb{T}_n(R)) = \phi(\mb{D}_n(R)) = \infty$. Since $\mbA_n(R) = \mbU_n(R) \rtimes \mb{T}_n(R)$ and $\mbB_2(R) = \mbU_2(R) \rtimes \mb{D}_2(R)$, it follows from Lemmata~\ref{obviousboundsonphi} and~\ref{BorelGLSL} that $\phi(\mbA_n(R)) = \phi(\BzeroR) = \infty$.
 
 Assume from now on that $R$ is \emph{not} finitely generated as a $\Z$-module. For $n \geq 3$, we first observe that $\phi(\mbA_n(R)) \geq 1$ whenever $\phi(\BzeroR) \geq 1$. Indeed, assume the latter, i.e. $\BzeroR$ is finitely generated. Then, for every $i \in \set{2, \ldots, n-2}$, the subgroups 
 \[
  \mb{E}_{i,i+1}(R) \rtimes (D_i(R) \times D_{i+1}(R)) \cong \mbB_2(R),
 \]
 \[
  \mb{E}_{12}(R) \rtimes D_2(R) \cong \Aff_-(R) \cong {\left( \begin{smallmatrix} 1 & * \\ 0 & * \end{smallmatrix} \right) } \leq \GL_2(R), \, \mbox{ and}
 \]
 \[
  \mb{E}_{n-1,n}(R) \rtimes D_{n-1}(R) \cong \Aff(R) \cong {\left( \begin{smallmatrix} * & * \\ 0 & 1 \end{smallmatrix} \right) } \leq \GL_2(R)
 \]
are also finitely generated by Lemma~\ref{newlemma}. By relations~\eqref{Ancommutators} and~\eqref{SteinbergRelGLn}, the subgroups above generate all of $\mbA_n(R)$, whence $\phi(\mbA_n(R)) \geq 1$. Secondly, it is straightforward that $\mbA_n(R)$---still for $n \geq 3$---retracts e.g. onto $\Aff(R)$, which implies $\phi(\mbA_n(R)) \leq \phi(\BzeroR)$ by Theorem~\ref{apendice}.

From the observations above, the proof of part~\eqref{facil2} will be concluded once we show that $\mbA_3(R)$ can never be finitely presented, i.e. $\phi(\mbA_3(R)) \leq 1$. To see this, in case $\Mult(R)$ is itself finite or is \emph{infinitely} generated, then $\phi(\mbA_3(R)) = \phi(\mbU_3(R)) = 0$. In case $\Mult(R)$ has torsion-free rank at least one and if $\mbA_3(R)$ were finitely presented, then its metabelian quotient $\mbA_3(R)/Z(\mbA_3(R)) = \mbA_3(R) / \mb{E}_{13}(R)$ would also be finitely presented by~\cite[Corollary~5.6]{BieriStrebel}. But the complement of the $\Sigma$-invariant~\cite{BieriStrebel} of the $\Z[\mb{T}_1(R)]$-module $\mbU_3(R) / \mb{E}_{13}(R) \cong \mb{E}_{12}(R) \oplus \mb{E}_{23}(R)$ is readily seen to contain antipodal points, which for us means that $\mbU_3(R) / \mb{E}_{13}(R)$ is not tame as a $\Z[\mb{T}_1(R)]$-module. This would contradict~\cite[Theorem~5.1]{BieriStrebel} and we are done with part~\eqref{facil2}.
 
Turning to part~\eqref{nao-facil} after the previous remarks, it remains to check for $n\geq 4$ that $\phi(\mbA_n(R)) \geq 2$ when $\phi(\BzeroR) \geq 2$. Suppose the latter holds, i.e. $\BzeroR$ is finitely presented. By Lemma~\ref{acaonub}, the group $\mbA_n(R)$ for $n \geq 4$ acts cocompactly and by cell-permuting homeomorphisms on the simplicial complex $CC(\scrH(n,R))$. Since $\phi(\BzeroR) \geq 2$, we know from Lemma~\ref{acaonub} and Proposition~\ref{stabilizersareF2} that the stabilizer in $\mbA_n(R)$ of any cell of $CC(\scrH(n,R))$ is finitely presented. Since $CC(\scrH(n,R))$ (for $n \geq 4$) is connected and simply-connected by Proposition~\ref{Holzs} and Theorem~\ref{AbelsHolzThmCC}, it follows from Theorem~\ref{Brownzinho} that $\mbA_n(R)$ is finitely presented. That is, $\phi(\mbA_n(R)) \geq 2$, as required. \qed

\subsubsection{Remarks on the proof of Theorem~\ref{Strebel's}} \label{lombra}

The author was unable to prove purely geometrically that $CC(\scrH(n,R))$ is simply-connected. The argument given here, whose main technical ingredient is Proposition~\ref{Holzs}, is the only step in the proof of Theorem~\ref{Strebel's} whose methods are similar to those of Strebel's in~\cite{StrebelAbels}. Altogether, there are two key differences between our techniques. 

Assuming $\left\{ \left(\begin{smallmatrix} \diamond & * \\ 0 & 1 \end{smallmatrix}\right) \mid * \in R, \diamond \in Q \right\} \leq \GL_2(R)$ to be finitely presented (and fixing such a presentation), Strebel gives \emph{concrete} generators and relations for his groups $A_n(R,Q)$ for $n \geq 4$. (Recall that $A_n(R,R^\times) = \mbA_n(R)$.) Presentations of $\mbA_n(R)$ using our methods can be extracted using~\cite[Theorem~1]{Brown0}. Alternatively, one can combine the presentation for $\mbU_n(R)$ from Proposition~\ref{Holzs} with a presentation of the torus $\mbT_n(R)$ to construct a presentation for $\mbA_n(R) = \mbU_n(R) \rtimes \mbT_n(R)$. Such presentations, however, are somewhat cumbersome. Thus on the one hand, Strebel's proof has an advantage in that his sets of generators and relations are cleaner.

On the other hand, our proof of Proposition~\ref{Holzs} drawing from Holz's ideas~\cite[Anhang]{Holz} is advantageous in that it suggests a $K$-theoretical phenomenon behind finiteness properties of Abels' groups. It is well-known that classical non-exceptional groups are finitely generated (resp. finitely presented) whenever their ranks are large enough or the base ring has good $K_1$- and $K_2$-groups; cf.~\cite{HahnO'Meara}. For instance, a large rank $n$ gives one enough space in $\GL_n(R)$ to work with elementary matrices via commutator calculus and thus deduce many relations from the standard ones. The same happens with $\mbA_n(R)$---the hypothesis $n \geq 4$ is necessary for positive results, but Holz observes further that one can spare some generators (and some relations) for $\mbA_n(\Z[1/p])$ in the case $n \geq 5$ in comparison to $\mbA_4(\Z[1/p])$. This observation is incorporated in our generalization and is the reason why $CC(\scrH(n,R))$ is $3$-dimensional for $n = 4$ but merely $2$-dimensional for $n \geq 5$. 

\subsection{Conjecture~\ref{EndlichkeitseigenschaftenAbels} in the arithmetic set-up} \label{ExemploConjectura}

We close the paper by spelling out a proof of the following special case of Conjecture~\ref{EndlichkeitseigenschaftenAbels}. Though it has not appeared in this general form in the literature before, we claim no originality---it is a simple combination of famous results mentioned in the introduction.

\begin{pps}
  Let $\OS$ be a Dedekind domain of arithmetic type. If either $\carac(\OS) = 0$ or if $\carac(\OS) > 0$ and $S$ has at most three elements, then the $S$-arithmetic Abels groups $\mbA_n(\OS)$ satisfy Conjecture~\ref{EndlichkeitseigenschaftenAbels}. That is,
  \[
  \phi(\mbA_n(\OS)) = \min\set{n-2, \phi(\Bzero(\OS))} \mbox{ for all } n \geq 2.
 \]
\end{pps}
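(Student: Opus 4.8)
The plan is to split along the dichotomy in the statement---positive versus zero characteristic---after first reducing to $n\geq 4$. As in Conjecture~\bref{EndlichkeitseigenschaftenAbels}, we may assume $\OS$ is infinitely generated as a $\Z$-module (this is automatic in positive characteristic, and in the number field case it just means that $S$ contains a non-archimedean place; when $\OS$ is finitely generated as a $\Z$-module the Conjecture makes no claim). For $n=2$, Theorem~\doublebref{Strebel's}{facil1} gives $\phi(\mbA_2(\OS))=0=\min\set{0,\phi(\Bzero(\OS))}$, and for $n=3$, Theorem~\doublebref{Strebel's}{facil2} gives $\phi(\mbA_3(\OS))=\min\set{1,\phi(\Bzero(\OS))}=\min\set{n-2,\phi(\Bzero(\OS))}$ directly. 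So from now on $n\geq 4$, hence $n-2\geq 2$.

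For positive characteristic I would show that Theorems~\bref{apendice} and~\bref{Strebel's} together with Bux's equality already do the job. By Theorem~\bref{Bux's} (or directly~\cite{Bux04}), $\phi(\Bzero(\OS))=\lvert S\rvert-1$, and since $\lvert S\rvert\leq 3$ this lies in $\set{0,1,2}$; in particular $\phi(\Bzero(\OS))\leq 2\leq n-2$, so $\min\set{n-2,\phi(\Bzero(\OS))}=\phi(\Bzero(\OS))$. Now Theorem~\doublebref{Strebel's}{nao-facil} supplies $\phi(\mbA_n(\OS))\leq\phi(\Bzero(\OS))$ and, for each $\ell\in\set{1,2}$, the reverse inequality $\phi(\mbA_n(\OS))\geq\ell$ whenever $\phi(\Bzero(\OS))\geq\ell$. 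Feeding in $\phi(\Bzero(\OS))\in\set{0,1,2}$---and the trivial bound $\phi\geq 0$ in the case $\lvert S\rvert=1$---yields $\phi(\mbA_n(\OS))=\phi(\Bzero(\OS))=\min\set{n-2,\phi(\Bzero(\OS))}$. This is exactly the point at which $\lvert S\rvert\leq 3$ matters: it is what keeps $\phi(\Bzero(\OS))$ within the range $\set{1,2}$ covered by Theorem~\bref{Strebel's}.

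For characteristic zero the ingredients are different. Here $\Bzero(\OS)$ is an $S$-arithmetic subgroup of the soluble algebraic group $\Bzero\leq\SL_2$ (a Borel subgroup), hence of type $\Fn{\infty}$ by Tiemeyer's theorem~\cite[Theorems~4.3 and~4.4]{Tiemeyer}; thus $\phi(\Bzero(\OS))=\infty$ and $\min\set{n-2,\phi(\Bzero(\OS))}=n-2$, so one must establish $\phi(\mbA_n(\OS))=n-2$. For the upper bound $\phi(\mbA_n(\OS))\leq n-2$ I would invoke Bieri's argument~\cite{BieriZHomology} (see also~\cite{Abels}): it uses only that the centre $Z(\mbA_n(\OS))=\mb{E}_{1n}(\OS)\cong\Addi(\OS)$ is infinitely generated, and so applies to any base ring infinitely generated as a $\Z$-module. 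For the lower bound $\phi(\mbA_n(\OS))\geq n-2$ I would appeal to the theorem of Abels and Brown~\cite{AbelsBrown}: over $\Z[1/p]$ they realise $\mbA_n$ inside $\GL_n$ and run K.~S.~Brown's criterion along a horospherical filtration of a contractible complex assembled from the Bruhat--Tits building of $\SL_n$ over the relevant local field, the connectivity input for the descending links being an instance of the Solomon--Tits theorem; the same argument, carried out simultaneously over the completions of the number field at the places of $S$, gives type $\Fn{n-2}$ for an arbitrary ring of $S$-integers (compare~\cite{StefanAbels}). Combining the two bounds finishes the characteristic zero case, and with it the proposition.

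The main obstacle is the last point: checking that the Abels--Brown lower bound---recorded in the literature only for $\Z[1/p]$---genuinely survives the passage to an arbitrary ring of $S$-integers of a number field. Everything else is a direct combination of Theorems~\bref{apendice}, \bref{Strebel's} and~\bref{Bux's} with Bux's equality and Tiemeyer's theorem.
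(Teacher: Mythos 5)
Your treatment of $n\in\set{2,3}$ and of the positive-characteristic case is essentially the paper's: feed $\phi(\Bzero(\OS))=\vert S\vert-1\in\set{0,1,2}$ from Theorem~\bref{Bux's} into the three parts of Theorem~\bref{Strebel's}, and observe that $\vert S\vert\leq 3$ is exactly what keeps the value inside the range $\ell\in\set{1,2}$ covered by part~(\ref{nao-facil}). The problem is the characteristic-zero lower bound, and you have named it yourself: the Abels--Brown theorem is recorded only for $\Z[1/p]$, and your proposal to ``carry out the same argument simultaneously over the completions at the places of $S$'' is the sketch of a new proof rather than a citation. As written, the proposition remains unproved for, say, $\OS=\Z[\sqrt{2},1/3]$, so this is a genuine gap, not a presentational one.

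The paper closes that gap not by redoing Abels--Brown but by reducing to the case where it is already available, \emph{before} either bound is addressed. First, the Kneser--Tiemeyer local-global principle~\cite[Theorem~3.1]{Tiemeyer} shows that in characteristic zero $\phi(\mbA_n(\OS))$ is insensitive to the cardinality of $S$, so one may assume $S$ contains a single non-archimedean place. Second, restriction of scalars~\cite[Lemma~3.1.4]{Margulis} reduces to $\Frac(\OS)=\Q$, i.e.\ to $\OS=\Z[1/p]$. At that point \cite[Theorem~B]{AbelsBrown} together with Brown's criterion gives $\phi\geq n-2$ verbatim, and Bieri's centre argument gives $\phi<n-1$. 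A secondary caveat about your upper bound: the step ``type $\FPn{n-1}$ implies type $\FPn{\infty}$'' from~\cite{BieriZHomology} is a structural statement about these soluble groups, not a formal consequence of the centre being infinitely generated, so asserting that Bieri's argument ``applies to any base ring infinitely generated as a $\Z$-module'' also needs justification---but after the two reductions above this becomes moot, since everything is checked over $\Z[1/p]$. If you intend to keep your route, you must either prove the Abels--Brown bound over general rings of $S$-integers (in the spirit of~\cite{StefanAbels}) or insert these two reductions; the latter is the short way out.
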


\begin{proof}
 If $\carac(\OS) = 0$, the Kneser--Tiemeyer local-global principle~\cite[Theorem.~3.1]{Tiemeyer} allows us to assume that $S$ contains a single non-archimedean place. Also, the equality $\phi(\Bzero(\OS)) = \infty$ holds by~\cite[Corollary~4.5]{Tiemeyer}. By restriction of scalars (see e.g.~\cite[Lemma~3.1.4]{Margulis}), it suffices to consider the case where $\Frac(\OS) = \Q$. In this set-up, $\OS$ is of the form $\OS = \Z[1/p]$ for some prime number $p \in \N$. Here $\mbA_n(\Z[1/p])$ has $\phi(\mbA_n(\Z[1/p])) < n-1$, for otherwise it would be of homological type $\FPn{n-1}$ and thus of type $\FPn{\infty}$ by~\cite[Proposition]{BieriZHomology}. In particular, its center $Z(\mbA_n(\Z[1/p]))$ would be finitely generated by~\cite[Corollary~2]{BieriZHomology}. However, $Z(\mbA_n(\Z[1/p]))$ is the elementary subgroup $\mb{E}_{1n}(\Z[1/p]) \cong \Addi(\Z[1/p])$, which is not finitely generated, yielding a contradiction. Since $\phi(\mbA_n(\Z[1/p])) \geq n-2$ by~\cite[Theorem~B]{AbelsBrown} and Brown's criteria (Theorem~\ref{Brownzinho} and~\cite[Proposition~1.1]{Brown}), we obtain
\[
 \phi(\mbA_n(\OS)) = n - 2 = \min\set{n-2, \infty} = \min\set{n-2, \phi(\Bzero(\OS))}.
\]
In case $\carac(\OS) > 0$, we have $\phi(\Bzero(\OS)) = |S|-1$ by Theorem~\ref{Bux's}. Thus, if $S$ has at most three elements, it follows from Theorem~\ref{Strebel's} that
\[
 \phi(\mbA_n(\OS)) = \min\set{0, 1, 2, n-2} = \min\set{n-2, \phi(\Bzero(\OS))}.
\]
\end{proof}

\subsection*{Acknowledgments} 
{\small 
Part of this work grew out of interesting discussions with Herbert~Abels, Stephan~Holz, Benjamin~Br\"uck, and Alastair~Litterick, whom I thank most sincerely. I am indebted to Ralph~Strebel for the mathematical correspondences and for sharing with me his manuscripts, and to my Ph.D. advisor, Kai-Uwe~Bux, for his guidance. The author also thanks the anonymous referees for valuable suggestions.
}

\printbibliography
 
 \end{document}